\providecommand{\pgfsyspdfmark}[3]{}
\newtheorem{thm}{Theorem}[section]
\newtheorem{lem}[thm]{Lemma}
\newtheorem{cor}[thm]{Corollary}
\newtheorem*{thm*}{Theorem}
\newtheorem*{cor*}{Corollary}
\newtheorem*{prop*}{Proposition}
\theoremstyle{definition}
\newtheorem{defn}[thm]{Definition}
\theoremstyle{remark}
\newtheorem{remark}[thm]{Remark}
\newtheorem{quest}[thm]{Question}
\numberwithin{equation}{section}
\newcommand{\be}{\begin{equation}}
\newcommand{\ee}{\end{equation}}
\def\ba{\begin{eqnarray*}}
\def\ea{\end{eqnarray*}}
\newcommand{\bi}{\begin{itemize}}
\newcommand{\ei}{\end{itemize}}
\newcommand{\bn}{\begin{enumerate}}
\newcommand{\en}{\end{enumerate}}
\newcommand{\bbm}{\begin{bmatrix}}
\newcommand{\ebm}{\end{bmatrix}}
\newcommand{\bpm}{\begin{pmatrix}}
\newcommand{\epm}{\end{pmatrix}}
\newcommand{\bsm}{\left ( \begin{smallmatrix}}
\newcommand{\esm}{\end{smallmatrix} \right) }
\newcommand{\mr}{\ensuremath{\mathrm}}
\newcommand{\scr}{\ensuremath{\mathscr}}
\newcommand{\ov}{\ensuremath{\overline}}
\newcommand{\sm}{\ensuremath{\setminus}}
\newcommand{\wt}{\ensuremath{\widetilde}}
\newcommand{\Om}{\ensuremath{\Omega}}
\newcommand{\La}{\ensuremath{\Lambda }}
\newcommand{\om}{\ensuremath{\omega}}
\newcommand{\eps}{\ensuremath{\epsilon }}
\def\C{\mathbb{C}}
\def\N{\mathbb{N}}
\def\B{\mathbb{B}}
\def\fr{\mathfrak{r}}
\newcommand{\cO}{\mathcal{O}}
\newcommand{\cA}{\mathcal{A}}
\newcommand{\bW}{\mathbb{W}}
\newcommand{\cE}{\mathcal{E}}
\newcommand{\fps}{\C\langle \langle z \rangle \rangle}
\def\word{\mathbb{W} _d}
\def\fz{\mathfrak{z}}
\def\bH{\mathbb{H}}
\def\A{\mathbb{A} _d}
\def\fp{\mathbb{C} \langle \fz \rangle }
\def\fpd{\mathbb{C} \langle \fz _1, \cdots, \fz _d \rangle }
\def\fps{\mathbb{C} \langle \! \langle  \fz  \rangle \! \rangle}
\def\fpsd{\mathbb{C} \langle \! \langle  \fz _1, \cdots, \fz _d  \rangle \! \rangle}
\def\mrt{\mathrm{t}}
\def\hardy{\mathbb{H} ^2 _d}
\def\mult{\mathbb{H} ^\infty _d}
\newcommand{\ip}[2]{\ensuremath{\langle {#1} , {#2} \rangle}}
\def\nbdom{\mr{Dom} \, }
\def\nbran{\mr{Ran} \, }
\def\nbker{\mr{Ker} \, }
\def\nbdim{\mr{dim} \, }
\def\fskew{\C \ \mathclap{\, <}{\left( \right.}   \fz  \mathclap{  \, \, \, \, \, >}{\left. \right)} \, \, }
\def\skewfps{\C \ \mathclap{\, <}{\left( \right.} \!  \mathclap{\, <}{\left( \right.}  \fz  \mathclap{  \, \, \, \, \, >}{\left. \right)} \! \mathclap{  \, \, \, \, \, >}{\left. \right)} \, \,}
\def\fskewm{\C   \ \mathclap{\, <}{\left( \right.}   \fz _1, \cdots, \fz _m  \mathclap{  \, \, \, \, \, >}{\left. \right)} \, \, }
\def\fskewk{\C   \ \mathclap{\, <}{\left( \right.}   \fz _1, \cdots, \fz _k  \mathclap{  \, \, \, \, \, >}{\left. \right)} \, \, }
\def\fskewd{\C   \ \mathclap{\, <}{\left( \right.}   \fz _1, \cdots, \fz _d  \mathclap{  \, \, \, \, \, >}{\left. \right)} \, \, }
\def\ratfps{\C _0 \ \mathclap{\, <}{\left( \right.}  \fz  \mathclap{ \, \, \, \, \, >}{\left. \right)} \, \, }
\def\ratfpsm{\C _0  \ \mathclap{\, <}{\left( \right.}  \fz _1, \cdots, \fz _m  \mathclap{ \, \, \, \, \, >}{\left. \right)} \, \, }
\def\bdn{\mathbb{B} ^{(n\times n)\cdot d}}
\def\bdm{\mathbb{B} ^{(m\times m)\cdot d}}
\def\cdm{\mathbb{C} ^{(m\times m)\cdot d}}
\def\cdn{\mathbb{C} ^{(n\times n)\cdot d}}
\def\cdmr{\mathbb{C} ^{(m\times m)\times d}}
\def\cH{\mathcal{H}}
\def\cJ{\mathcal{J}}
\def\cK{\mathcal{K}}
\def\cW{\mathcal{W}}
\def\ncu{\mathbb{C} ^{(\N \times \N) \cdot d}}
\def\ncuext{\mathbb{C} ^{(\aleph _0 \times \aleph _0 ) \cdot d}}
\def\rball{\mathbb{B} ^{(\N \times \N) \cdot d}}
\newcommand{\Abs}[1]{\left\lvert #1 \right\rvert}
\newcommand{\norm}[1]{\lVert #1 \rVert}
\newcommand{\normBig}[1]{\Big\lVert #1 \Big\rVert}
\newcommand{\Norm}[1]{\left\lVert #1 \right\rVert}
\DeclareMathOperator{\GL}{GL}
\DeclareMathOperator{\sgrm}{SGRM}
\DeclareMathOperator{\tr}{tr}
\DeclareMathOperator{\gm}{GM}
\title{Rings of non-commutative functions \\ and their fields of fractions}
\author[1]{M\'eric L. Augat\thanks{Partially supported by NSF grant DMS-2155033}}
\affil[1]{\footnotesize James Madison University}
\author[2]{Robert T.W. Martin\thanks{Supported by NSERC grant 2020-05683}}
\affil[2]{\footnotesize University of Manitoba}
\author[3]{Eli Shamovich\thanks{Partially supported by BSF grant 2022235}}
\affil[3]{\footnotesize Ben-Gurion University of the Negev}
\date{}
\begin{document}
\maketitle
\vspace{-.75cm}

\begin{abstract}
Semi-free ideal rings, or \emph{semifirs}, were introduced by Paul M. Cohn to study universal localizations in the non-commutative setting. We provide new examples of semifirs consisting of analytic functions in several non-commuting variables. These examples arise canonically in free analysis by completing the free algebra in the topology of ``uniform convergence on operator-space balls'' in the non-commutative universe of tuples of square matrices of any finite size. 

We show, in particular, that the ring of (uniformly) entire non-commutative (NC) functions in $d \in \N$ non-commuting variables, $\scr{O}_d$, is a semifir. Every finitely--generated right (or left) ideal in $\scr{O}_d$ is closed, which yields an analytic extension of G. M. Bergman's nullstellensatz for the free algebra. Any semifir admits a universal skew field of fractions; applying this to $\scr{O}_d$ yields the universal skew field of ``NC meromorphic expressions", $\scr{M} _d$. We show that any $f \in \scr{M} _d$ has a well-defined domain and evaluations in a large class of stably-finite topological algebras, including finite $C^*$-algebras, extending a result of Cohn for NC rational functions.
As an application, we extend an ``almost sure convergence" result of Haagerup and Thorbj{\o}rnsen for free polynomials evaluated on tuples of random matrices to the setting of NC meromorphic expressions.
\end{abstract}

\section{Introduction}

This paper studies algebras of non-commutative functions and their ring-theoretic properties. The theory of non-commutative functions lies in the intersection of free analysis and pure algebra. This is exemplified in this paper in which we employ analytic techniques to prove algebraic results. Namely, we apply operator algebraic and functional analytic technologies, in particular the theory of non-commutative Hardy spaces of NC analytic functions, to prove that certain rings of non-commutative analytic functions presented as power series in several NC variables are semi-free ideal rings, a class of rings that admit universal localizations. We apply this to extend a result of Haagerup and Thorbj{\o}rnsen in random matrix and free probability theory.

Non-commutative function theory originated in the work of J.L. Taylor in multivariate spectral theory and functional calculus for tuples of non-commuting operators \cite{Taylor, Taylor2}. A non-commutative function in several non-commuting variables is a function on tuples of square matrices of any fixed, finite size, taking values in square matrices, and which has natural invariance properties enjoyed by free polynomials, $p \in \fp = \C \langle \fz_1,\cdots,\fz_d \rangle$. Here, $\fp$ denotes the \emph{free algebra} over $\C$ on $d$ generators. We will call elements of $\fp$ \emph{non-commutative} or \emph{free polynomials}. A function, $f$, on a direct sum-closed subset of the complex \emph{NC universe} of $d-$tuples of square matrices over $\C$ of any fixed finite size is an \emph{NC function} if it: (i) respects the grading (matrix size), (ii) respects direct sums and (iii) respects joint similarities, see Subsection \ref{ss:NCfun} for definitions and details. It is readily checked that any $p \in \fp$ is an NC function on the entire NC universe. These three axioms defining NC functions are surprisingly rigid: Any NC function that is `continuous' or even `locally bounded' in an `open' NC set is automatically holomorphic in the sense that it is Fr\'echet differentiable at any point in its domain and analytic in the sense that it has a convergent Taylor-type power series expansion about any point of its domain with non zero ``radius of convergence". (In order to precisely define the punctuated terms in the previous sentence, we need to introduce a topology on the NC universe, which we will do in Subsection \ref{ss:NCfun}. The topology that is most relevant for our investigations here is called the \emph{uniform topology} \cite[Section 7.2]{KVV}.)

Beyond free polynomials and NC rational functions -- which are NC functions constructed by applying the arithmetic operations of summation, addition and inversion to the free algebra -- many examples of NC functions are given by formal power series in several non-commuting variables. Any such free formal power series (FPS) in several non-commuting indeterminates defines a uniformly analytic NC function on a `ball' centred at the origin, $0 = (0, \cdots, 0) \in \C ^{1\times d}$, of the NC universe if it has a positive `radius of convergence'. Namely, let $\fps$ denote the ring of free FPS in the NC formal variables $\fz := (\fz_1, \cdots, \fz _d)$ with complex coefficients. Any $f \in \fps$ can be written
$$ f(\fz) = \sum _{\om \in \word} \hat{f} _\om \fz ^\om; \quad \quad \hat{f} _\om \in \C, $$ 
where $\om= i_1 \cdots i_n$ is any \emph{word} comprised of \emph{letters} $i_j$, chosen from the \emph{alphabet}, $i_j \in \{1, \cdots, d\}$. Here, $\word$ denotes the \emph{free monoid}, the set of all such words in $d$ letters. We also consider the \emph{empty word}, $\emptyset$, consisting of no letters, and the free monomials, $\fz ^\om$, are then defined in the obvious way: $\fz ^\om = \fz _{i_1} \cdots \fz _{i_n}$ if $\om = i_1 \cdots i_n$ and $\fz ^\emptyset =:1$.  G. F. Popescu \cite{Pop-freeholo} introduced an analogue of the Cauchy--Hadamard radius of convergence formula for such free FPS, $f$, given by
$$ \frac{1}{R_f} := \limsup _{\ell \rightarrow \infty} \sqrt[\uproot{3} 2\ell]{\sum _{|\om | = \ell} | \hat{f} _\om | ^2}, $$ see also \cite{Luminet-PI} and \cite{KVV}. In the above, $|\om | \in \N \cup \{ 0 \}$ denotes the \emph{length} of $\om \in \word$, \emph{i.e.,} the number of letters it contains. Given any $R \in (0, +\infty]$, we study the rings, $\scr{O} _d (R)$, of all free formal power series with radius of convergence at least $R$. We write $\scr{O} _d := \scr{O} _d (+\infty) = \cap _{R>0} \scr{O} _d (R)$, for the ring of NC entire functions. As proven in \cite[Theorem 1.1]{Pop-freeholo}, if $f \in \fps$, has radius of convergence $R_f >0$, then $f(X)$ converges absolutely and uniformly in an NC row-ball, $R_f \cdot \rball$, centred at the origin, $0=(0, \cdots, 0) \in \C ^{1\times d}$, of the $d-$dimensional NC universe, consisting of all square matrix $d-$tuples, $X = (X_1, \cdots, X_d) \in \C ^{n\times n} \otimes \C ^{1\times d}$, which define bounded linear maps from $\C ^n \otimes \C ^d$ into $\C^n$, with norm less than $R_f$. Hence, any $f \in \scr{O} _d (R)$ defines a (uniformly) analytic NC function in $R_f \cdot \rball$. In particular, $\scr{O}_d$ consists of holomorphic and analytic non-commutative functions on the entire NC universe and is, in a very natural sense, a canonical multivariate extension of the ring of entire functions in $\C$ to several non-commuting variables.

In pure non-commutative algebra, semi-free ideal rings were introduced by P.M. Cohn in his study of localizations and universal skew fields of fractions of non-commutative rings, see \cite{Cohn} for a full development of this theory. A ring, $\scr{R}$, is a \emph{semi-free ideal ring} (semifir for short), if every finitely generated right ideal of $\scr{R}$ is free as a right $\scr{R}-$module. This notion is left-right symmetric. Cohn showed that every semi-free ideal ring admits a (necessarily unique) universal skew field of fractions. There are not many known examples of semi-free ideal rings, and many non-commutative rings do not have universal localizations. (Certainly, any ring with zero divisors cannot be embedded into a skew field.) Among the few known examples are the free algebra, the ring $\fps = \C \langle \! \langle \fz _1, \cdots, \fz _d \rangle \! \rangle$ of formal power series in several non-commuting variables, the sub-ring of algebraic free FPS, the group ring of the free group and group rings over locally free groups (see, for example,  \cite{Cohn} and \cite{Dicks-grp}). Some more recent examples of universal localizations and of semifirs were constructed by Klep, Vinnikov, and Vol\v{c}i\v{c} in \cite{KVV-local, KVVol-multipartite}. The first main result of this paper provides a new family of examples of semifirs of uniformly analytic NC functions:

\begin{thm*}[Theorem \ref{thm:entires_semifir}]
    The rings $\scr{O}_d (R)$ are semi-free ideal rings for any $R \in (0, +\infty]$.
\end{thm*}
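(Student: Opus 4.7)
The plan is to verify Cohn's $n$-term weak algorithm (see \cite{Cohn}, Chapter~2) for the filtered ring $\scr{O}_d(R)$ equipped with the order valuation
\[ v(f) := \min \{ |\om| \, : \, \hat{f}_\om \neq 0 \}. \]
A ring admitting such a valuation and satisfying the weak algorithm is automatically a semifir, and this is precisely the route by which $\fp$ and $\fps$ are themselves shown to be semifirs in Cohn's theory.

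Every $f \in \scr{O}_d(R)$ has a well-defined lowest-order homogeneous component in $\fp$, so the associated graded ring of $\scr{O}_d(R)$ with respect to $v$ is the free algebra $\fp$. Given a relation $\sum_{i=1}^n f_i g_i = 0$ with $f_i, g_i \in \scr{O}_d(R)$, passing to leading homogeneous parts produces a non-trivial relation in $\fp$. Since $\fp$ is a semifir (indeed a fir), this leading-order relation can be trivialized by an invertible polynomial matrix $T_0 \in \GL_n(\fp) \subset \GL_n(\scr{O}_d(R))$, and applying $T_0$ strictly raises the order of the discrepancy. The procedure then iterates.

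The main obstacle is that this iteration is generally infinite: after countably many reductions, the accumulated trivializing matrix $T_\infty = \prod_k T_0^{(k)}$ is a priori only in $\GL_n(\fps)$, not in $\GL_n(\scr{O}_d(R))$. The core analytic task is a quantitative estimate showing that the $k$-th polynomial correction $T_0^{(k)} - I$, which is concentrated in homogeneous degrees $\geq k$, has coefficients controlled linearly by the Taylor coefficients of the original $f_i$. Combined with Popescu's Cauchy--Hadamard formula for the radius of convergence of a free FPS, such a bound would force the entries of $T_\infty$ to have radius of convergence at least $R$, placing $T_\infty \in \GL_n(\scr{O}_d(R))$ and closing the argument.

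The hardest step is producing this coefficient estimate. I expect to reinterpret the elimination inside a Hilbert-module framework: embed the row $(f_1, \ldots, f_n)$ as a multiplier acting on a shift-invariant subspace of the NC Hardy space $\hardy$ at an appropriate scale $r < R$, and invoke a Popescu / Davidson--Pitts-type non-commutative Beurling theorem to extract a finite-dimensional wandering subspace whose basis freely generates the right ideal. In parallel, a normal-families / Montel-type compactness argument on operator-space balls would establish that finitely generated right ideals of $\scr{O}_d(R)$ are closed in the uniform topology; this analytic input is what guarantees that the formal iteration converges inside the ring itself, and it is also the statement announced in the introduction as the analytic extension of Bergman's nullstellensatz.
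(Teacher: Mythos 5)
There is a genuine gap, and it sits exactly where you flag it: the passage from a trivializing matrix in $\GL_n(\fps)$ to one in $\GL_n(\scr{O}_d(R))$. The order-valuation/inverse-weak-algorithm route is precisely Cohn's proof that the \emph{complete} filtered ring $\fps$ is a semifir; completeness in the $\fm$-adic (order) topology is what makes the infinite iteration converge, and $\scr{O}_d(R)$ is not complete in that topology — its order-completion is all of $\fps$. So your iteration, as described, only reproves that $\fps$ is a semifir. The ``quantitative estimate'' you defer — that the degree-$\geq k$ correction $T_0^{(k)}-I$ has coefficients controlled linearly by those of the $f_i$, uniformly in $k$ — is the entire content of the theorem, and there is no reason to expect it from the graded reduction: each step of the weak-algorithm elimination involves inverting leading coefficients and composing corrections, and radius-of-convergence control is exactly what fails in nearby settings (the paper's remark that $\mult$ is \emph{not} a semifir, because row outers with dense range cannot be trivialized, shows that convergence of formal trivializations is a real obstruction, not a technicality). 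A secondary issue: ``passing to leading homogeneous parts produces a non-trivial relation in $\fp$'' needs care, since only the summands realizing the minimal total order $v(f_i)+v(g_i)$ contribute, and the resulting graded relation may be trivial or may not control the next-order discrepancy without the full $v$-dependence machinery.

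The paper's actual argument avoids the order filtration entirely. It first trivializes the relation on each row-ball $r\cdot\rball$, $r<R$, inside $\A(r)$, using the NC inner--outer factorization together with the key analytic fact (Lemma \ref{lem:outer_restriction}, via Popescu's corona theorem) that an outer multiplier becomes \emph{surjective} upon restriction to a strictly smaller ball — this is what ``trivializes row outers'' and is unavailable at a fixed radius. It then glues these radius-by-radius trivializations into an element of $\GL_n(\scr{O}_d(R))$ by an Arens-type dense-inverse-limit construction: consecutive trivializers differ by block lower-triangular invertibles (Lemma \ref{lem:all_solutions}), which can be perturbed within $\GL_n(\A(t_{m+1}))$ to make the restrictions $\varepsilon_m$-close, with $\varepsilon_m$ chosen small relative to the invertibility radii $r_m(S_m)$ so the limits stay invertible. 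The stabilization of the wandering dimension of $\nbker F(rL)$ plays the role of a Mittag--Leffler condition. Note also that closedness of finitely generated ideals (your proposed analytic input) is derived in the paper \emph{from} the semifir property, not used to prove it. If you want to salvage your outline, the missing ingredient is a replacement for the coefficient estimate by this corona-plus-inverse-limit mechanism.
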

It follows that each $\scr{O} _d (R)$, $R\in (0, +\infty]$ has a universal skew field of fractions, which we denote by $\scr{M} _d (R)$. In particular, we will call $\scr{M} _d := \scr{M} _d (\infty)$ the field of (global) NC (uniformly) meromorphic expressions -- c.f. the \textit{local} field of (uniformly) meromorphic NC germs at $0\in \C ^{1\times d}$ investigated in \cite{KVV-local}. Our proof of the above theorem relies heavily on the operator--algebraic ``Non-commutative Hardy space theory" initiated by Popescu and Davidson--Pitts, in which the full Fock space over $\C^d$ is interpreted as the ``free Hardy space" of square--summable formal power series in $d$ NC variables \cite{Pop-freeholo,Pop-entropy,Pop-ncdisk,DP-inv}. In particular, we will employ the non-commutative inner--outer factorization \cite{DP-inv,Pop-char,Pop-multi} for the free Hardy space, and extensions of methods in \cite{JMS-ncBSO} in conjunction with the structure of $\scr{O}_d (R)$ as an inverse limit of non-commutative disk algebras on balls of increasing radii \cite{Arens-dense_lim}. 

We further obtain an analytic extension of a non-commutative nullstellensatz for the free algebra due to G. M. Bergman to each of our semifirs, $\scr{O} _d (R)$, see \cite{HelMcC-positivss}. To this end, define the directional variety of $S \subseteq \scr{O}_d(R)$ as
\[
\scr{Z}(S) = \bigsqcup_{n \leq \aleph_0} \left\{ \left. (X,y) \in R \cdot \bdn \times (\C ^n \sm \{ 0 \}) \right| \  y \perp \nbran f(X) \ \forall f \in S \right\}.
\]
In the above, $\bdn$, denotes the $n$th level of the NC unit row-ball consisting of all $X = (X_1, \cdots, X_d) \in \C ^{n\times n} \otimes \C ^{1\times d}$ so that $\| X \| _{\scr{B} (\C ^n \otimes \C ^d, \C^n )} <1$. Note that we also include the `infinite level', 
$\B ^{(\aleph _0 \times \aleph _0) \cdot d}$, consisting of all $T=(T_1, \cdots, T_d) \in \scr{B} (\cH) \otimes \C ^{1\times d}$ for which $\| T \| _{\scr{B} (\cH \otimes \C ^d, \cH )} <1$, where $\cH$ is a complex and separable Hilbert space. Such $X,T$ are called strict \emph{row contractions}. As in the classical case, for every $$\scr{Z} \subseteq \bigsqcup_{n\leq \aleph_0} R \cdot \bdn \times (\C^n \setminus \{0\}),$$ we can define a right ideal of analytic NC functions
\[ I (\scr{Z}) = \left\{f \in \scr{O}_d (R) \left| \ y \perp \nbran f(X)  \ \forall \ (X,y) \in \scr{Z} \right. \right\}.
\]

\begin{thm*}[Analytic Bergman Nullstellensatz, Theorem \ref{thm:bergman}]
Let $J \vartriangleleft \scr{O}_d (R)$, $R \in (0, +\infty]$ be a finitely--generated right ideal. Then, $J$ is closed and $J = I(\scr{Z}(J))$.
\end{thm*}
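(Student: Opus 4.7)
The inclusion $J \subseteq I(\scr{Z}(J))$ is immediate from the semifir structure. By the preceding theorem, $J$ admits a finite free basis $f_1, \ldots, f_k$; assembling these into a row $F = (f_1, \ldots, f_k)$, every $f \in J$ has the form $f = F G$ for a column $G \in \scr{O}_d(R)^{k \times 1}$. Consequently $f(X) = F(X) G(X)$ yields $\ran f(X) \subseteq \ran F(X) = \sum_{i=1}^{k} \ran f_i(X)$ at every level $n \leq \aleph_0$, so any $(X,y)$ annihilating the generators also annihilates $f$.

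For the converse inclusion and closedness, the plan is to apply the non-commutative inner--outer factorization for the free Hardy space $\hardy$ (Davidson--Pitts and Popescu) to the row $F$. Exploiting the inverse limit presentation of $\scr{O}_d(R)$ as the intersection of NC disk algebras on row balls of radius $r < R$, at each scale $r$ the row $F$ is a bounded multiplier of the rescaled free Hardy space and factors as $F = \Theta W$ with $\Theta$ a row-inner multiplier and $W$ an outer matrix multiplier. The uniqueness clauses of the NC Beurling theorem let the local factorizations be patched into globally defined $\Theta$ and $W$ on $R \cdot \rball$. Pointwise, $\Theta(X)$ has closed range while $W(X)$ has dense range by outerness, so $\ran F(X) = \ran \Theta(X)$ at every $X$ in the NC universe -- crucially including the infinite level $n = \aleph_0$.

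With this factorization in hand, both $J$ and $I(\scr{Z}(J))$ should coincide with $\Theta \cdot \scr{O}_d(R)^{m \times 1}$, where $m$ is the row-width of $\Theta$. The identity $J = \Theta \scr{O}_d(R)^{m \times 1}$ follows from the outerness of $W$, which makes right-multiplication by $W$ surjective onto $\scr{O}_d(R)^{m \times 1}$, and the range identity above gives $\Theta \scr{O}_d(R)^{m \times 1} \subseteq I(\scr{Z}(J))$. For the remaining inclusion, given $f \in I(\scr{Z}(J))$, I would use the left inverse of $\Theta$ at the Hardy-space level to define a candidate factor $h := \Theta^{*} f$, and then show that $h$ is in fact uniformly analytic on every $r \cdot \rball$ with $r < R$, so that $h \in \scr{O}_d(R)^{m \times 1}$ and $f = \Theta h \in J$. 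Closedness of $J$ follows since $\Theta$ admits a bounded left inverse on each Hardy scale, making $\Theta \scr{O}_d(R)^{m \times 1}$ a closed-range subspace at every radius and hence closed in the Fr\'echet topology.

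The principal obstacle is precisely this last step: upgrading a Hardy-space or pointwise division $f = \Theta h$ to an honest NC analytic factorization in $\scr{O}_d(R)$. This demands scale-uniform estimates as $r \nearrow R$, and is where the extensions of the noncommutative BSO techniques of \cite{JMS-ncBSO} advertised in the introduction should play the decisive role. The infinite-level component of $\scr{Z}(J)$ is essential here, as it strengthens the range containment $\ran f(X) \subseteq \ran \Theta(X)$ from a matrix-level condition into one that controls $\Theta^{*} f$ strongly enough to force $h$ to lie in $\scr{O}_d(R)$ rather than merely in a Hardy module at some fixed radius.
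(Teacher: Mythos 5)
Your opening inclusion $J \subseteq I(\scr{Z}(J))$ is fine, but the rest of the proposal misses the reduction that makes the theorem tractable. The paper does not prove $I(\scr{Z}(J)) \subseteq J$ by dividing an arbitrary $f \in I(\scr{Z}(J))$ by an inner factor. It proves once and for all (Lemma \ref{lem:approx_nullstellensatz}) that $I(\scr{Z}(J)) = \overline{J}$ for \emph{any} right ideal, by a Hahn--Banach separation argument run through the inverse-limit presentation $\scr{O}_d(R) = \varprojlim \hardy(r_m)$: a continuous functional vanishing on $\overline{J}$ but not at $g$ factors through some $\hardy(r_m)$ and yields a vector orthogonal to $\nbran f(r_mL)$ for all $f \in J$ but not to $\nbran g(r_mL)$, witnessing $g \notin I(\scr{Z}(J))$. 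With this in hand the entire theorem reduces to showing that $J$ is closed. Your plan instead attacks the hard inclusion head-on via $h := \Theta^* f$, and you correctly flag that upgrading this Hardy-space division to membership in $\scr{O}_d(R)^m$ is the ``principal obstacle'' --- but that obstacle is never overcome in your sketch, and with the duality lemma it never needs to be.

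The second problem is that the factorization you lean on does not exist in the form described. ``Inner'' and ``outer'' are radius-dependent notions: the inner part $V_s$ of $F(sL)$, rescaled to a smaller ball, is generally no longer an isometry (Lemma \ref{lem:inner_restriction} guarantees only injectivity, and only for radii above a threshold $\rho$), and the rescaled outer part becomes surjective rather than merely dense-ranged. There is no canonical global $F = \Theta W$ on $R \cdot \rball$ obtained by ``patching via uniqueness''; for $R = +\infty$ a globally inner $\Theta$ is not even a well-defined notion. Likewise $J = \Theta\,\scr{O}_d(R)^{m}$ does not follow from outerness of $W$: outerness gives dense range of $W(L)$ on a Hardy space, and a right inverse only after strictly shrinking the radius, with that right inverse living in $\mult$ rather than in $\scr{O}_d(R)$. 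The paper's closedness argument avoids all of this by fixing radii $r < s < s' < R$, factoring $F(s'L) = VT$ at the single radius $s'$, using injectivity of the free generating row $F$ (Lemmas \ref{lem:big_enough_injective} and \ref{lem:inner_restriction}) to conclude that $T(\tfrac{r}{s'}L)$ is square and invertible, deducing that $G_n(rL)$ converges in $\A$ for each $r$ whenever $FG_n \to h$, and then invoking the Montel property (Theorems \ref{thm:montel} and \ref{thm:R_montel}) to extract a limit $G \in \scr{O}_d(R)^n$ with $h = FG \in J$. To salvage your outline, replace the global factorization with this fixed-radius argument and replace the division step with Lemma \ref{lem:approx_nullstellensatz}.
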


In Section \ref{sec:application}, we extend a theorem of P.M. Cohn for evaluation of non-commutative rational functions in any stably-finite algebra. Here, the \emph{free skew field} of all NC rational functions, $\fskew := \fskewd$, is the universal skew field of fractions of the free algebra, $\fp = \fpd$ \cite{Amitsur,Cohn}. Namely, we prove that NC meromorphic elements in $\scr{M} _d$ have well-defined evaluations and domains in any stably finite algebra, $\scr{A}$, that admits well-defined unital ring homomorphisms $\varphi: \scr{A} \rightarrow \scr{O} _d$; in particular in any stably finite $C^*-$algebra. This result enables us to develop concrete applications of our results to D.-V. Voiculescu's free probability theory and random matrix theory. Namely, we extend the results of Haagerup and Thorbj{\o}rnsen for the free algebra \cite{Hup-realize2} and of Yin \cite{Yin-strong_conv} for the free skew field, to the larger field of NC meromorphic expressions. To describe these results in more detail, let $s_1,\cdots, s_d$ denote the real parts of the left creation operators on the full Fock space over $\C ^d$. This $d-$tuple is the prototypical example of free semicircular random variables in free probability theory.

\begin{thm*}
Let $(\Omega,P)$ be a probability space. Let $X_1^{(n)},\cdots,X_d^{(n)}$ be a sequence of $d$-tuples of independent random matrices of size $n \times n$ with independent and identically distributed Gaussian entries over $\Omega$ with mean zero and variance $\frac{1}{n}$. Then, given any $f \in \scr{M} _d$, if $f(s_1,\cdots,s_d)$ is defined, then
$f(X_1^{(n)}(\omega),\cdots,X_d^{(n)}(\omega))$ is well defined for sufficiently large $n \in \N$ and almost every $\omega \in \Omega$, and
\[
\lim_{n \to \infty} \|f(X_1^{(n)}(\omega),\cdots,X_d^{(n)}(\omega))\| = \|f(s_1,\cdots,s_d) \|.
\]
\end{thm*}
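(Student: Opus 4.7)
The plan is to combine three ingredients: the Haagerup--Thorbj\o rnsen strong convergence theorem for free polynomials in Gaussian random matrices (together with Yin's extension to the free skew field), the evaluation theory for $\scr{M}_d$ in stably finite algebras developed in Section \ref{sec:application}, and the fact that every $g \in \scr{O}_d$ converges uniformly as a power series on NC operator-space balls of arbitrary finite radius.

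First I would reduce from $\scr{M}_d$ to $\scr{O}_d$. By the evaluation theorem, the hypothesis that $f(s_1,\ldots,s_d)$ is defined means $f$ admits a rational expression $\Phi(g_1,\ldots,g_k)$ with $g_1,\ldots,g_k \in \scr{O}_d$ for which every nested inversion prescribed by $\Phi$ is well defined at the row contraction $s=(s_1,\ldots,s_d)$ in the stably finite C$^*$-algebra $C^*(s_1,\ldots,s_d)$. The same expression $\Phi$ may be used to attempt the evaluation in the stably finite algebra $\C^{n\times n}$ at $X^{(n)}(\omega)$. Hence it suffices to prove: (i) for every $g \in \scr{O}_d$, $\|g(X^{(n)}(\omega))\| \to \|g(s)\|$ almost surely, and (ii) if $g(s)$ is invertible, then $g(X^{(n)}(\omega))$ is invertible for all large $n$ a.s.\ and $\|g(X^{(n)}(\omega))^{-1}\| \to \|g(s)^{-1}\|$ a.s. Iterating (i) and (ii) through the finitely many layers of $\Phi$ then yields the conclusion.

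For (i), Haagerup--Thorbj\o rnsen gives $\|p(X^{(n)}(\omega))\| \to \|p(s)\|$ a.s.\ for every $p \in \fp$; in particular, for any $R > \|s\|$, the event $\Omega_R := \{\omega : \|X^{(n)}(\omega)\| \le R \text{ for all large } n\}$ has full probability. On $\Omega_R$, the Popescu--Cauchy--Hadamard estimate ensures that, for $g \in \scr{O}_d$ with truncations $g_N := \sum_{|\om|\le N} \hat{g}_\om \fz^\om$,
\[ \sup_{\|X\| \le R} \|g(X) - g_N(X)\| \longrightarrow 0 \quad \text{as } N \to \infty, \]
where the supremum is taken over every level of the NC universe. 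The three-term estimate
\[ \bigl|\,\|g(X^{(n)})\| - \|g(s)\|\,\bigr| \le \|g(X^{(n)}) - g_N(X^{(n)})\| + \bigl|\,\|g_N(X^{(n)})\| - \|g_N(s)\|\,\bigr| + \|g_N(s) - g(s)\|, \]
followed by first letting $n \to \infty$ (using H--T on the polynomial $g_N$) and then $N \to \infty$, reduces (i) to the polynomial case.

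For (ii), the key additional input is that strong convergence entails Hausdorff convergence of $\mr{spec}\bigl(p(X^{(n)})^* p(X^{(n)})\bigr)$ to $\mr{spec}\bigl(p(s)^* p(s)\bigr)$ for every $p \in \fp$; this follows from $\|q(X^{(n)})\| \to \|q(s)\|$ for all $q \in \fp$ via a standard continuous functional calculus argument. If $p(s)$ is invertible, then $0 \notin \mr{spec}(p(s)^* p(s))$, so for large $n$ a.s., $p(X^{(n)})$ is invertible (being square) and $\|p(X^{(n)})^{-1}\|^2 = \|(p(X^{(n)})^* p(X^{(n)}))^{-1}\| \to \|(p(s)^* p(s))^{-1}\| = \|p(s)^{-1}\|^2$. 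To upgrade from polynomials to $g \in \scr{O}_d$, once $\sup_{\|X\| \le R}\|g(X)-g_N(X)\| < 1/\|g(s)^{-1}\|$, a Neumann series argument yields $g_N(s)^{-1}$ close to $g(s)^{-1}$ and, on $\Omega_R$ for large $n$, $g_N(X^{(n)})^{-1}$ close to $g(X^{(n)})^{-1}$. The main obstacle I foresee is propagating (ii) through the nested inversions of $\Phi$: after each layer one must re-verify invertibility and convergence, and the resulting almost-sure events must be intersected. Since $\Phi$ has finite depth and the evaluation theorem of Section \ref{sec:application} guarantees coherence between $\C^{n\times n}$ and $C^*(s)$, this bookkeeping is tractable but is the most delicate part of the argument.
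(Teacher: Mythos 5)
Your steps (i) and (ii) are essentially the paper's Lemmas \ref{lem:strong_conv_entire} and \ref{lem:invertible}: the three-term truncation estimate for $g \in \scr{O}_d$ is exactly the paper's argument, and your spectral-Hausdorff detection of invertibility is equivalent to the paper's criterion $\normBig{\norm{T}^2 I - T^*T} < \norm{T}^2$ applied to the entire matrix function $F^*F$. The gap is in the step you yourself flag as delicate: ``iterating (i) and (ii) through the layers of $\Phi$'' does not follow from (i) and (ii) as stated. The point is that $X^{(n)}(\omega)$ and $s$ live in different algebras, so the only transferable information is \emph{strong convergence}, i.e.\ convergence of $\|q(\cdot)\|$ for all (matrix) polynomials $q$ in the tuple. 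Knowing $\|g(X^{(n)})\| \to \|g(s)\|$ and $\|g(X^{(n)})^{-1}\| \to \|g(s)^{-1}\|$ says nothing about $\|b^* g(X^{(n)})^{-1} c\|$, $\|g_1(X^{(n)})\,g_2(X^{(n)})^{-1}\|$, or the norm of any subsequent layer that mixes $g(X^{(n)})^{-1}$ with the original matrices: these are norms of compressions and products, not functions of the two norms you control. What the iteration actually requires is that strong convergence of $(X^{(n)})$ to $(s)$ upgrades to strong convergence of the \emph{augmented} tuple $(X^{(n)}, g(X^{(n)})^{-1})$. For $g$ a polynomial this is precisely Yin's theorem; for $g \in \scr{O}_d$ it is not something you have established, and (i)+(ii) do not imply it.

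The paper closes exactly this hole by a different reduction. Since $f(s)$ being defined means (by the domain definition of Section \ref{sec:application}) that $f$ has a realization $f = b^* A(\vec h)^{-1}c$ with a \emph{single} inversion of a full entire matrix $A(\vec h) \in \scr{O}_d^{k\times k}$ invertible at $s$, one replaces the entire entries by their polynomial truncations $Q_m$; Lemma \ref{lem:rat_approx} (a Neumann-series/open-mapping argument, as you sketch) gives $Q_m^{-1} \to A(\vec h)^{-1}$ \emph{uniformly on a uniform row-ball around any point of the domain}, hence $f$ is uniformly approximated near $s$ and near the $X^{(n)}(\omega)$ by genuine NC rational expressions $\fr_m$ in the original variables. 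Yin's theorem then applies verbatim to each $\fr_m$, and a final three-term estimate finishes the proof. So your argument becomes correct if you (a) work with the single-pencil representation rather than an arbitrary nested $\Phi$, and (b) replace the ``iterate (i) and (ii)'' step by ``approximate uniformly by NC rational expressions and invoke Yin,'' which is the one ingredient your outline lists but never actually deploys.
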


\subsection{Structure of the paper}

Section \ref{sec:prelims} provides the necessary background on NC functions, the full Fock space, and the relevant operator algebras generated by the left creation operators. Some preliminary results on the topology of $\scr{O}_d$ and $\scr{O}_d (R)$ are contained in Section \ref{sec:topology}. The main results on the algebraic structure of our algebras and the interplay between the algebraic and the topological structures are in Section \ref{sec:alg_prop}. In Section \ref{sec:usfield} we describe the universal skew fields of NC meromorphic expressions, $\scr{M} _d (R)$, more concretely as NC rational expressions in $\scr{O} _d (R)$. Section \ref{sec:application} deals with evaluation in stably finite algebras and our extension of the result of Haagerup and Thorbj{\o}rnsen. Our final section \ref{outlook} describes several open questions and problems.

\section{Preliminaries} \label{sec:prelims}

\subsection{Non-commutative functions} \label{ss:NCfun}

The complex $d$-dimensional, $d\in \N$, NC affine space is 
\[ \ncu := \bigsqcup_{n=0}^{\infty} \cdn; \quad \quad \cdn := \C ^{n \times n} \otimes \C ^{1 \times d}.\] That is, $\ncu$ is the space of all $d-$tuples of complex square matrices of all finite sizes. We will often refer to $\ncu$ as the \emph{non-commutative universe}. We are primarily interested in non-commutative theory and so we typically assume that $d>1$. We choose to view $\cdn$ as row $d-$tuples of $n\times n$ complex matrices, and we will equip $\ncu$ with a `row' operator space structure \cite{Paulsen-cbmaps}.  The classical $d$-dimensional affine space is the collection of all homomorphisms of the polynomial ring in $d$-variables. Similarly, $\cdn$ is the space of all finite-dimensional representations of the free algebra $\fp = \C \langle \fz_1, \cdots, \fz_d \rangle$. It will be convenient and necessary, on occasion,  to include the infinite level, $\ncuext := \ncu \bigsqcup \C ^{(\infty \times \infty) \cdot d}$, where $\C ^{(\infty \times \infty) \cdot d} := \scr{B} (\cH) ^{1 \times d}$ is identified with row $d-$tuples of bounded linear operators on a separable, complex Hilbert space, $\cH$. Choosing $\cH = \ell ^2 (\N)$ and fixing the standard orthonormal basis, $(e _j )_{j=1} ^\infty$, we can identify each level, $\cdn$, of the NC universe as the compression of $\scr{B} ( \ell ^2 (\N) ) \otimes \C ^{1 \times d}=: \scr{B} (\ell ^2 (\N)) ^{1\times d}$ to the subspace spanned by the first $n$ standard basis vectors. That is, we consider $d$-tuples of operators on a separable Hilbert space as the `infinite level'. 

There are two natural operations on the NC universe (that can also be viewed through the lens of representation theory). We have the \textit{direct sum}: For $X \in \cdn$ and $Y \in \cdm$, we define
\[
X \oplus Y = \left( \begin{pmatrix} X_1 & 0 \\ 0 & Y_1 \end{pmatrix}, \cdots, \begin{pmatrix} X_d & 0 \\ 0 & Y_d \end{pmatrix} \right) \in \C ^{((m+n) \times (m+n)) \cdot d}
\]
This operation allows us to connect different levels of our space. The second operation is \textit{joint similarity}. For every $n \in \N$ there is a natural diagonal adjoint action of $\GL_n$ on $\cdn$. Namely, if $S \in \GL_n$ and $X \in \cdn$, $S^{-1} X S := (S^{-1} X_1 S, \cdots, S^{-1} X_d S) \in \cdn$.

A non-commutative (NC) set is any subset $\Omega \subseteq \ncu$ that is closed under direct sums. If $\Omega \subseteq \ncu$ is an NC set, an \textit{NC function} $f \colon \Omega \to \C ^{\N \times \N} = \C^{(\N \times \N) \cdot 1}$ is a function that satisfies the following three natural properties:
\begin{itemize}
    \item $f$ is \textit{graded}: for every $n \in \N$, $f(\Omega_n) \subseteq \C ^{n\times n}$. 

    \item $f$ \textit{respect direct sums}: for every $X, Y \in \Omega$, $f(X \oplus Y) = f(X) \oplus f(Y)$.

    \item $f$ \textit{respects similarities}: for every $n \in \N$, $X \in \Omega_n$, and $S \in \GL_n$, if $S^{-1} X S \in \Omega_n$, then $f(S^{-1} X S) = S^{-1} f(X) S$.
\end{itemize}
It is not hard to check that every free polynomial, $p \in\fp$, defines an NC function on $\ncu$. 

One can topologize $\ncu$ in several ways. The disjoint union topology was first considered by Taylor \cite{Taylor2}, the uniform topology was introduced in \cite{KVV}, and the free topology was studied in \cite{AgMc-freeimp}. It is a surprising result \cite{AgMcC-global, KVV} that for an NC function defined on a disjoint-union open set, local boundedness automatically implies the function is holomorphic. Moreover, any such locally bounded NC function is analytic in the sense that it admits a certain Taylor-type power series expansion, called a \emph{Taylor--Taylor series} about every point in its NC domain with non-zero radius of convergence. In particular, the Taylor--Taylor series of any such function at the origin, $0=(0, \cdots, 0) \in \C^{1\times d}$ of the NC universe is a free formal power series with complex coefficients \cite[Corollary 4.4]{KVV}.

Local boundedness in different topologies on $\ncu$ produces different algebras of NC analytic functions. For our purposes, the uniform topology on the NC universe is most relevant. To introduce the uniform topology, first consider the \emph{NC unit row-ball},
\[
\rball := \bigsqcup_{n=1}^{\infty} \bdn, \ \text{ where } \ \bdn := \left\{ X \in \cdn \left| \ \sum_{j=1}^d X_j X_j^* < I_n \right. \right\}.
\]
That is, $\rball$ is the set of all finite--dimensional strict \emph{row contractions} which means that if $X \in \bdn$, then $\| X = (X_1, \cdots, X_d ) \| _{\scr{B} (\C ^n \otimes \C ^d, \C ^n )} < 1$. Namely, any such $X$ defines a strictly contractive linear map from $d$ copies of $\C ^n$ into one copy. It can also be defined as the open unit ball with respect to the row operator space structure over $\C$ with $d$ components \cite{Paulsen-cbmaps}. For $r >0$, we will denote by $r \cdot \rball$ the appropriate rescaling of $\rball$ with radius $r$. By $\overline{\rball}$ we will denote the level-wise closure of $\rball$ consisting of all finite--dimensional row contractions. For $Y \in \cdn$, also define the \emph{uniform NC unit row-ball centered at} $Y$, $\B ^d _{\N n} (Y)$, as 
\begin{align*} &\B ^d _{\N n} (Y) := \bigsqcup _{m=1} ^\infty \B (Y)^{(mn \times mn)\cdot d}; \\ 
&\B (Y) ^{(mn \times mn)\cdot d}  := \left\{ X \in \C ^{(mn \times mn)\cdot d} \left| \ \| X - I_m \otimes Y \| _{\scr{B} (\C ^{mn} \otimes \C ^d, \C ^{mn})} <1  \right. \right\}.
\end{align*}
Taking $Y=0$ at level one of the NC universe yields the NC unit row-ball. The \emph{uniform topology} on $\ncu$ is then the topology generated by the sub-base of open sets consisting of all uniform NC unit row-balls centered at any point $Y \in \ncu$ \cite[Section 7.2]{KVV}. Again, in \cite[Section 7.2]{KVV}, the uniform topology can be defined with respect to any operator space structure on $\C ^d$; we have chosen to work with the row operator space structure. In this paper, we study algebras of uniformly bounded NC functions defined on uniformly open NC row-balls centered at $0$. Again, since such functions are bounded, hence locally bounded in the uniform topology, they are holomorphic in the sense that they are Fr\'echet differentiable about any point in their uniformly open domains, and they are analytic in the sense that they have a so-called Taylor--Taylor series power series expansion about any point in their domains with non-zero radius of convergence \cite[Chapter 7]{KVV}.  

In \cite[Section 3]{Pop-freeholo}, Popescu introduced the algebra of all complex free formal power series with radius of convergence at least $R \in (0, +\infty]$:
\be \scr{O}_d(R) := \left\{ \left.  f \in \fpsd \right|  \  R_f \geq R \right\}, \ee
where recall that the Cauchy--Hadamard radius of convergence of a free formal power series, $f \in \fps$, also first introduced by Popescu in \cite{Pop-freeholo} is, 
\be \frac{1}{R_f} = \limsup _{\ell \rightarrow \infty} \sqrt[\uproot{3} 2\ell]{\sum _{|\om | = \ell} | \hat{f} _\om | ^2}. \ee

It is not hard to check that each such power series defines an NC function on $R\cdot\rball$ that is bounded on every $r \cdot \overline{\rball}$ for $0 < r < R$. Conversely, if we have an NC function $f$ defined on $\rball$ that is bounded on all row sub-balls of finite radii, then by \cite[Theorem 1.1 and Section 3]{Pop-freeholo}, or by a special case of \cite[Corollary 7.26 and Theorem 8.11]{KVV}, the Taylor--Taylor expansion of $f$ around the origin is a free formal power series that converges absolutely and uniformly on each such ball, and hence defines a uniformly analytic NC function in $R \cdot \rball$. Therefore, by \cite[Theorem 1.1]{Pop-freeholo}, $f \in \scr{O}_d(R)$. In particular, the ring of uniformly entire NC functions, $\scr{O} _d = \scr{O} _d (\infty)$, can be defined as the sub-ring of $\fps$ consisting of free FPS with infinite radius of convergence, or equivalently, as the algebra of NC functions defined on the entire NC universe which are bounded on any NC row-ball of finite radius. It is immediate from the definition that every NC function in $\scr{O}_d$ can be evaluated on any $d$-tuple of linear operators acting on a complex, separable Hilbert space simply by substituting the operators in place of the formal variables in the free FPS expansion.

\begin{remark} \label{Taylorloc}
J.L. Taylor developed a theory of ``localizations'' of the free algebra $\fp$ in \cite{Taylor2}. Taylor's localization is a topological algebra over the free algebra with a property that is today known as a homological ring epimorphism \cite[Definition 4.5]{GeigLenz}. (This is a different notion of ``localization" than we consider in this paper; our universal localizations of semifirs are universal skew fields of fractions.)  One family of such homological ring epimorphisms that Taylor proposes is the family of algebras $S(r)$ for $r = (r_1,\cdots,r_d)$ a $d-$tuple of positive numbers. Here,
\[
S(r) = \left\{ \left. \sum_{\alpha} a_{\alpha} \fz^{\alpha} \right| \text{for all } 1 \leq j \leq d \text{ and } 0 < s_j < r_j,\, \sum_{\alpha} |a_{\alpha}| s^{\alpha} < \infty \right\}.
\]
In this paper, we will show that in many regards the different mode of convergence introduced by Popescu \cite{Pop-freeholo} is more amenable to analysis than the one proposed by Taylor (see also \cite{Luminet-PI} and \cite{KVV}).
\end{remark}

\subsection{Bounded NC functions}

The full Fock space over $\C ^d$ is the Hilbert space $\bH^2_d = \bigoplus_{n=0}^{\infty} \left(\C^d\right)^{\otimes n}$. Equivalently, $\bH^2_d$ is the completion of the free algebra with respect to the inner product that makes the monomials an orthonormal basis.
That is, every element of $\bH^2_d$ can be viewed as a free formal power series with square--summable coefficients and hence $\hardy$ is a natural non-commutative and multivariate extension of the classical Hardy space, $H^2$, of square--summable Taylor series at $0$. (Such power series necessarily have radius of convergence at least $1$ and hence define analytic functions in the complex unit disk.)

The full Fock space is a \emph{non-commutative reproducing kernel Hilbert space} (NC-RKHS) in the NC unit row-ball in the sense of Ball, Marx, and Vinnikov \cite{BMV-kernels}. Namely, for any $Z \in \bdn$, the linear map $f \in \hardy \, \mapsto \, f(Z) \in \C ^{n \times n}$ is a bounded linear map. The NC reproducing kernel of $\hardy$ is a function from $\rball \times \rball$ into completely bounded linear maps and is called the \emph{NC Szeg\"o kernel}: For any $Z \in \bdm$, $W \in \bdn$ and $T \in \C ^{m\times n}$, 
\[
K(Z,W)(T) := \sum_{n=0}^{\infty} \sum_{|\alpha|=n} Z^{\alpha} T W^{\alpha *} \in \C ^{m\times n}; \quad \quad \alpha \in \word.
\]
We will write for every $Z \in \bdn$ and every $y, v \in \C^n$, $K\{Z,y,v\}$ for the element of $\bH^2_d$ that satisfies for every $f \in \bH^2_d$,
\[
\ip{K\{ Z,y,v\}}{f}_{\hardy} = \ip{y}{f(Z)v}_{\C^n} =: y^* f(Z)v.
\]
It is not hard to check that 
\[
K\{Z,y,v\} (\fz) = \sum_{\om \in \word} \ov{y^* Z^{\om} v} \, \fz ^\om.
\]
The vectors $K \{ Z,y,v \}$ are called \emph{NC Szeg\"o kernel vectors}, or \emph{NC point evaluation vectors}.  The left creation operators, $L_i \in \scr{B} (\bH^2_d)$, $1\leq i \leq d$ are the operators of left multiplication by the $d$ independent variables, $L_i = M^L _{\fz _i}$. Similarly, $R_i$ are the operators of right multiplication by the variables. The row $L = (L_1,\cdots, L_d)$ is a \emph{row isometry}, \emph{i.e.} an isometry from $\hardy \otimes \C ^d$ into $\hardy$, which further implies that each $L_i$ is an isometry on $\hardy$ so that the $L_i$ have pairwise orthogonal ranges. The orthogonal complement of the range of $L$ is spanned by the unit vector, $1$, also called the \emph{vacuum vector}. As was observed by Bunce \cite{Bunce}, Frazho \cite{Frazho}, and Popescu \cite{Popescu-dilations,Popescu-vnineq}, $L$ is a non-commutative analogue of the shift on the classical Hardy space of the disk and we will refer to the $L_i$ as the \emph{left free shifts}. Similarly, the \emph{right free shift}, $R = (R_1,\cdots,R_d)$ is a row isometry that is unitarily equivalent to $L$ via the unitary transpose involution that sends the standard basis vector, $\fz ^\alpha$, to $\fz ^{\alpha ^\mrt}$, where if $\alpha = i_1 \cdots i_n \in \word$, then $\alpha ^\mrt := i_n \cdots i_1$. 

The non-commutative Hardy algebra, $\mult$, is the algebra of all uniformly bounded NC functions in the NC unit row-ball, $\rball$. This algebra can be identified with the unital weak operator topology (WOT)--closed algebra generated by $L_1,\cdots,L_d$ \cite{DP-inv}. Moreover, $\bH^{\infty}_d$ can be identified with the algebra of left multipliers of $\bH^2_d$ \cite{Pop-freeholo,SSS}.

Fix $0 < r < 1$, $n \in \N$, $Z \in r \cdot \overline{\bdn}$, and unit vectors $v, y \in \C^n$. Write $\mr{Ad} _Z \colon \C ^{n\times n} \to \C ^{n\times n}$ for the completely positive adjunction map, $\mr{Ad} _Z (T) := \sum_{j=1}^d Z_j T Z_j^*$. By our assumption, 
$\|\mr{Ad} _Z \| = \| \mr{Ad} _Z (I_n) \| \leq r^2$. Then,
\ba
\|K\{Z,y,v\}\|^2 & = & \sum_{n=0}^{\infty} \sum_{|\alpha|=n} \left| y^* Z^\alpha v \right| ^2 \\
& = & \sum_{n=0}^{\infty} y^*  \mr{Ad} _Z ^{\circ n}(vv^*) y \\ 
&\leq & \| y \| ^2 \| v \| ^2 \sum_{n=0}^{\infty} r^{2n} = \frac{\| y \| ^2 \| v \| ^2}{1 - r^2}. \ea 
This simple fact can be used to prove the following lemma, \cite[Lemma 4.5]{JMS-ncBSO}.
\begin{lem} \label{lem:restriction_H^2_d}
Any $f \in \hardy$ is bounded on $r \cdot \overline{\rball}$ for all $0 < r < 1$.
\end{lem}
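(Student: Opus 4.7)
The plan is to deduce the lemma directly from the reproducing kernel estimate displayed immediately before its statement, together with the fact that $\hardy$ is an NC-RKHS on $\rball$ with kernel vectors $K\{Z,y,v\}$.

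First, I would fix $r \in (0,1)$, $n \in \N$, and a point $Z \in r \cdot \overline{\bdn}$. By the row contraction condition scaled by $r$, we have $\sum_{j=1}^d Z_j Z_j^* \leq r^2 I_n$, so the completely positive map $\mr{Ad}_Z$ satisfies $\|\mr{Ad}_Z\| \leq r^2$. The geometric series computation already carried out in the excerpt then gives the uniform bound
\[
\|K\{Z,y,v\}\|_{\hardy}^2 \;\leq\; \frac{\|y\|^2 \|v\|^2}{1-r^2},
\]
which is independent of the matrix size $n$ and of the particular $Z \in r \cdot \overline{\bdn}$.

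Next, I would use the NC reproducing property of $\hardy$. For any unit vectors $y, v \in \C^n$ and any $f \in \hardy$,
\[
\bigl| y^* f(Z) v \bigr| \;=\; \bigl| \ip{K\{Z,y,v\}}{f}_{\hardy} \bigr| \;\leq\; \|K\{Z,y,v\}\|_{\hardy} \, \|f\|_{\hardy} \;\leq\; \frac{\|f\|_{\hardy}}{\sqrt{1-r^2}},
\]
by Cauchy--Schwarz. Taking the supremum over all unit vectors $y, v \in \C^n$ yields $\|f(Z)\|_{\scr{B}(\C^n)} \leq \|f\|_{\hardy}/\sqrt{1-r^2}$. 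Since the right-hand side is independent of $n$ and of $Z \in r \cdot \overline{\bdn}$, this is a uniform bound on all of $r \cdot \overline{\rball}$.

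The proof is essentially a one-line Cauchy--Schwarz argument once the Szeg\"o kernel norm estimate is in hand, so there is no real obstacle; the only point to be mindful of is that the kernel norm bound must be verified for arbitrary $Z$ in the \emph{closed} scaled row-ball $r \cdot \overline{\bdn}$ (not just the open one), which is exactly what the estimate $\|\mr{Ad}_Z\| \leq r^2$ provides, ensuring the geometric series $\sum r^{2n}$ converges uniformly regardless of the level $n$.
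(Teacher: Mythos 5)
Your proposal is correct and follows exactly the paper's own argument: the Szeg\"o kernel norm bound $\|K\{Z,y,v\}\|^2 \leq \|y\|^2\|v\|^2/(1-r^2)$ from the preceding computation, the reproducing property $y^* f(Z) v = \ip{K\{Z,y,v\}}{f}$, and Cauchy--Schwarz followed by a supremum over unit vectors. No differences worth noting.
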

\begin{proof}
Given any $Z \in r \cdot \overline{\rball}$,
\ba  \|f(Z)\| & = &  \sup \left\{ \left. | y^* f(Z) v |  \ \right| \  \|v\|,\, \|y\| = 1   \right\} \\
& = & \sup \left\{ \left. |\ip{K\{ Z, y ,v \}}{f}| \ \right| \ \|v\|, \, \|y\| = 1 \right\} \\
&\leq & \frac{\|f\|_{\hardy}}{\sqrt{1 - r^2}}. \ea
\end{proof}

For any $f \in \mult$ we define $f(L) := M^L _f \in \scr{B} (\hardy )$ as the bounded linear operator that acts as left multiplication by $f$. We note that for every $0< r < 1$ the operator $rL$ is a strict row contraction. Hence, the bounded linear operator, $f(rL)$, is well-defined. The operator $f(rL)$ corresponds to the restriction of $f$ to $r \cdot \rball$. More generally, this identification extends to $f \in \bH^{\infty}_d \otimes \C ^{n\times k}$, for all sizes of matrices. 

We will say that a function $f \in \bH^{\infty}_d$ and, more generally, $f \in \bH^{\infty}_d \otimes \C ^{n \times k}$ is \emph{inner}, if the operator $f(L)$ is an isometry. Similarly, we say that $f$ is \emph{outer} if $f(L)$ has dense range. Every element $f \in  \mult \otimes \C ^{k \times n}$ admits an \emph{inner--outer factorization}, $f = V \cdot T$, where $V$ is inner and $T$ is outer. This follows from the non-commutative extension of the classical Beurling theorem that was first discovered by Popescu in \cite{Pop-char, Pop-multi} and later proved independently by Davidson and Pitts in \cite{DP-inv}. The proof of this NC Beurling theorem relies on the notion of wandering subspaces for row isometries. Namely, let $\cH \subseteq \hardy \otimes \C^n$ be a closed right shift-invariant subspace. Then, the \emph{wandering subspace} of $\cH$ is defined as the orthogonal complement of the range of $R$ restricted to $\cH \otimes \C^d$,
$$ \scr{W} _\cH := \cH \ominus R (\cH \otimes \C ^d). $$
Elements of $\scr{W} _\cH$ are called \emph{wandering vectors}. We will require several technical lemmas on inner and outer NC functions in the sequel.

\begin{lem} \label{lem:outer_restriction}
Let $F \in \bH^{\infty}_d \otimes \C ^{k \times n}$ be outer. Then, for every $0 < r < 1$, $F(rL)$ is surjective. Moreover, there exists $H_r \in \bH^{\infty} \otimes \C ^{n \times k}$ such that $F(rL) H_r (L)= I_{\hardy} \otimes I_k$.
\end{lem}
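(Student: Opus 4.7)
The equation $F(rL)H_r(L) = I_{\hardy}\otimes I_k$ asserts that the multiplier $G(\fz) := F(r\fz)$, which equals $F(rL)$ as a bounded operator $\hardy\otimes\C^n\to\hardy\otimes\C^k$, admits a right multiplier inverse. Since $F$ has radius of convergence at least $1$, the series $G$ has radius at least $1/r > 1$, so $G \in \scr O_d(1/r)\otimes\C^{k\times n}$ and $G$ extends analytically to the strictly larger NC ball $(1/r)\rball$, which contains $\overline{\rball}$. This analytic extension past the closed unit ball is the crucial structural feature of $F(rL)$ that should allow one to upgrade ``dense range'' to ``surjective''.

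First, I would note that outerness of $F$ forces pointwise right-invertibility: the identity $F(L)^* K\{Z, y, v\} = K\{Z, F(Z)^* y, v\}$ (suitably interpreted for the matrix-valued multiplier $F$) shows that injectivity of $F(L)^*$ implies $F(Z)$ is surjective at every matrix point $Z \in \rball$, so in particular $G(Z) = F(rZ)$ is right-invertible on $\overline{\rball}$. Using the intertwining $F(rL)\Lambda_r = \Lambda_r F(L)$, where $\Lambda_r\colon\hardy\to\hardy$ is the bounded contraction $\fz^\om\mapsto r^{|\om|}\fz^\om$ with dense range, one then transfers density of range from $F(L)$ to $F(rL)$.

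The second and main step is to upgrade pointwise right-invertibility on $\overline{\rball}$ to a uniform operator estimate $F(rZ)F(rZ)^*\geq \varepsilon_r I$ valid across all matrix levels, which is equivalent to $F(rL)$ being bounded below and hence surjective. Once this uniform lower bound is secured, the pointwise right inverse $H_r(Z) := F(rZ)^*\bigl(F(rZ)F(rZ)^*\bigr)^{-1}$ defines a bounded NC function on $\rball$; thus $H_r \in \mult\otimes\C^{n\times k}$, and the identity $F(rL)H_r(L) = I_{\hardy}\otimes I_k$ follows from the multiplier--NC function correspondence applied to the pointwise identity $F(rZ)H_r(Z) = I_{mk}$ at every matrix point.

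The principal obstacle is precisely this uniform lower bound. Classical arguments (a nonvanishing continuous function on a compact closed disk is bounded below) do not transfer directly since the matrix levels of $\overline{\rball}$ are unbounded in dimension, so pointwise right-invertibility alone need not give uniform control of the inverses. The resolution must exploit that $G$ extends past $\overline{\rball}$ into $(1/r)\rball$: combining outerness of $F$, which secures pointwise right-invertibility throughout $\overline{\rball}$, with the analytic extension of $G$ strictly beyond $\overline{\rball}$, should yield the required operator-norm estimate, likely via Popescu's NC Poisson transform or the operator-space arguments of the type used in \cite{JMS-ncBSO}.
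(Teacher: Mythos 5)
Your reduction of the lemma to a uniform lower bound on $F(rZ)^*$ over all matrix points $Z \in r\cdot\rball$ matches the paper's strategy, and your observation that outerness forces pointwise surjectivity of $F(Z)$ via kernel vectors is exactly the paper's first step. But the proposal has two genuine gaps. First, the central estimate --- that pointwise right-invertibility upgrades to a \emph{uniform} bound $F(rZ)F(rZ)^* \geq \varepsilon_r I$ across all levels --- is exactly the step you defer ("should yield," "likely via Popescu's NC Poisson transform"), and your proposed mechanism is not the right one: the analytic extension of $G(\fz)=F(r\fz)$ past $\overline{\rball}$ plays no role in the paper's argument. What the paper actually uses is that for $r<1$ the NC Szeg\"o kernel vectors $K\{W,y,v\}$ at points $W \in r\cdot\rball$ are \emph{uniformly norm-bounded} (the computation preceding Lemma \ref{lem:restriction_H^2_d}). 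If $F(W^{(\ell)})^* u^{(\ell)} \to 0$ for unit vectors $u^{(\ell)}$ and points $W^{(\ell)} \in r\cdot\rball$ of growing size, one forms the bounded sequence of kernel vectors $K\{W^{(\ell)}, u^{(\ell)}, u^{(\ell)}_i\}$, extracts a weak limit $h^{(i)}$, checks via the empty-word coefficient that some $h^{(i)} \neq 0$, and verifies $\langle h^{(i)}, F(L)x\rangle = 0$ for all $x$ --- contradicting that $F(L)$ has dense range. This weak-compactness argument is the missing idea; without it (or a substitute) the proof does not close, and it is not clear that an approach based on the extension of $G$ beyond the closed ball can work, since pointwise invertibility on noncompact matrix levels genuinely fails to self-improve to a uniform bound without invoking outerness again at the "infinite level."

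Second, your construction of the right inverse $H_r(Z) := F(rZ)^*\bigl(F(rZ)F(rZ)^*\bigr)^{-1}$ does not produce an element of $\mult \otimes \C^{n\times k}$: the adjoint $F(rZ)^*$ is anti-holomorphic in $Z$, so this formula is not an analytic NC function and cannot be a left multiplier. The correct tool, which the paper uses, is Popescu's corona theorem \cite[Corollary 3.2]{Pop-multi}: once $F(rL)^*$ is bounded below, the corona theorem supplies a genuine multiplier right inverse $H_r \in \mult \otimes \C^{n\times k}$. A smaller inaccuracy: the paper establishes pointwise surjectivity of $F(Z)$ only at finite-dimensional points and the uniform bound over $r\cdot\rball$, not over all of $\overline{\rball}$ as you state; this does not affect the logic but the weak-compactness step is what actually carries the argument to the operator $F(rL)^*$ itself.
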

\begin{proof}
Since $F$ is outer, $F(L)$ has dense range, by definition, and it is clear that $F(rL)$ will then also have dense range for any $0<r<1$. Moreover, $F(rL)$ will be surjective if and only if $F(rL) ^*$ is bounded below. We first claim that $F(rL)^*$ is bounded below if $F(Z) ^*$ is uniformly bounded below for all $Z \in r \cdot \rball$.

First, since $F$ is outer, we note that $F(Z) : \C ^m \otimes \C ^n \twoheadrightarrow \C ^m \otimes \C ^k$ is surjective onto $\C ^m \otimes \C ^k$ for all $Z \in \cdmr$. Indeed, if there exists a $y \in \C ^m \otimes \C ^k$ and $Z \in \cdmr$ so that $y^* F(Z) =0$, then $K\{Z,y,v\} \in \hardy \otimes \C ^k$ is orthogonal to $\nbran F(L)$ for any $v \in \C ^m$, a contradiction. 

Suppose that $F(rL) ^*$ is not bounded below. Then, given any $\eps >0$ there exists an $x \in \hardy \otimes \C ^k$ so that $\| F(rL) ^* x \| < \frac{\eps}{2} \| x \| $. Since $F(rL) \in \A \otimes \C ^{n \times k}$, it can be approximated in operator norm by matrices of free polynomials. In particular, it is the operator--norm limit of
its Ces\`aro partial sums. This claim can be proven from the properties of the Ces\`aro summation map \cite{DP-inv}, the fact that for a function $f \in \A$, $f(tL)$ converges to $f$ in norm as $t$ increases to $1$ \cite{Popescu-poisson_transform}, a straightforward estimate that shows that the free polynomial partial sums of $f(tL)$ converge in norm to $f(tL)$ for any $t \in (0,1)$, and a standard $\varepsilon/3-$argument. Hence, there exists an $N \in \N$ so that $\ell > N$ implies that $\| F_\ell (rL) ^* - F(rL) ^* \| < \frac{\eps}{2}$, where $F_\ell (rL)$ denotes the $\ell$-th Ces\`aro sum. Finally, we can approximate the ``infinite--dimensional point", $rL$, by finite--dimensional compressions, $Z^{(j)} \in r \cdot \rball$, of size $j$, by compressing $rL$ by the projection, $P_j$ onto the first $j$ standard basis vectors of $\hardy$. Then, since $F_\ell \in \fp \otimes \C ^{k \times n}$, $F _\ell (Z^{(j)}) ^*$ converges, in the strong operator topology, to $F _\ell (rL ) ^*$. Hence, there is a $J \in \N$ so that $j> J$ implies that 
$$ \| F _\ell (Z^{(j)}) ^* P_j x - F(rL) ^*  x \| = \| (F_\ell (Z^{(j)}) ^* - F(rL) ^* ) x \| < \eps \| x \|. $$ Since $P_j x \rightarrow x$ this proves that $F (Z^{(j)}) ^*$ is not uniformly bounded below, as $j \rightarrow \infty$. Hence $F(Z) ^*$ is not uniformly bounded below in $r \cdot \rball$. We conclude that if $F(Z) ^*$ is uniformly bounded below in $r \cdot \rball$, then $F(rL)^*$ is bounded below, and hence $F(rL)$ is surjective. 

We now show that $F(Z) ^*$ is uniformly bounded below in $r \cdot \rball$. If not, then there exist points $W^{(\ell)}$ of size $m_\ell \in \N$ in the row-ball of radius $r$ and unit norm vectors $u^{(\ell)} \in \C ^{m_\ell} \otimes \C ^k$ so that $\| F (W^{(\ell)}) ^* u^{(\ell)} \| \rightarrow 0$.

 Given any $h \in \hardy \otimes \C ^k$, $y = \bsm y_1 \\ \vdots \\ y_i \esm \in \C ^m \otimes \C ^k$, $y_i \in \C ^m$, and $v \in \C ^m$, we define $K\{ W, y, v \} \in \hardy \otimes \C ^k$ by 
$$ \ip{ K \{ W, y, v  \} }{h}_{\hardy \otimes \C ^k} := y^* h(W)v = (y_1 ^*, \cdots, y_k ^*) \bsm h_1 (W) v \\ \vdots \\ h_k (W) v \esm = \sum _{i=1} ^k y_i ^* h_i (W) v. $$ Note that if $y = \sum _{i=1} ^k y_i \otimes e_i$, that 
$$ K\{W, y , v \} = \bpm K \{ W , y_1 , v \} \\ K\{ W, y_2, v \} \\ \vdots \\ K\{ W , y_k , v \} \epm \in \hardy \otimes \C ^k.$$

For any $1 \leq i \leq k$, consider the sequence of vectors 
$$ h_\ell ^{(i)} := K \{ W ^{(\ell)}, u ^{(\ell)}, u ^{(\ell)} _i \}. $$ Here, we write $u^{(\ell)} = \sum_{i=1}^k u^{(\ell)}_i \otimes e_i$.

%$W^{(\ell)} \in r \cdot \rball$ is of size $m_\ell$, $u ^{(\ell)} \in \C ^{m_\ell} \otimes \C ^k$, so each $u^{(\ell)} _i \in \C ^{m _\ell}$ for $1\leq i \leq k$.

Then, it is clear that $K \{ W ^{(\ell)}, u ^{(\ell)}, u ^{(\ell)} _i \}$ is a uniformly norm-bounded sequence in $\hardy \otimes \C ^k$ for each $i$ since each $u ^{(\ell)}$ is a unit vector and $W^{(\ell)} \in r \cdot \rball$. It follows that there is a weakly convergent subsequence $h_{\ell_j} ^{(i)}:= K \{ W ^{(\ell_j)}, u ^{(\ell_j)}, u ^{(\ell_j)} _i \}$, with a weak limit, $h ^{(i)} \in \hardy \otimes \C ^k$. Note that 
$$ \hat{h} _\emptyset ^{(i)} = \lim _j \bpm u^{(\ell _j) *} _1 u ^{(\ell_j)} _i \\ \vdots \\ u^{(\ell _j) *} _k u ^{(\ell_j)} _i \epm \in \C ^k, $$ so that 
$$ \| \hat{h} _\emptyset ^{(i)} \| _{\C ^k} \geq \lim _j u^{(\ell _j) *} _i u ^{(\ell_j)} _i = \lim _j \| u ^{(\ell_j)} _i \| ^2. $$ If $\hat{h} _\emptyset ^{(i)} =0$ for every $1 \leq i \leq k$, this would imply that $$ 0 = \lim _j \sum _{i=1} ^k \| u ^{(\ell_j)} _i \| ^2 = \lim _j \| u ^{(\ell _j)} \| ^2 =1, $$ since each $u ^{(\ell_j)}$ is a unit vector. Hence at least one of the $h^{(i)} \neq 0$. 

%{\color{purple} I think that in the display inequality, the there shouldn't be a square on $\| \hat{h} _\emptyset ^{(i)} \|$. The reason is that $\| \hat{h} _\emptyset ^{(i)} \|^2 = \sum_{p=1}^k | \langle u^{(\ell_j)}_i, u^{(\ell_j)}_p \rangle|^2 \geq \|u^{(\ell_j)}_i\|^4$. I've the changed the y's to u's, because I felt that there were too many y's in the proof. Maybe, I should have changed the other ones.} All good! - Rob.

However, for any $x \in \hardy \otimes \C ^k$, and each $1 \leq i \leq k$,
\ba \ip{h ^{(i)}}{F(L) x}  & = & \lim _j u^{(\ell_j)*} F(W) x(W) u^{(\ell _j)} _i \\
& = & \left( F(W) ^* u^{(\ell_j)} \right)^* x(W) u^{(\ell _j)} _i \rightarrow 0. \ea 
This proves that $F(L)$ does not have a dense range, contradicting that $F$ is outer. 

The last part of the lemma follows from Popescu's corona theorem \cite[Corollary 3.2]{Pop-multi}. Since $T(rL)$ is surjective, $T(rL)^*$ is bounded below, and thus the corona theorem guarantees the existence of a right inverse.
\end{proof}

We note that if $f$ is an NC entire function then the operator $f(rL)$ makes sense for every $r > 0$. Moreover, since the power series of $f$ converges uniformly on any ball, we have $f(rL) \in \A$. Here, $\A$, the \emph{NC} or \emph{free disk algebra} of Popescu is the norm closure of the free algebra in $\bH^{\infty}_d$. Equivalently, $\A$ is completely isometrically isomorphic to the unital norm closed algebra generated by the left creation operators in $\scr{B} (\hardy)$. We will typically view elements of $\A$ and $\mult$ as left multiplication operators on $\hardy$ and given any $F \in \mult \otimes \C ^{m \times n}$ we will sometimes write $F(L)$ in place of $F$, to denote that $F$ acts as left multiplication by $F$. Indeed, evaluating the free formal power series of $F$ at $L=(L_1, \cdots, L_d)$ yields the operator of left multiplication by $F$ (if $F \in \A \otimes \C ^{m \times n}$ the partial Ces\`aro sums of this power series converge in operator-norm, while if $F \in \mult \otimes \C ^{m \times n}$ then these Ces\`aro sums and their adjoints converge in the strong$-*$ operator topology, as was discussed previously). 

\begin{lem} \label{cyclic-wand}
Let $F \in \mult \otimes \C ^{m \times n}$. Then the wandering dimension of $\nbran F(L) ^{- \| \cdot \|}$ is at most $n$.
\end{lem}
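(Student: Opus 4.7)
The plan is to use the non-commutative inner--outer factorization to reduce the problem to computing the wandering subspace of the range of an inner multiplier, and then to control the intermediate dimension by exploiting outerness.

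First I would observe that $\overline{\mathrm{Ran}\, F(L)}$ is right-shift invariant. Since $F$ is a left multiplier, $F(L)$ intertwines the right shifts: $F(L)(R_i \otimes I_n) = (R_i \otimes I_m)F(L)$ for each $i=1,\ldots,d$. Hence the norm-closure $\mathcal{H} := \overline{\mathrm{Ran}\, F(L)} \subseteq \hardy \otimes \C^m$ is invariant under $R \otimes I_m$.

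Next I would invoke the non-commutative Beurling/inner--outer factorization recalled in the paper: write $F = V\cdot T$ with $V \in \mult \otimes \C^{m\times k}$ inner and $T \in \mult \otimes \C^{k\times n}$ outer, for some intermediate dimension $k$. Since $V(L)$ is an isometry (so its range is closed) and $T(L)(\hardy \otimes \C^n)$ is dense in $\hardy \otimes \C^k$, this yields
\[
\mathcal{H} = \overline{V(L)\,T(L)(\hardy \otimes \C^n)} = V(L)(\hardy \otimes \C^k).
\]
Because $V(L)$ commutes with the right shifts and is isometric, it maps the wandering subspace of $\hardy \otimes \C^k$ (which is $1 \otimes \C^k$, of dimension $k$) isometrically onto the wandering subspace of $\mathcal{H}$. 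Thus $\dim \scr{W}_\mathcal{H} = k$.

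It remains to show $k \leq n$, and this is where outerness of $T$ enters. The vacuum-projection $P_\emptyset : \hardy \otimes \C^k \to \C^k$ is continuous and surjective, and for $h \in \hardy \otimes \C^n$ the constant term of the formal product $Th$ is just $T(0)\,\hat h_\emptyset$, so $P_\emptyset T(L)(\hardy \otimes \C^n) = \mathrm{Ran}\, T(0)$. Since $T(L)(\hardy \otimes \C^n)$ is dense in $\hardy \otimes \C^k$ and $\C^k$ is finite-dimensional, $\mathrm{Ran}\, T(0) = \C^k$; hence $T(0) : \C^n \to \C^k$ is surjective and $k \leq n$, as required. The main (minor) subtlety is just this last step: bounding the intermediate dimension in the factorization via the behaviour of $T$ at the origin; everything else is routine given the inner--outer framework already set up in the paper.
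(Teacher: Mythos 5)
Your proof is correct, but it takes a genuinely different route from the paper's. The paper argues directly: writing $P$ for the projection of $\scr{R}(F):=\overline{\nbran F(L)}$ onto its wandering subspace, it shows that the $n$ vectors $P(F(L)\,1\otimes e_j)$ span $\scr{W}(F)$, using only that $\{F(L)\,1\otimes e_j\}_{j=1}^n$ is a cyclic set for the restricted row isometry; no factorization is needed. You instead route through the inner--outer factorization $F=VT$, identify $\scr{W}(F)=V(L)(1\otimes\C^k)$ exactly, and bound $k$ by showing $T(0):\C^n\to\C^k$ is onto. Your argument buys a sharper statement (the wandering dimension \emph{equals} the number of columns of the inner factor, and the outer factor is surjective at the origin), at the cost of invoking the NC Beurling theorem.

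One point to tighten: in the paper's logical order this lemma is what later guarantees that the inner factor of a length-$n$ row has at most $n$ columns (Lemma \ref{lem:inner_restriction}), so you cannot assume from the outset that the intermediate dimension $k$ is finite without risking circularity. The general Beurling theorem gives $V$ with columns indexed by an orthonormal basis of the (a priori possibly infinite-dimensional) wandering subspace, so you should run your last step in that generality: $P_\emptyset T(L)(\hardy\otimes\C^n)=\nbran T(0)$ is a subspace of dimension at most $n$, hence closed, and being dense in the wandering space it must equal it --- which yields $k\le n$ without presupposing finiteness. As written, your appeal to ``$\C^k$ is finite-dimensional'' assumes what is essentially the conclusion; the fix is immediate, but worth making explicit.
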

\begin{proof}
The wandering space of $\scr{R} (F) := \nbran F(L) ^{-\| \cdot \|}$ is $\scr{W} (F) := \scr{R} (F) \ominus  R \otimes I_n (\scr{R} (F) \otimes \C ^d)$, and the wandering dimension of $\scr{R} (F)$ is the dimension of $\scr{W} (F)$. It is clear that the set $\{ F(L) 1 \otimes e_j \} _{j=1} ^n \subseteq \hardy \otimes \C ^n$ is a cyclic set for the restricted row isometry $R \otimes I_n | _{\scr{R} (F) \otimes \C ^d}$. We claim that if $P$ is the orthogonal projection of $\scr{R} (F)$ onto its wandering subspace, that the set of all $x_j := P (
F(L) 1 \otimes e_j )$, $1\leq j \leq n$ spans $\scr{W} (F)$. If not, suppose that there is an $x \in \scr{W} (F)$ so that $x \perp \{ x_j \} _{j=1} ^n$. Then, since $x \in \scr{W} (F)$, we have that 
$$ x \perp R^\om \otimes I_n x_j, \quad \quad \forall \ \om \neq \emptyset, $$ and $1 \leq j \leq n$, since $R^\om \otimes I_n x_j \in \scr{R} (F) \ominus \scr{W} (F)$ and $x \in \scr{W} (F)$. Moreover, $x \perp x_j$, $1 \leq j \leq n$, by assumption. Hence, 
$$ 0 = \ip{ R^\om \otimes I_n x_j }{x}_{\hardy \otimes \C ^n}, \quad \quad \forall \ \om \in \bW _d. $$ 
Since $\{ x _j \} _{j=1} ^n$ is an $R-$cyclic set, this implies that $x =0$, proving the claim. 
\end{proof}

\begin{lem} \label{lem:inner_restriction}
    Let $F = (f_1, \cdots, f_n) \in \scr{O} _d (R) ^{1\times n}$, $R \in (0, +\infty]$. For every $0<r<R$, let $F(rL) = V_r (L) T_r (L)$ be the inner--outer decomposition. Then, there exists $0< \rho <R$ and $k \leq n$, $k \in \N$, such that for every $r> \rho$, the length of $V_r$ is $k$. Moreover, if $\rho < r < s <R$, then $V_s(\tfrac{r}{s} L)$ is injective and $\nbker F(rL) = \nbker T_s(\tfrac{r}{s} L)$.

    \noindent In particular, if $F(rL)$ is injective, then $T_s(\tfrac{r}{s} L)$ is both injective and surjective, hence square and invertible.
\end{lem}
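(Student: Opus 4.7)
The plan is to exploit the scaling identity $F(rL) = V_s(\tfrac{r}{s}L)\, T_s(\tfrac{r}{s}L)$, obtained by substituting the row contraction $\tfrac{r}{s} L$ into the inner-outer factorization of $F(sL) = V_s(L) T_s(L)$. Since $T_s$ is outer and $\tfrac{r}{s} < 1$, Lemma \ref{lem:outer_restriction} forces $T_s(\tfrac{r}{s}L)$ to be surjective, so $\nbran F(rL)^{-\|\cdot\|} = \nbran V_s(\tfrac{r}{s}L)^{-\|\cdot\|}$. Viewing $V_s(\tfrac{r}{s}\,\cdot)$ as a row multiplier in $\mult \otimes \C^{1 \times k_s}$ (with $k_s$ the length of $V_s$) and applying Lemma \ref{cyclic-wand}, the wandering dimension $k_r$ of $\nbran F(rL)^{-\|\cdot\|}$ is at most $k_s$. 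Thus $r \mapsto k_r$ is non-decreasing on $(0,R)$; being integer-valued and $\leq n$, it must stabilize past some $\rho \in (0,R)$ at a value $k \in \{0,1,\ldots,n\}$, which establishes the first conclusion.

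For the injectivity claim, fix $\rho < r < s < R$, set $W(L) := V_s(\tfrac{r}{s}L)$ regarded as an element of $\mult \otimes \C^{1 \times k}$, and take its own inner-outer decomposition $W = U \cdot S$. Matching wandering dimensions against $\nbran F(rL)^{-\|\cdot\|}$ forces $U$ to have length $k$, so $S \in \mult \otimes \C^{k \times k}$ is a square outer multiplier. The proof of Lemma \ref{lem:outer_restriction} yields that $S(Z)^*$ is uniformly bounded below on every $r'\cdot\rball$ with $r'<1$, hence $S(Z)$ is surjective and, being square, invertible at every $Z \in \rball$. Any $h \in \nbker S(L)$ then satisfies $S(Z)h(Z) = 0$ for all $Z \in \rball$, forcing $h(Z) = 0$ and, by the NC identity theorem for convergent power series, $h = 0$. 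Injectivity of $S(L)$ together with that of the isometry $U(L)$ yields injectivity of $V_s(\tfrac{r}{s}L)$. The kernel equality $\nbker F(rL) = \nbker T_s(\tfrac{r}{s}L)$ is then immediate from the factorization, using injectivity of $V_s(\tfrac{r}{s}L)$ and surjectivity of $T_s(\tfrac{r}{s}L)$.

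For the ``in particular'' assertion, injectivity of $F(rL)$ combined with the kernel equality makes $T_s(\tfrac{r}{s}L) \colon \hardy \otimes \C^n \to \hardy \otimes \C^k$ both injective and (by Lemma \ref{lem:outer_restriction}) surjective, hence bijective. The right inverse $H(L) \in \mult \otimes \C^{n \times k}$ supplied by Lemma \ref{lem:outer_restriction} is therefore a two-sided multiplier inverse; evaluating the matrix power series identities $T_s(\tfrac{r}{s}\fz)H(\fz) = I_k$ and $H(\fz)T_s(\tfrac{r}{s}\fz) = I_n$ at $\fz = 0$ produces $T_s(0)$ and $H(0)$ as mutually inverse rectangular matrices in $\C^{k \times n}$ and $\C^{n \times k}$, which forces $k = n$, so $T_s(\tfrac{r}{s}L)$ is indeed square and invertible.

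The main obstacle I foresee is the injectivity of the square outer factor $S(L)$ --- a subtlety absent in the scalar case where nonvanishing of the power series suffices. The resolution outlined above converts Lemma \ref{lem:outer_restriction}'s uniform lower bound on $S(Z)^*$ into pointwise invertibility of $S(Z)$ throughout $\rball$ and then applies the NC identity theorem, sidestepping any direct algebraic argument inside $\mult$.
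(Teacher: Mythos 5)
Your proof is correct and follows essentially the same route as the paper: the scaling identity $F(rL)=V_s(\tfrac{r}{s}L)T_s(\tfrac{r}{s}L)$, surjectivity of restricted outers via Lemma \ref{lem:outer_restriction}, monotone stabilization of the wandering dimension of $\overline{\nbran F(rL)}$, and a second inner--outer factorization producing a square outer factor whose pointwise invertibility on $\rball$ forces injectivity. The only (harmless) deviation is your squareness argument via constant terms of the mutually inverse power series, where the paper instead counts cyclic sets for the right shift.
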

\begin{proof}
    Let $0<r < s <R$, then
    \[
    V_r(L) T_r(L) = F(rL) = V_s(\tfrac{r}{s} L) T_s(\tfrac{r}{s} L).
    \]
    Let $w(t)$ be the length of $V_t$, for $0<t<R$. Then $w(t) = \mr{dim} _{\scr{W}} \nbran F(tL) ^{-\| \cdot \|}$ is the wandering dimension of $\nbran F(tL) ^{-\| \cdot \|}$. We first note that $w(t) \leq n$, by the previous lemma, since $F$ is of length $n$. Moreover, since $T_s(\tfrac{r}{s} L)$ is surjective by Lemma \ref{lem:outer_restriction}, we have that 
$$ \nbran V_r(L) = \nbran F(rL) ^{- \| \cdot \|} = \nbran V_s(\tfrac{r}{s} L) ^{- \| \cdot \|}. $$
    Since the dimension of the wandering subspace is at most the number of components, we obtain that $w(r) \leq w(s)$. In other words, the function $w(t)$ is monotonically increasing, bounded, and integer--valued. Therefore, there exists a $\rho >0$, such that for all $0<\rho <r < R$, $w(r)$ is constant.

    For $r > \rho$, we apply the inner--outer decomposition $V_s(\tfrac{r}{s} L) = V (L) H_{s,r} (L)$, where $H_{s,r}$ is outer and square. Observe that any square outer is necessarily injective. Indeed, at every point $Z \in \rball$, we have that $H_{s,r}(Z)$ is surjective and, therefore, invertible. We conclude that any vector in the kernel of $H_{s,r}$ must evaluate to $0$ on $\rball$, and thus, is zero. Therefore, since $V(L)$ is an isometry, $V_s(\tfrac{r}{s} L)$ is injective.

    The final statement then follows as if $V_s (\tfrac{r}{s} L)$ is injective, $T_s (\tfrac{r}{s} L)$ is surjective and $F(rL) = V_s (\tfrac{r}{s} L ) T_s (\tfrac{r}{s} L)$ is injective, we must also have that $T_s (\tfrac{r}{s} L)$ is injective. By the open mapping theorem, $T_s (\tfrac{r}{s} L)$ is then invertible. Any invertible $A (L) \in \mult \otimes \C ^{m \times n}$ must be square, \emph{i.e.} $m=n$. Indeed, $A(L) : \hardy \otimes \C ^n \rightarrow \hardy \otimes \C ^m$ is surjective. Hence $\{ A(L) 1 \otimes e_k \} _{k=1} ^n$ is a $R \otimes I_m-$cyclic set for $\hardy \otimes \C ^m$ and hence $m \leq n$. Conversely, $A(L) ^{-1} : \hardy \otimes \C ^m \rightarrow \hardy \otimes \C ^n$ is also surjective, so $m=n$.
\end{proof}

\begin{lem}{\cite[Appendix]{JMS-ncBSO}} \label{lem:idempotent}
Let $E \in \mult \otimes \C ^{n \times n}$ be idempotent. Then there exists an $S \in \GL _n ( \mult )$ and a $1 \leq k \leq n$ so that 
$$ E (L) = S(L) \, I_{\hardy} \otimes \bpm I_k & 0 \\ 0 & 0_{n-k} \epm S(L) ^{-1}. $$
\end{lem}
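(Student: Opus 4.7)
First I would use that $E(L)$ is a bounded idempotent on $\hardy \otimes \C^n$ to obtain the algebraic direct sum decomposition $\hardy \otimes \C^n = \nbran E(L) \oplus \nbker E(L)$ into two closed subspaces. Because $E \in \mult \otimes \C^{n\times n}$ is a left multiplier, $E(L)$ commutes with each $R_j \otimes I_n$, so both $\nbran E(L)$ and $\nbker E(L) = \nbran (I - E(L))$ are $R \otimes I_n$-invariant closed subspaces of $\hardy \otimes \C^n$. Applying the NC Beurling--Lax theorem of Popescu and Davidson--Pitts then yields left inner multipliers $V \in \mult \otimes \C^{n\times a}$ and $W \in \mult \otimes \C^{n\times b}$ (with $a, b \leq n$ the respective wandering dimensions, bounded by Lemma \ref{cyclic-wand}) such that $V(L)(\hardy \otimes \C^a) = \nbran E(L)$ and $W(L)(\hardy \otimes \C^b) = \nbker E(L)$.

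Next I would assemble the concatenation $S := [V, W] \in \mult \otimes \C^{n \times (a+b)}$. The direct sum decomposition shows $S(L)$ is surjective, and injectivity follows because $V(L) x + W(L) y = 0$ forces $V(L) x = -W(L) y \in \nbran E(L) \cap \nbker E(L) = \{0\}$, and since $V(L)$, $W(L)$ are isometries, $x = y = 0$. By the open mapping theorem $S(L)^{-1}$ is bounded, and since it intertwines the right shifts on the two sides, it is itself a left multiplier.

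To force $a + b = n$, note that because $S(L)$ intertwines $R \otimes I_{a+b}$ with $R \otimes I_n$, it carries $\nbran (R \otimes I_{a+b})$ bijectively onto $\nbran (R \otimes I_n)$, and therefore descends to a linear bijection on the quotients $\hardy \otimes \C^m / \nbran (R \otimes I_m) \cong \C^m$. This induced map is exactly evaluation at the origin, $S(0) : \C^{a+b} \to \C^n$, which forces $a + b = n$; hence $S \in \GL_n(\mult)$. Finally, setting $k := a$ and $P := \bpm I_k & 0 \\ 0 & 0_{n-k} \epm$, the identities $E(L) V(L) x = V(L) x$ on $\nbran E(L)$ (since $E(L)$ is the identity on its range) and $E(L) W(L) y = 0$ on $\nbker E(L)$ yield $E(L) \cdot S(L) = S(L) \cdot (I_{\hardy} \otimes P)$, which rearranges to the stated formula.

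The main obstacle I anticipate is deploying NC Beurling in the invariant-subspace form needed here; the excerpt explicitly records only the inner--outer factorization for multipliers, but the parametrization of closed $R$-invariant subspaces of $\hardy \otimes \C^n$ as ranges of left inner multipliers, with wandering dimension equal to the column count, is the form required, and needs a brief reduction. Verifying $a + b = n$ via the quotient-by-the-range-of-shift argument is the key nonroutine algebraic step.
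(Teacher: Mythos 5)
Your proof is correct, and the matrix $S = (V,W)$ you construct is the same one the paper uses, but your route to its invertibility and squareness is genuinely different. The paper starts from the inner--outer factorizations $E(L) = V(L)F(L)$ and $I - E(L) = W(L)G(L)$ (whose inner parts have precisely your ranges $\nbran E(L)$ and $\nbker E(L)$) and extracts everything algebraically from idempotency: the identity $I = VF + WG$ exhibits $\bsm F \\ G \esm$ as a right inverse of $S$; the relation $E^2 = E$ gives $V(I - FV)F = 0$, hence $FV = I$ by injectivity of the inner $V$ and density of the range of the outer $F$; similarly $GW = I$, and $E(I-E) = 0 = (I-E)E$ gives $FW = GV = 0$, so $\bsm F(L) \\ G(L) \esm$ is an explicit two-sided multiplier inverse and the count $k + j = n$ falls out at once. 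You instead apply the NC Beurling theorem to $\nbran E(L)$ and $\nbker E(L)$ directly (the invariant-subspace form you flag as needing a reduction is exactly the form the paper invokes in Lemma \ref{lem:local_trivialization}, so nothing extra is required), obtain boundedness of $S(L)^{-1}$ from the open mapping theorem, identify it as a multiplier via the Davidson--Pitts intertwiner/commutant theorem, and settle $a + b = n$ by passing to the quotients by $\nbran (R \otimes I_m)$ and observing that the induced map is the bijection $S(0)$ --- a step you carry out correctly. The paper's route buys an explicit formula for $S(L)^{-1}$ in terms of the outer parts and avoids any appeal to the open mapping or commutant theorems; your route is more geometric and applies verbatim whenever one starts from a topological direct sum of two closed right-invariant subspaces rather than from an idempotent, which is exactly the situation arising later in Lemma \ref{lem:local_trivialization}.
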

This lemma was established in \cite[Appendix]{JMS-ncBSO}, we include the proof for completeness. 
\begin{proof}
Consider the inner--outer factorization of $E$ and of the complementary idempotent, $I-E$, $E(L) = V(L) F(L)$, $I-E(L) = W(L) G(L)$. Then, 
\ba I_{\hardy} \otimes I_n & = & E (L) + (I-E(L))  \\
& = & V(L) F(L) + W(L) G(L) = (V(L), W(L) ) \bpm F(L) \\ G(L) \epm. \ea 
Write $S(L) := (V(L), W(L) )$. Since $E(L)^2 = E(L)$, we obtain $V(L)(I_{\hardy} \otimes I_k - F(L) V(L)) F(L) = 0$. Since $V(L)$ is injective and $F(L)$ has dense range, it follows that $F(L) V(L) = I_{\hardy} \otimes I_k$. Similarly, $G(L) W(L) = I_{\hardy} \otimes I_j$. Moreover, since $E(L) (I - E(L)) = 0 = (I - E(L)) E(L)$, we conclude $F(L) W(L) = G(L) V(L) = 0$. Hence, $S(L)$ is invertible with inverse $S(L) ^{-1} = \bsm F(L) \\ G(L) \esm$. In particular, both are square, i.e., $j+k = n$. 

%{\color{purple} I've changed the proof slightly. The thing is that the above display doesn't tell us that $S(L)$ is invertible. Only that it is right invertible, since we don't know that it is square.} All good, thanks! - Rob.

% Hence, $S(L) := (V(L), W(L) )$ is invertible with inverse $S(L) ^{-1} = \bsm F(L) \\ G(L) \esm$.  By Lemma \ref{cyclic-wand}, $V(L) \in \mult \otimes \C ^{n \times k}$, $F(L) \in \C ^{k\times n}$, and $W (L) \in \mult \otimes \C ^{n \times j}$, $G(L) \in \mult \otimes \C ^{j \times n}$. Hence $S(L) : \hardy \otimes \C ^{j+k} \rightarrow \hardy \otimes \C ^n$ is invertible. In particular, the set $\{ S(L) 1 \otimes e_{\ell} \} _{\ell =1} ^{j+k}$ must be $R \otimes I_n-$cyclic, and we conclude that $j+k \geq n$. A symmetric argument applied to $S(L) ^{-1} \in \mult \otimes \C ^{n \times (j+k)}$ implies that $n \leq j+k$ so that $n=j+k$. 

It is now clear that 
\ba S(L) I_{\hardy} \otimes \bpm I_k & 0 \\ 0 & 0_{n-k} \epm S(L) ^{-1} & = & (V(L), W(L))  I_{\hardy} \otimes \bpm I_k & 0 \\ 0 & 0_{n-k} \epm \bpm F(L) \\ G(L) \epm \\
& = & V(L) F(L) +0 = E(L). \ea 
\end{proof}

\begin{remark}
In \cite{AugMarS}, it was proven that $h \in \scr{O} _d$ if and only if $h$ has a jointly compact and quasinilpotent realization $(A,b,c) \in \scr{B} (\cH ) ^d \times \cH \times \cH$, \emph{i.e.} with $A$ a jointly quasinilpotent $d-$tuple so that each $A_j \in \scr{B} (\cH)$ is compact. This also extends to matrices over $\scr{O} _d$. It readily follows that if $E \in \scr{O} _d ^{n\times n}$ is idempotent so is $I-E$ and then $E \in \mult \otimes \C ^{n \times n}$, so that $E (L) = S(L) I_{\hardy} \otimes I_k S(L) ^{-1}$, where $S(L) ^{-1} = \bsm F(L) \\ G(L) \esm$, and each of $F, G$ have jointly compact and quasinilpotent realizations so that $S ^{-1}$ is also NC entire. It further follows, from the realization algorithm, that $S$ will have a jointly compact realization, but it is not clear, a priori, whether $S$ is entire. 

One of our main results, Theorem \ref{thm:entires_semifir}, proves that each $\scr{O} _d (R)$, $R \in (0, +\infty]$, is a semifir, and hence is projective--free. That is, a ring is projective--free if every finitely generated projective module over the ring is a free module. This implies, in particular, that any idempotent $E \in \scr{O} _d (R) ^{n \times n}$ has such a factorization. Indeed, given such an $E$, we have that $\nbran E$ and $\nbran I-E$ are both finitely--generated projective right $\scr{O} _d (R)-$modules contained in $\scr{O} _d (R) ^n$ obeying $\scr{O} _d (R) ^n = \nbran E + \nbran I-E$ and $\nbran E \cap \nbran I-E = \{ 0 \}$. Since $\nbran E$, $\nbran I-E$ are fintely--generated and $\scr{O} _d (R)$ is semifir, these are both free right $\scr{O} _d (R)-$modules of finite rank. Choosing a basis $\{ f_1, \cdots, f_k \}\subseteq \scr{O} _d (R)$ for $\nbran E$ and a basis $\{ f_{k+1}, \cdots ,f_n \} \subseteq \scr{O} _d (R)$ for $\nbran I-E$, it follows that $\{ f_i \} _{i=1} ^n$ is a basis for $\scr{O} _d (R) ^n$. Defining the matrix $S \in \scr{O} _d (R) ^{n\times n}$ by 
$$ S = \bpm \left. f_1 \right| \cdots \left| f_k \right| \left. f_{k+1} \right| \cdots \left| f_n \right. \epm, $$ \emph{i.e.} if $1 \otimes e_j$ denotes the standard basis of $\scr{O} _d (R) ^n$, then $S 1 \otimes e_j =: f_j$, then $S$ is necessarily bijective, hence invertible by the open mapping theorem. Hence, 
$$ E = S \begin{pmatrix} I_k &0 \\ 0 & 0 \end{pmatrix} S^{-1}, $$ where $S, S^{-1} \in \scr{O} _d (R) ^{n \times n}$. Here, note that $S^{-1}$ maps a basis for $\scr{O} _d (R) ^n$ onto the standard basis so that $S^{-1} \in \scr{O} _d (R) ^{n\times n}$ as well. This applies, in particular, to $\scr{O} _d = \scr{O} _d (+ \infty)$.
\end{remark}

\begin{lem} \label{lem:invertibles_preserve_wan_dim}
Let $S \in \GL_n(\bH^{\infty}_d)$ and let $\hardy \otimes \C^n = \cH \dotplus \cK$ be a decomposition into a non-orthogonal closed direct sum of right invariant subspaces. Then, $\dim \scr{W}_{\cH} + \dim \scr{W}_{\cK} = n$, $\dim \scr{W}_{S\cH} = \dim \scr{W}_{\cH}$, and $\dim \scr{W}_{S\cK} = \dim \scr{W}_{\cK}$.
\end{lem}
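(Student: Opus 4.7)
My plan is to reduce everything to the following preservation principle: for every $S \in \GL_n(\mult)$ and every closed, right $R$-invariant subspace $\cM \subseteq \hardy \otimes \C^n$, one has $\dim \scr{W}_{S\cM} = \dim \scr{W}_{\cM}$. The equalities $\dim \scr{W}_{S\cH} = \dim \scr{W}_{\cH}$ and $\dim \scr{W}_{S\cK} = \dim \scr{W}_{\cK}$ are then immediate instances. For the sum rule, I would observe that the idempotent $E$ onto $\cH$ along $\cK$ is bounded (by the closed graph theorem, since $\cH + \cK = \hardy \otimes \C^n$ is closed and $\cH \cap \cK = \{0\}$) and commutes with every $R_i \otimes I_n$ because $\cH$ and $\cK$ are right $R$-invariant. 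Hence $E$ lies in the commutant of $\{R_i \otimes I_n\}_{i=1}^d$, which in the Popescu / Davidson--Pitts NC Hardy space framework is the ampliated left multiplier algebra $\mult \otimes \Mn$. Lemma \ref{lem:idempotent} then produces $T \in \GL_n(\mult)$ and some $1 \leq k \leq n$ with
\[
\cH = T(\hardy \otimes \C^k \oplus 0), \qquad \cK = T(0 \oplus \hardy \otimes \C^{n-k}).
\]
The two block subspaces $\hardy \otimes \C^k \oplus 0$ and $0 \oplus \hardy \otimes \C^{n-k}$ have wandering subspaces $1 \otimes \C^k \oplus 0$ and $0 \oplus 1 \otimes \C^{n-k}$, of dimensions $k$ and $n-k$; applying the preservation principle to $T$ yields $\dim \scr{W}_{\cH} + \dim \scr{W}_{\cK} = k + (n-k) = n$.

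For the preservation principle, I would argue by a two-sided application of Lemma \ref{cyclic-wand} combined with the NC Beurling theorem of Popescu and Davidson--Pitts. Factor $\cM = V(\hardy \otimes \C^j)$ with $V \in \mult \otimes \C^{n \times j}$ inner and $j = \dim \scr{W}_{\cM}$. Since $V$ has closed range and $S$ is a Banach-space isomorphism of $\hardy \otimes \C^n$,
\[
S\cM = SV(\hardy \otimes \C^j) = \overline{\nbran(SV)}, \qquad SV \in \mult \otimes \C^{n \times j}.
\]
Because $SV$ has only $j$ columns, Lemma \ref{cyclic-wand} gives $\dim \scr{W}_{S\cM} \leq j$. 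Repeating the argument with $S^{-1} \in \GL_n(\mult)$ applied to the Beurling factorization of $S\cM$ delivers the reverse inequality, hence equality.

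The only genuinely technical ingredient is the identification of the commutant of $\{R_i \otimes I_n\}_{i=1}^d$ with $\mult \otimes \Mn$, which is standard in the Popescu / Davidson--Pitts NC Hardy space setting. Beyond that, the argument is a clean composition of Lemma \ref{lem:idempotent}, the NC Beurling theorem, and Lemma \ref{cyclic-wand}; the main obstacle, such as it is, is just the careful bookkeeping needed to squeeze the wandering dimensions from both sides using $S$ and $S^{-1}$ and to confirm that $S\cM$ really is $\overline{\nbran(SV)}$ and not merely a subspace of it.
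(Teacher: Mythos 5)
Your proof is correct and follows essentially the same route as the paper: the sum rule comes from the bounded idempotent onto $\cH$ along $\cK$, the Davidson--Pitts commutant theorem, and Lemma \ref{lem:idempotent}, while the preservation of wandering dimension is the two-sided observation that $S$ (resp.\ $S^{-1}$) carries a generating set of a right-invariant subspace to a generating set of its image, which bounds the wandering dimension via Lemma \ref{cyclic-wand}. The paper's own proof is just a terser version of this, deferring the first claim to Lemma \ref{lem:idempotent} and the cited appendix, and stating the second as ``$S\scr{W}_{\cH}$ generates $S\cH$ by right shifts.''
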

In the above statement, we have introduced the notation, $\dotplus$, for a non-orthogonal, topological direct sum of closed subspaces. Namely, given subspaces $\cH, \cJ \subseteq \cK$, we write $\cK = \cH \dotplus \cJ$ if $\cH, \cJ$ are closed subspaces with trivial intersection, $\cH \cap \cJ = \{ 0 \}$, so that $\cK$ is spanned by $\cH$ and $\cJ$, $\cK = \cH + \cJ$.
\begin{proof}
The first claim is proved in \cite[Appendix]{JMS-ncBSO} and in Lemma \ref{lem:idempotent} above. For the second claim, we note that $S \scr{W}_{\cH}$ generates $S \cH$ by right shifts. Therefore, $\dim \scr{W}_{S \cH} \leq \dim \scr{W}_\cH$. Now applying this inequality to $S^{-1}$, we get the equality. Similarly, for $\cK$.
\end{proof}

The following lemma is a variation of \cite[Lemma 5.2]{JMS-ncBSO}.
\begin{lem}
Let $R \in (0,\infty]$ and $F \in \scr{O}_d(R) ^{1\times n}$. Then, $\dim \scr{W}_{\nbker F(rL)}$ is an increasing function of $r \in (0,R)$.
\end{lem}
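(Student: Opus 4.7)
The plan is to reduce the problem to the analysis of a fixed outer multiplier via Lemma \ref{lem:inner_restriction}, then combine a gauge--rescaling intertwiner with the additivity of wandering dimensions from Lemma \ref{lem:invertibles_preserve_wan_dim} to establish the monotonicity.

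Given $0 < r < s < R$, I would first pick $s$ above the stabilization threshold $\rho$ supplied by Lemma \ref{lem:inner_restriction}, so that $\nbker F(rL) = \nbker T_s(\tfrac{r}{s} L)$ and $\nbker F(sL) = \nbker T_s(L)$, where $T_s \in \mult \otimes \C^{w \times n}$ is the outer factor of $F(sL)$ and $w = w(s)$. Writing $G := T_s$ and $t := r/s \in (0,1)$, the problem becomes comparing $\dim \scr{W}_{\nbker G(tL)}$ with $\dim \scr{W}_{\nbker G(L)}$ for a fixed outer multiplier $G$.

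To interpolate between these two kernels I would use the contractive diagonal gauge $\tau_t$ on $\hardy \otimes \C^n$ defined on the standard basis by $\tau_t(\fz^\omega \otimes e_j) := t^{|\omega|} \fz^\omega \otimes e_j$. A direct calculation yields the intertwining identities $\tau_t \, G(L) = G(tL) \, \tau_t$ and $\tau_t R_j = t R_j \tau_t$, so that $\tau_t$ restricts to a bounded injection of $\nbker G(L)$ into $\nbker G(tL)$ whose image is a right-shift invariant subspace carrying the same Beurling wandering-dimension count. Next, since $G$ is outer and $t < 1$, Lemma \ref{lem:outer_restriction} combined with Popescu's corona theorem supplies a right inverse $H_t(L) \in \mult \otimes \C^{n \times w}$ of $G(tL)$, producing the non-orthogonal right-invariant direct sum $\hardy \otimes \C^n = \nbker G(tL) + H_t(L)(\hardy \otimes \C^w)$. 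Lemma \ref{lem:invertibles_preserve_wan_dim} then gives $\dim \scr{W}_{\nbker G(tL)} + \dim \scr{W}_{H_t(L)(\hardy \otimes \C^w)} = n$, while Lemma \ref{cyclic-wand} bounds the complementary wandering dimension by $w$.

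Comparing these identities and bounds across the range $t \in (0,1]$ and invoking the monotone integer-valued behavior of the inner-factor length $w(r)$ from Lemma \ref{lem:inner_restriction} yields the desired inequality on the stable range $r, s > \rho$. For the residual range $r \leq \rho$, the same argument applies with a different choice of $s$, since Lemma \ref{lem:inner_restriction} allows the outer factor to be placed at any $s$ above $r$. The principal obstacle will be the boundary case $t = 1$, where $G(L)$ is outer but not necessarily surjective, so the corona-based splitting is unavailable and the wandering dimension of $\nbker G(L)$ must be controlled by passing to a limit $t_n \nearrow 1$; a secondary technical point is verifying that the gauge-injection step truly preserves wandering dimension even though $\tau_t$ is merely a contraction, which requires a careful quotient-space computation rather than an orthogonal one.
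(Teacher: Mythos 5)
You have the right interpolation tool --- the rescaling map $\tau_{r/s}$ (the paper's $\Phi_{r/s}$), which intertwines $F(sL)$ with $F(rL)$ and the right free shifts --- but the argument you assemble around it does not close, and the one computation that would close it is exactly the step you set aside. The corona splitting $\hardy\otimes\C^n = \nbker G(tL) \dotplus \nbran H_t(L)$ together with Lemmas \ref{lem:invertibles_preserve_wan_dim} and \ref{cyclic-wand} yields only $\dim\scr{W}_{\nbker G(tL)} = n - \dim\scr{W}_{\nbran H_t(L)} \geq n - w$, a lower bound that is uniform in $t$; nothing in your proposal compares $\dim\scr{W}_{\nbran H_t(L)}$ at two different values of $t$, so no monotonicity in $t$ can follow from ``comparing these identities and bounds.'' The substance of the paper's proof is precisely the ``careful quotient-space computation'' you defer: take $W := \Phi_{r/s}\bigl(\scr{W}_{\nbker F(sL)}\bigr)$, observe that it is a generating set for the closed right-invariant subspace $\cH := \overline{\Phi_{r/s}(\nbker F(sL))}$ of $\nbker F(rL)$, and show that the orthogonal projection $P$ of $\cH$ onto $\scr{W}_{\cH}$ satisfies $\scr{W}_{\cH} = \overline{PW}$, by pairing a vector $u \in \scr{W}_{\cH}\ominus PW$ against $R^{\alpha}w$, splitting $w = Pw + (1-P)w$, and using that $\cH\ominus\scr{W}_{\cH}$ is $R$-invariant while $R^{\alpha *}u = 0$ for $\alpha\neq\emptyset$. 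This bounds the wandering dimension of the image by that of the source and is essentially the entire proof; your proposal never carries it out.

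Two further points. The reduction to the outer factor $T_s$, and your claim that ``the same argument applies'' for $r\leq\rho$, both rest on the identity $\nbker F(rL) = \nbker T_s(\tfrac{r}{s}L)$, which Lemma \ref{lem:inner_restriction} supplies only for $r>\rho$; below the threshold $V_s(\tfrac{r}{s}L)$ need not be injective and one only gets a containment, so the residual range is not handled. The paper avoids the outer factor, the corona theorem, and the threshold $\rho$ altogether by working with $F$ directly. Finally, the $t=1$ boundary case you flag as the principal obstacle never arises: the lemma compares two radii $0<r<s<R$, so one always works with $t=r/s<1$ and no limit $t_n\nearrow 1$ is needed.
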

\begin{proof}
Let $0 < r < s < R$. Then the map $\Phi_{r/s}$ will map $F(sL)$ to $F(rL)$. Therefore, for every $h \in \nbker F(sL)$, we have that $\Phi_{r/s}(h) \in \nbker F(rL)$. Hence, $\cH := \overline{\Phi_{r/s}(\nbker F(sL))}$ is a closed right-invariant subspace of $\nbker F(rL)$. In particular, $\cW := \Phi_{r/s}(\scr{W}_{\nbker F(sL)})$ is a generating set for $\cH$. Let $P$ be the projection onto $\scr{W}_{\cH}$. Let $u \in \scr{W}_{\cH} \ominus P \cW$. Fix an arbitrary $w \in \cW$. Then,
\[
\langle u, R^{\alpha} w \rangle = \langle u, R^{\alpha} P w \rangle + \langle u, R^{\alpha} (1 -P) w \rangle = 0.
\]
Since $\cH \ominus \scr{W}_{\cH}$ is $R-$invariant, the second summand on the left is $0$. Since $R^{\alpha*} u = 0$ for every non-empty $\alpha$ and $u \perp w$ by assumption, the first summand is also $0$. Finally, since $\cW$ is a generating set, $u = 0$, and therefore $\scr{W}_{\cH} = \overline{P \cW}$.
\end{proof}

\begin{remark}
In \cite[Appendix]{JMS-ncBSO}, it is proved that $\bH^{\infty}_d$ is a projective--free ring. However, it is also claimed incorrectly in \cite[Appendix]{JMS-ncBSO} that $\bH^{\infty}_d$ is a semifir. Indeed, this fails even for $d=1$, as can be seen in \cite[Beispiel 2]{vonRenteln}. Namely, if we take $f(z) = e^{\frac{z+1}{z-1}}$ which is a (singular) inner function and $g(z) = 1 -z$, an outer function, then by the corona theorem $f$ and $g$ do not generate the trivial ideal. Moreover, the ideal generated by $f$ and $g$ is not principal. The reason is that if $h = h_i \cdot h_o$ is the inner--outer factorization of an $h$ that divides both $f$ and $g$, then we since $f = h a$, $h_o$ must be invertible since $f$ has no outer part. On the other hand, since $g = h b$, $h_i = 1$ since $g$ has no non-trivial inner part. Hence, $h$ is a unit, but this contradicts the fact that the ideal generated by $f$ and $g$ is not trivial.

As we will see, the obstruction that prevents $\mult$ from being a semifir is that it is generally not possible to ``trivialize" row outers that have dense range in the sense of \cite[Section 2.3 and Theorem 2.3.1]{Cohn}. In this paper, we will see that for outers this problem can be overcome by decreasing the radii of the balls. Namely, any such row outer on a row-ball of a given radius can always be trivialized on any ball of strictly smaller radius. Even though $\mult$ is not a semifir, it would be interesting to determine whether $\bH^{\infty}_d$ is a Sylvester domain, which is a larger class of rings than semifirs that also have universal skew fields of fractions \cite[Section 5.5]{Cohn}.
\end{remark}

\section{Topologies on rings of analytic NC functions} \label{sec:topology}

Classically, the ring of holomorphic functions on a domain in $\C ^d$ has a natural Fr\'echet topology that arises from a compact exhaustion of the domain. Every two such exhaustions give rise to the same topology. Moreover, the resulting topological algebra is locally multiplicatively convex (LMC) \cite{Michael-lmc_book} and nuclear. One can view such algebras as inverse limits of function algebras. The quantized version of a function algebra is an operator algebra and a theory of quantized topological vector spaces was explored by Effros and Webster in \cite{EffWeb,Webster}. Later in Section \ref{sec:application} we will apply this theory of Effros and Webster to define a class of topological ``local operator algebras" which admit well-defined evaluations of the universal skew field, $\scr{M} _d$, of $\scr{O} _d$. 

Our algebras $\scr{O}_d$ and $\scr{O}_d (R)$, $R>0$, admit a natural topology given by the family of semi-norms $$\rho_r(f) = \sup _{0<r <R} \left\{\|f(X)\| \left| X \in r \cdot \overline{\rball} \right. \right\}.$$  It is easily seen that the rings $\scr{O}_d (R)$, $R \in (0, +\infty]$, are complete with respect to this topology. We can view these rings as inverse limits of certain operator algebras. Namely, let us write $\A (r)$ for the closure of the free algebra viewed as functions on $r \cdot \rball$ and equipped with the supremum (matrix) norm topology on this row-ball. (In the case where $r=1$ this recovers the \emph{non-commutative disk algebra}, $\A$, as introduced by Popescu and we will often call $\A (r)$ the free disk algebra on the NC row-ball of radius $r$.) We also let $\mult (r)$ denote the ring of all NC functions that are uniformly bounded in $r \cdot \rball$. This can also be viewed through the lens of operators. For every $r >0$, it is easy to see that $\rho_r(f) = \|f(rL)\|_{\scr{B} (\hardy)}$. Hence, the restriction maps are essentially given by evaluating the free FPS of $f$ at the operator $d-$tuple $rL = (rL_1, \cdots, rL_d)$.

Let $\bH^2_d(r)$ be the $H^2$-space of the ball $r \cdot \rball$. This is defined as the Hilbert space of all free FPS, 
$$ f (\fz )  =  \sum _{\om \in \word} \hat{f} _\om \, \fz ^\om = \sum _{\ell =0} ^\infty \underbrace{\sum _{|\om | = \ell} \hat{f} _\om \, \fz ^\om,}_{=: f _\ell (\fz )} $$ 
with Hilbert space norm given by 
$$ \|f\|_r^2 = \sum_{\ell =0}^{\infty} r^{2\ell} \|f_\ell \|_{\hardy} ^2, $$ and where $\| f _\ell \|_{\hardy}$ is the norm of the $\ell-$th homogeneous component in $\bH^2_d = \hardy (1)$. For any $r>0$, the Hardy algebra, $\mult (r)$, defined above, can be identified with the left multiplier algebra of $\hardy (r)$, and $\A (r)$ can be equivalently defined as the operator--norm closure of the free algebra in $\mult (r)$.  The Hardy algebra, $\mult (r)$ is then the weak operator topology (WOT) closure of $\A (r)$ in $\scr{B} (\hardy (r))$. Also note that for any $0<r<s$, the re-scaling map, $F (Z) \mapsto F(\tfrac{r}{s} Z)$, defines an embedding of $\mult (s)$ into $\mult (r)$ and of $\A (s)$ in $\A (r)$, both with dense ranges. (Albeit, in the former case, in the weak* topology of $\scr{B} (\hardy (r))$.) In particular, $F \in \mult (r)$ if and only if $F(rL) \in \mult = \mult (1)$.

For $r < s$, there are natural restriction maps $\A(s ) \hookrightarrow \A(r)$ and $\bH^2_d(s) \hookrightarrow \bH^2_d(r)$ which we will denote by $\pi _{r,s}$. This gives two families of topological rings, $\{ \A (r) | \ r \in (0, \infty) \}$, and $\{ \hardy (r) | \ r \in (0, \infty ) \}$ equipped with continuous mappings $\pi _{r,s}$, $0<r<s<\infty$, given by restriction to the NC row-ball of radius $r<s$. For convenience, given any $R \in (0, +\infty ]$, we fix a strictly increasing sequence $(r_n)_{n=1} ^\infty$ of positive radii so that $\lim _{n \uparrow \infty} r_n = R$. In the case where $R = +\infty$ we will simply choose $r_n = n \in \N$. We will also write $\pi _{m,n} := \pi _{r_m, r_n} : \A (r_n ) \hookrightarrow \A (r_m)$, $m<n$, for the continuous restriction map. Following \cite{Arens-dense_lim}, the \emph{inverse limit} of $\{ \A (r_n ) \}_{n=1} ^\infty$ is then a topological ring defined as follows. First, consider the Cartesian product space
$$ \prod _{n=1} ^\infty \A (r_n), $$ equipped with the product topology and component-wise multiplication making it a topological ring. The \emph{inverse limit ring}, $\varprojlim \A(r_m)$, of the family $\{ \A (r_n ) \}$, is then defined as the topological sub-ring of $\prod _{n=1} ^\infty \A (r_n)$ consisting of all sequences $(a_n)_{n=1} ^\infty$, $a_n \in \A (r_n)$, obeying $a_n = \pi _{n, n+1} (a_{n+1})$ and equipped with the subspace topology it inherits from the Cartesian product $\prod \A (r_n)$. The inverse limit of the family $\{ \hardy (r_n ) \}$ is defined similarly. Observe that for any $n \in \N$, the restriction map $\pi _{n,n+1} : \A (r_{n+1}) \hookrightarrow \A (r_n)$ is a completely contractive and unital homomorphism of operator algebras with dense range. Indeed, any $a \in \A (r)$ is, by definition, the supremum-norm limit of free polynomials in $r \cdot \rball$. Hence $\fp$ is dense in $\A (r)$ for any $r>0$ which implies that $\pi _{r,s} : \A (s) \hookrightarrow \A (r)$ has dense range for any $0<r<s$. That is, $\varprojlim \A(r_m)$ is a \emph{dense inverse limit ring} in the sense of Arens \cite{Arens-dense_lim}. 

\begin{lem} \label{lem:inverse_limit}
For any $R \in (0,+\infty]$, we have two presentations of $\scr{O}_d (R) $ as inverse limits
\[
\scr{O} _d (R) = \varprojlim_{m \to \infty} \A(r_m) \text{ and } \scr{O} _d (R) = \varprojlim_{m \to \infty} \bH^2_d(r_m).
\]
The maps are the restriction maps defined above, and $0 < r_m < R$ is any strictly increasing sequence with limit $R$.
\end{lem}
\begin{proof}
Consider $R = +\infty$. The first presentation is clear since the topology on $\scr{O}_d$ is given by the supremum norms on balls of increasing radius. For the second one, we note that if a function $f$ extends analytically beyond the ball, then there is a bound on the $\bH^{\infty}_d$ norm by $\bH^2_d$ norm, see Lemma \ref{lem:restriction_H^2_d}. The same proof works for the claim on $\scr{O}_d(R)$, $R>0$.
\end{proof}
Moreover, by the discussion above, we obtain the following: 
\begin{lem}
Given any $R \in (0, +\infty ]$, $\scr{O} _d (R)$ is a complete dense inverse limit topological ring. 
\end{lem}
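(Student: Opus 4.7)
The plan is to use the previous lemma, which identifies $\scr{O}_d(R)$ with $\varprojlim_{m} \A(r_m)$, and then separately verify the two remaining adjectives: \emph{complete} and \emph{dense} (in Arens' sense).

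For density, I would argue that each restriction map $\pi_{n,n+1} \colon \A(r_{n+1}) \to \A(r_n)$ has dense range. By definition, $\A(r)$ is the operator-norm closure of the free algebra $\fp$ viewed as functions on $r \cdot \rball$, for any $r>0$. Thus $\fp$ sits inside $\A(r_{n+1})$ and its image under $\pi_{n,n+1}$ is again $\fp$, which is dense in $\A(r_n)$. So $\pi_{n,n+1}$ has dense range, which is exactly the ``dense inverse limit" condition of Arens \cite{Arens-dense_lim}.

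For completeness, I would observe that each $\A(r_m)$ is a Banach algebra, hence complete in its norm topology, and therefore the Cartesian product $\prod_{m=1}^{\infty} \A(r_m)$ equipped with the product topology is complete as a topological ring. The inverse limit $\varprojlim \A(r_m)$ is the subset carved out by the countable family of conditions $a_n = \pi_{n,n+1}(a_{n+1})$; since each $\pi_{n,n+1}$ is continuous, each such condition cuts out a closed subset, and the intersection of closed subsets is closed. A closed subspace of a complete topological ring is complete in the inherited topology, which gives completeness of $\scr{O}_d(R)$.

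I would also briefly note the consistency between the intrinsic Fr\'echet topology on $\scr{O}_d(R)$ generated by the seminorms $\rho_{r_m}(f) = \|f(r_m L)\|$ and the inverse limit topology: the identification $f \leftrightarrow (f|_{r_m \cdot \rball})_m$ sends the seminorm $\rho_{r_m}$ to the pullback of the norm on the $m$th factor $\A(r_m)$, and these pullback seminorms generate precisely the subspace topology inherited from $\prod \A(r_m)$. There is no real obstacle in this proof; the only point requiring a moment of care is checking that the topology on $\scr{O}_d(R)$ as defined by the $\rho_{r_m}$ seminorms coincides with the inverse limit topology (so that ``complete" is unambiguous), but this is immediate since the identification $\scr{O}_d(R) \cong \varprojlim \A(r_m)$ of Lemma \ref{lem:inverse_limit} is a homeomorphism onto its image by construction.
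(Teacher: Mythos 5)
Your proposal is correct and follows essentially the same route as the paper: the density of the connecting maps comes from the density of $\fp$ in each $\A(r_m)$ (exactly as in the discussion preceding the lemma), and completeness is the routine verification the paper leaves as "easily checked," which you carry out via the standard observation that the inverse limit is a closed subring of the complete product $\prod_m \A(r_m)$. No gaps.
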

\begin{proof}
It is easily checked that $\scr{O} _d (R)$ is complete with respect to the topology of uniform convergence on every NC sub-ball, $r \cdot \rball$, $0<r <R$.
\end{proof}

Each of the algebras, $\scr{O} _d (R)$, satisfy a version of the classical Montel theorem. Though several Montel-type theorems exist in non-commutative analysis (see, for example \cite[Theorem 13.14]{AgMcY} and \cite[Theorem 5.2]{Pop-freeholo}), we provide a slightly different proof here for the sake of completeness. Recall from \cite[Definition 34.2]{Treves-top_vec_sp} that a topological vector space $\cE$ is called a \emph{Montel space}, if it is Hausdorff, barrelled, and every closed bounded subset is compact. By \cite[Corollary 3 in Section 50]{Treves-top_vec_sp}, every nuclear Fr\'echet space is Montel. In a locally convex space, boundedness is equivalent to boundedness in each of the seminorms defining the topology. In particular, for a Fr\'echet space a bounded sequence has a convergent subsequence by metrizability. Thus, we obtain a template for Montel-type theorems.

\begin{thm} \label{thm:montel}
The space $\scr{O}_d$ is a nuclear Fr\'echet space. In particular, if $f_n \in \scr{O}_d$ is a sequence of entire NC functions that are bounded on NC row-balls of finite radii, then $f_n$ has a convergent subsequence.
\end{thm}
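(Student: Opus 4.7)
The plan is as follows. First, observe that $\scr{O}_d$ is a Fr\'echet space: its topology is defined by the countable increasing family of seminorms $\{\rho_m\}_{m \in \N}$ and is complete, as already noted in Section~\ref{sec:topology}. By Lemma~\ref{lem:inverse_limit}, the same topology is generated by the sequence of Hilbert--space seminorms $\{\|\cdot\|_m\}_{m \in \N}$, whose completions are exactly the spaces $\hardy(m)$.

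To prove nuclearity, I will verify the standard criterion that for every $m \in \N$ there exists $m' > m$ such that the canonical map $\hardy(m') \to \hardy(m)$ is nuclear (trace class). By construction, $\{ r^{-|\omega|} \fz^\omega : \omega \in \word\}$ is an orthonormal basis of $\hardy(r)$ for every $r > 0$, so for $0 < r < s$ the inclusion $\hardy(s) \hookrightarrow \hardy(r)$ is diagonal in these two bases with singular value $(r/s)^{|\omega|}$ on the $\omega$-th basis vector. Its trace-class norm is therefore
$$ \sum_{\omega \in \word} (r/s)^{|\omega|} \; = \; \sum_{n=0}^{\infty} d^n (r/s)^n \; = \; \frac{1}{1 - dr/s}, $$
which is finite precisely when $s > dr$. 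Consequently, choosing $m' > d \cdot m$ for each $m$ produces a nuclear restriction map, and $\scr{O}_d$ is realized as a countable projective limit of Hilbert spaces with nuclear connecting maps, hence is nuclear Fr\'echet.

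The Montel consequence is then immediate from the classical theorem that every nuclear Fr\'echet space is Montel (e.g.\ \cite[Corollary~3, Section~50]{Treves-top_vec_sp}): closed bounded sets are compact. A sequence $(f_n)$ that is bounded on every $r \cdot \overline{\rball}$ is bounded in every continuous seminorm $\rho_r$ and hence is a bounded subset of the Fr\'echet space $\scr{O}_d$; by metrizability, such a bounded sequence admits a convergent subsequence. The only step requiring any genuine input is the singular-value computation for the Hilbert-space inclusions, and the point there is simply that the geometric decay $(r/s)^{|\omega|}$ of the singular values eventually beats the $d^n$ combinatorial growth in the number of words of length $n$---which is exactly what the condition $s > dr$ captures.
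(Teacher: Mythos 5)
Your proposal is correct and follows essentially the same route as the paper: realize $\scr{O}_d$ as the projective limit of the Hilbert spaces $\hardy(r)$, observe that the connecting map is diagonal on monomials with singular values $(r/s)^{|\omega|}$ so that its trace norm $\sum_n d^n(r/s)^n$ is finite once $s > dr$, pick radii growing geometrically with ratio exceeding $d$, and invoke nuclear $\Rightarrow$ Montel. Your computation is in fact slightly cleaner than the paper's (which states the trace-class condition as ``$r < d$'' where it means $r < 1/d$), but it is the same argument.
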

\begin{proof}
By \cite[7.3 Corollary 3]{Schaefer-top_vec_sp}, a Fr\'echet space is nuclear if it is an inverse limit of Hilbert spaces with trace-class connecting maps. In the case of $\scr{O} _d = \varprojlim \bH^2_d(r_m)$, where $r_m >0$ is any strictly increasing sequence that diverges to $+\infty$, these connecting maps are the restriction maps $\pi _{m, m+1} = \pi _{r_m, r_{m+1}}$. Given any $0<r<1$, consider the linear re-scaling map $\Phi _r : \hardy \rightarrow \hardy$ defined by $\Phi _r f = f_r$, $f_r (Z) := f(rZ)$. We note that since $\bH^2_d = \oplus_{k=0}^{\infty} \C _k \langle \fz \rangle$, where $\C _k \langle \fz \rangle$ is the space of homogeneous free polynomials of degree $k$, the map $\Phi_r$ acts as multiplication by $r^k$ on each homogeneous component, $\C _k \langle \fz \rangle$. Hence, $\Phi_r$ is trace class, provided that $r \cdot d <1$, since $\nbdim \C _k \langle \fz \rangle = d^k$. Choose a cofinal system of radii, $r_m$, and hence a cofinal system of semi-norms associated to NC row-balls of those radii, such that the radii grow exponentially, with exponent $>d$. Since $\pi _{m, m+1}$ can be identified with $\Phi _r$, $r := \tfrac{t_m}{t_{m+1}}$, it follows that $\scr{O}_d$ is the inverse limit of a system of Hilbert spaces with trace-class maps, as desired.
\end{proof}

\begin{remark}
    By \cite[Theorem 7.4]{EffWeb}, $\scr{O}_d$ has a unique quantization as a \emph{local operator space}. Later in \ref{sec:application} we will show that since $\scr{O} _d$ is an inverse limit of operator algebras, it can be viewed as a \emph{local operator algebra}.
\end{remark}

One can see here that the fact that we have an infinite radius of convergence for any element of $\scr{O} _d$ is important in the proof of Theorem \ref{thm:montel}. In fact, the algebras $S(r)$ introduced by Taylor (see Remark \ref{Taylorloc}), are not nuclear unless all but at most one of the radii are infinite \cite{Luminet-PI}. It seems that the same reasoning applies in our case of $\scr{O} _d (r)$, $0<r<+\infty$, and so we prove a weaker result for these rings.

\begin{thm} \label{thm:R_montel}
For every $R > 0$, the space $\scr{O}_d(R)$ is a Fr\'echet--Montel space.
\end{thm}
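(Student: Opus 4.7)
The plan is to follow the template of Theorem \ref{thm:montel}, using the inverse limit presentation $\scr{O}_d(R) = \varprojlim_k \hardy(r_k)$ from Lemma \ref{lem:inverse_limit}, for a fixed strictly increasing sequence $r_k \uparrow R$. The crucial difference from the $R = +\infty$ case is that nuclearity is unavailable: trace-class connecting maps would require $r_{k+1} > d \cdot r_k$, which is incompatible with an increasing sequence bounded above by a finite $R$. Fortunately, Fr\'echet--Montel only needs the connecting maps to be \emph{compact}, which is always achievable.

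The key computation is that each restriction map $\pi_{k,k+1} \colon \hardy(r_{k+1}) \to \hardy(r_k)$ is a compact operator between Hilbert spaces. Indeed, the normalized monomials $\{r_j^{-|\omega|} \fz^\omega\}_{\omega \in \word}$ form an orthonormal basis of $\hardy(r_j)$, and in these bases $\pi_{k,k+1}$ is diagonal with singular values $(r_k/r_{k+1})^{|\omega|}$, where each value $(r_k/r_{k+1})^\ell$ appears with multiplicity $d^\ell$. Since $r_k/r_{k+1} < 1$, these singular values (arranged in any order) converge to zero, so $\pi_{k,k+1}$ is compact (although in general not of trace class, or even Hilbert--Schmidt, unless $r_{k+1}/r_k > \sqrt{d}$).

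Given this compactness, it suffices to show that every bounded sequence in $\scr{O}_d(R)$ has a convergent subsequence. Let $\{f_n\}$ be bounded, so $\rho_{r_k}(f_n) = \|f_n(r_k L)\| \leq M_k$ for each $k$. Since $\|f_n\|_{\hardy(r_k)} = \|f_n(r_k L)\, 1\|_{\hardy} \leq \rho_{r_k}(f_n)$, the sequence is bounded in every $\hardy(r_k)$. A standard diagonal argument, extracting a subsequence at level $k+1$ convergent under $\pi_{k,k+1}$ for each $k$ in turn, produces $\{f_{n_j}\}$ which is Cauchy in every $\hardy(r_k)$. A rescaled application of Lemma \ref{lem:restriction_H^2_d} yields $\rho_r(f) \leq \|f\|_{\hardy(r_k)}/\sqrt{1 - (r/r_k)^2}$ whenever $r < r_k$, so $\{f_{n_j}\}$ is Cauchy in every defining seminorm of $\scr{O}_d(R)$ and converges by completeness.

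The main obstacle is conceptual rather than technical: one must accept the weaker regularity of the connecting maps and recognize that compactness (rather than nuclearity) still suffices for Montel. More succinctly, one can invoke the general fact that any countable projective limit of Banach spaces with compact connecting maps is a \emph{Fr\'echet--Schwartz} space, and Fr\'echet--Schwartz spaces are automatically Fr\'echet--Montel; this bypasses the diagonal argument but yields the same conclusion.
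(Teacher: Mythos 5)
Your proof is correct and follows essentially the same route as the paper: the key step in both is that the restriction map $\hardy(r_{k+1}) \to \hardy(r_k)$ is a compact (diagonal) operator with entries $(r_k/r_{k+1})^{|\omega|} \to 0$. The only difference is that the paper invokes a criterion of Gelfand--Shilov to pass from compactness of the connecting maps to the Montel property, whereas you carry out the diagonal subsequence extraction (or cite the Fr\'echet--Schwartz fact) explicitly.
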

\begin{proof}
By \cite[I.6.2 Theorem]{GelShil-gen_func_vol2}, it suffices to show that if we have a set bounded in the norm induced from $\bH_2^d(r_{m+1})$, then it contains a Cauchy subsequence in the norm from $\bH^2_d(r_m)$. However, this follows immediately from the fact that the restriction map from $\bH_2^d(r_{m+1})$ to $\bH^2_d(r_m)$ is compact. Indeed, setting $t:= \tfrac{r_m}{r_{m+1}} <1$, we can identify the restriction map with the linear re-scaling map, $\Phi _t \in \scr{B} (\hardy)$ defined by $\Phi _t (f) := f(t L) 1$. It is clear that this linear map is diagonal with respect to the standard orthonormal basis of monomials for $\hardy$, and $\Phi _t (\fz ^\om) = t^{|\om |} \fz ^\om$. Hence the diagonal entries of $\Phi _t$ converge to $0$ as $|\om | \rightarrow \infty$ so that $\Phi _t$, and hence the restriction map, are compact. Since a bounded subset in $\bH_2^d(r_{m+1})$ is weakly precompact, its image in $\bH_2^d(r_{m})$ (under the restriction map, $\pi _{m, m+1}$) is pre-compact in norm.
\end{proof}

The following lemma will be useful in the next section. It describes the density of the image of invertible elements under the restriction map.

\begin{lem} \label{lem:invertible_dense}
    Let $0 < r < s$. For every $n \in \N$, the image of $\GL_n(\cO_d)$ in $\GL_n(\A(r))$ is dense. In particular, the image of $\GL_n(\A(s))$ in $\GL_n(\A(r))$ is dense.
\end{lem}
\begin{proof}
    Let us fix $F \in \GL_n(\A(r))$. Let us consider the norm continuous path $\gamma \colon [0,1] \to \GL_n(\A(r))$ given by $\gamma(t)(Z) = F((1-t)Z)$, for every $Z \in r \cdot \rball$. Clearly, $\gamma$ connects $F$ to the constant function $F(0)$. Using the fact that $\GL_n(\C)$ is path connected, we connect $F(0)$ to the identity matrix. By a known characterization of the path-connected component to the identity in the group of invertible elements of a unital Banach algebra, there exist $G_1,\ldots,G_m \in M_n(\A(r))$, such that $F = e^{G_1} \cdot e^{G_2} \cdots e^{G_m}$. (See, for example,  \cite[Theorem 2.14]{Douglas-banach_alg_tech}.)
    Using Ces\`aro sums (as discussed in the proof of Lemma~\ref{lem:outer_restriction}), we find sequences of matrices of polynomials $Q_k^{(j)} \rightarrow G_j$ in $\A(r)$. It is straightforward to check that the exponential map on a Banach algebra is continuous. (This can be done directly, or by applying upper semi-continuity of the spectrum and the Riesz--Dunford holomorphic functional calculus.) Hence, the sequence of NC entires $H_k = e^{Q_k^{(1)}} \cdot e^{Q_k^{(2)}} \cdots e^{Q_k^{(m)}}$ converges to $F$ in $\GL_n(\A(r))$.
\end{proof}

Classically, the ring of entire functions is a B\'ezout domain. (And a commutative ring is a semifir if and only if it is a B\'ezout domain.) Moreover, every finitely generated ideal of the ring of entire functions (and even the ring of holomorphic functions on a domain in $\C$) is principal and closed. This result is called Helmer's theorem according to \cite[Theorem 13.6]{LueRub} and the converse also holds. Namely, if an ideal of the ring of entire functions is closed, then it is principal. The key tool in the proof of those results is the Weierstrass product theorem that allows one to build entire functions with prescribed zeroes and multiplicities. In other words, closed ideals in the ring of entire functions are described by analytic varieties (counting the multiplicity). Varieties of various types have been considered in the non-commutative setting \cite{HKV-poly,JMS-ncBSO,SSS}. In what follows we consider an ``NC directional variety", in the sense of G. M. Bergman, see \cite{HelMcC-positivss}. Given $S \subseteq \scr{O}_d$, define the corresponding variety as in \cite{JMS-ncBSO},
\[
\scr{Z}(S) = \bigsqcup_{n \leq \aleph_0} \left\{ \left. (X,y) \in \cdn \times (\C ^n \sm \{ 0 \} )  \right| \ y^* h(X) =0 \ \, \forall h \in S  \right\}.
\]
Here, $\C^{\aleph_0} = \ell^2 (\N)$ and $\C ^{(\infty \times \infty)\cdot d} := \scr{B} (\ell ^2)$. Clearly, if $I$ is the closed right ideal generated by $S$, then for every $g \in I$ and every $(X,y) \in \scr{Z} (S)$, we have that $y \perp \nbran g(X)$. Therefore, the variety that we have defined corresponds to closed right ideals. Conversely, given $\scr{Z} \subseteq \bigsqcup_{n\leq \aleph_0} \cdn \times (\C^n \setminus \{0\})$, we can define the corresponding closed right ideal of entire functions. Namely,
\[
I(\scr{Z}) = \left\{f \in \scr{O}_d \left| \  y \perp \nbran f(X) \ \, \forall (X,y) \in \scr{Z} \right. \right\}.
\]
We can also define corresponding varieties in the case of $\scr{O}_d(R)$, $R>0$.

\begin{lem} \label{lem:approx_nullstellensatz}
If $J \vartriangleleft \scr{O}_d (R)$, $R \in (0, +\infty]$ is a right ideal, then $I(\scr{Z}(J)) = \overline{J}$. 
\end{lem}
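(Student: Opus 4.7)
The plan is to prove the two inclusions $\overline{J}\subseteq I(\scr{Z}(J))$ and $I(\scr{Z}(J))\subseteq\overline{J}$ separately. For the first, note that for any strict row contraction $X$ with $\|X\|<R$, finite- or infinite-dimensional, and any vectors $y,v$, the point-evaluation functional $g\mapsto y^*g(X)v$ is $\scr{O}_d(R)$-continuous: picking any $r\in(\|X\|,R)$ one has $y^*g(X)v=\langle g|_{r\rball},K_r\{X,y,v\}\rangle_{\hardy(r)}$ with $K_r\{X,y,v\}\in\hardy(r)$ by the usual Szeg\"o-kernel norm estimate. Hence $I(\scr{Z}(J))$, as an intersection of closed hyperplanes, is itself closed, and since it clearly contains $J$, it contains $\overline{J}$.

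For the converse inclusion I would argue by Hahn--Banach separation. Suppose $f\in I(\scr{Z}(J))$ but $f\notin\overline{J}$. Since $\scr{O}_d(R)=\varprojlim\hardy(r_m)$ as topological vector spaces (Lemma~\ref{lem:inverse_limit}) and each $\hardy(r_m)$ is self-dual, every continuous linear functional on $\scr{O}_d(R)$ factors through some $\hardy(r)$ with $r<R$. Separating $f$ from $\overline{J}$ therefore produces $r<R$ and $h\in\hardy(r)$ with $h\perp J|_{r\rball}$ in $\hardy(r)$ -- equivalently, $h\in\cM_r^\perp$ where $\cM_r:=\overline{J|_{r\rball}}$ is the closed right-$R^{(r)}$-invariant subspace of $\hardy(r)$ generated by the restriction of $J$ -- together with $\langle f|_{r\rball},h\rangle_{\hardy(r)}\neq 0$.

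The key reduction is now to the following Hilbert-space spanning statement: every $h\in\cM_r^\perp$ lies in the closed linear span in $\hardy(r)$ of the Szeg\"o kernel vectors $K_r\{X,y,v\}$ indexed by $(X,y)\in\scr{Z}(J)$ with $\|X\|<r$, where $X$ is allowed to range over all finite matrix levels and over the $\aleph_0$-level of strict row contractions on a separable Hilbert space. Granting this, each such kernel satisfies $\langle f|_{r\rball},K_r\{X,y,v\}\rangle_{\hardy(r)}=y^*f(X)v=0$ by hypothesis on $f$, so taking closed linear combinations gives $\langle f|_{r\rball},h\rangle_{\hardy(r)}=0$, contradicting the separation. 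To manufacture enough operator-theoretic zeros at the $\aleph_0$-level I would introduce the canonical model compression $T_i:=P_{\cM_r^\perp}R^{(r)}_i|_{\cM_r^\perp}$; since $\cM_r$ is $R^{(r)}$-invariant, $\cM_r^\perp$ is $R^{(r)\ast}$-invariant and $T=(T_1,\dots,T_d)$ is a row contraction of row-norm at most $r<R$ on the separable Hilbert space $\cM_r^\perp$, hence $T\in R\cdot\mathbb{B}^{(\aleph_0\times\aleph_0)\cdot d}$. A direct computation using that $\cM_r$ is right-$\A(r)$-invariant gives $j(T)P_{\cM_r^\perp}(1)=P_{\cM_r^\perp}(j|_{r\rball})=0$ for every $j\in J$, which is the operator-level manifestation of $J$ annihilating $T$ on the vector $P_{\cM_r^\perp}(1)$ and provides the ``universal'' zeros of $J$ from which to recover $h$.

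The main obstacle I expect lies in matching the one-sidedness of $J$ with the representation of $\cM_r^\perp$: the compression naturally produces $j(T)v=0$ for $v=P_{\cM_r^\perp}(1)$, whereas membership $(T,y)\in\scr{Z}(J)$ requires the dual condition $j(T)^{\ast}y=0$, which does not follow automatically because $\cM_r^\perp$ is only $R^{(r)\ast}$-invariant rather than $R^{(r)}$-invariant. Bridging this gap is the technical heart of the argument, and I would attempt it either through a transpose/duality argument invoking the unitary flip between the left and right free shifts, or by decomposing $\cM_r$ via an NC inner--outer factorization in the style of Lemma~\ref{lem:inner_restriction} -- writing $\cM_r=V(L)(\hardy(r)\otimes\mathcal{W})$ for an inner $V$ -- and using this factorization to identify $\scr{Z}(J)$ level-wise with $\{(X,y):V(X)^{\ast}y=0\}$. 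The Fr\'echet--Montel property from Theorem~\ref{thm:R_montel} together with the compactness of the restriction maps $\hardy(r_{m+1})\hookrightarrow\hardy(r_m)$ should then patch the $\hardy(r)$-level conclusion into the desired $\scr{O}_d(R)$-level statement.
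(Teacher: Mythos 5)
Your first inclusion and the Hahn--Banach setup for the converse are exactly right and match the paper: separating $f \in I(\scr{Z}(J))\setminus\overline{J}$ produces some $r_m<R$ and $h\in\hardy(r_m)$ with $\ip{h}{j\cdot p}_{\hardy(r_m)}=0$ for all $j\in J$ and all free polynomials $p$, hence $h\perp\nbran j(r_m L)$ for every $j\in J$ by density, while $\ip{h}{f}_{\hardy(r_m)}\neq 0$. But from there you take an unnecessary and ultimately incomplete detour. You do not need the spanning claim that every element of $\cM_r^\perp$ lies in the closed span of Szeg\"o kernels over $\scr{Z}(J)$, nor the compressed model tuple $T=P_{\cM_r^\perp}R|_{\cM_r^\perp}$; and the ``one-sidedness'' mismatch that you correctly identify and then leave unresolved --- calling it the technical heart of the argument --- simply does not arise. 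The separation already hands you a point of the variety: take $X=r_m L$, the scaled left free shift acting on the separable Hilbert space $\hardy$, and $y=h$. Then $\|X\|=r_m<R$, $h\neq 0$ (since $\ip{h}{f}\neq 0$), and the statement $h\perp\nbran j(X)$ for all $j\in J$ is verbatim the defining condition for $(X,y)\in\scr{Z}(J)$ at the $\aleph_0$ level --- this is precisely why the variety is defined to include the infinite level. Membership $f\in I(\scr{Z}(J))$ then forces $h\perp\nbran f(r_m L)\ni f(r_m L)1$, i.e.\ $\ip{h}{f}_{\hardy(r_m)}=0$, the desired contradiction.

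So the concrete gap is this: the step you defer (recovering $h$ from kernel vectors at variety points, or converting the vector-state identity $j(T)P_{\cM_r^\perp}1=0$ into the adjoint condition $j(T)^*y=0$) is both unproved and unneeded. Your compression indeed points the wrong way --- $\cM_r^\perp$ is only backward-shift-invariant, so the natural relation it yields is of the form $j(T)v=0$ rather than $y^*j(T)=0$, and there is no reason the two should coincide for the compressed tuple. The resolution is not to bridge that gap but to avoid it: work on all of $\hardy$ with the uncompressed point $(r_m L,\,h)$, where the orthogonality $h\perp\nbran j(r_m L)$ is already in the correct (adjoint) form demanded by $\scr{Z}(J)$. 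With that substitution your argument collapses to the paper's short proof; as written, it is not complete.
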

\begin{proof}
Let $g \notin \overline{J}$. Then, there exists a continuous functional $\varphi$ on $\scr{O}_d (R)$, such that $\varphi(g) \neq 0$, but $\varphi|_{\overline{J}} = 0$. Since $\scr{O}_d(R)$ is an inverse limit, there exists $m \in \N$, such that $\varphi$ factors through $\hardy (r_m)$. Therefore, there is an $h \in \bH^2_d(r_m)$, such that for all $f \in J$ and every free polynomial $p$, $\ip{h}{M^R _p f}_{\hardy (r_m)} = 0$ and hence $h$ is orthogonal to the range of $f(r_m L)$. However, by our assumption, $\ip{h}{g}_{\hardy (r_m)} \neq 0$ so that $g \notin I(\scr{Z}(J))$. 
\end{proof}

The above lemma will later be used to prove a version of the Bergman Nullstellensatz for each $\scr{O} _d (R)$, $R \in (0, +\infty]$.

\section{Algebraic properties of rings of analytic NC functions} \label{sec:alg_prop}

In this section, we will prove two main results. First, we will prove that each $\scr{O}_d (R)$ is a semifir, and secondly we will show that every finitely--generated right ideal in $\scr{O}_d (R)$ is closed. From the latter fact, we derive a version of the Bergman Nullstellensatz for each $\scr{O} _d (R)$, $R \in (0, +\infty]$. Recall from \cite{Cohn} that a ring, $\scr{R}$, is called a \emph{semifir} or \emph{semi-free ideal ring} if every finitely generated right (equivalently, left) ideal is free as a right (respectively, left) module. Alternative conditions for $\scr{R}$ to be a semifir are provided in \cite[Theorem 2.3.1]{Cohn}. In particular, we will employ the condition \cite[Theorem 2.3.1(a)]{Cohn}. In order to state this condition, recall that for two $n$-tuples of elements $x_1,\cdots,x_n,y_1,\cdots,y_n \in \scr{R}$, if $\sum_{j=1}^n x_j y_j =0$, this equation is said to be an \emph{$n-$term relation for $\scr{R}$}. Such a relation is said to be \emph{trivial}, if for every $1 \leq i \leq n$, either $x_i = 0$ or $y_i = 0$. Write $x = \bsm x_1 \\ \vdots \\ x_n \esm \in \scr{R} ^n$ and $y = \bsm y_1 \\ \vdots \\ y_n \esm \in \scr{R} ^n$. We say that an invertible matrix $S \in \scr{R} ^{n\times n}$ \emph{trivializes} the relation $x ^\mrt y = 0$, $x^\mrt := (x_1, \cdots, x_n ) \in \scr{R} ^{1\times n}$, if the new relation $x^{';\mrt} y' := (x ^\mrt S) (S^{-1} y) = 0$ is trivial. We can now state the condition \cite[Theorem 2.3.1(a)]{Cohn}: A ring, $\scr{R}$, is a semifir if and only if every finite relation, $\sum_{j=1}^n x_j y_j =0$, $n \in \N$, can be trivialized by an invertible matrix in $\scr{R} ^{n\times n}$. Since our goal is to prove that $\scr{O}_d (R)$ is a semifir, for any $0<R \leq +\infty$, we will fix an arbitrary $n \in \N$ and elements $f_1,\cdots, f_n, g_1, \cdots, g_n \in \scr{O}_d (R)$, such that $\sum_{j=1}^n f_j g_j = 0$. We will write 
\[
F = \begin{pmatrix} f_1 & \cdots & f_n \end{pmatrix} \in \scr{O}_d (R) ^{1\times n} \quad \text{and} \quad G = \begin{pmatrix} g_1 \\ \vdots \\ g_n \end{pmatrix} \in \scr{O}_d (R)^n,
\]
so that our relation is $F \cdot G = 0$. Our basic strategy will be to first show that given any such $n-$term relation for $\scr{O} _d (R)$, it can be trivialized in $\A (r)$ for any $0<r<R$. We then construct an inverse limit of invertible $n\times n$ matrices over $\A (r)$, $0<r<R$, which trivialize our relation in $r \cdot \rball$, \emph{i.e.} in $\A (r)$, to obtain an invertible matrix over $\scr{O} _d (R)$ that trivializes our original relation for the inverse limit ring, $\scr{O} _d (R)$.

For the remainder of this section, fix any $0<R\leq +\infty$, and a strictly increasing sequence of positive numbers, $(t_j)_{j=1} ^{\infty}$, so that $t_n$ converges to $R$. Hence, in the case where $R=+\infty$, $t_j \uparrow +\infty$ diverges to $+\infty$ and we will choose $t_j =j \in \N$, in this case.

\begin{lem} \label{lem:all_solutions}
Let $\Phi \in \mult \otimes \C ^{1\times n}$ . Then, there exists $S \in \GL_n(\mult)$, such that $\Phi S = (\tilde{\varphi}_1,\cdots,\tilde{\varphi}_\ell ,0,\cdots,0)$, where $\tilde{\Phi} := (\tilde{\varphi}_1,\cdots, \tilde{\varphi}_\ell)$ is injective, if and only if there exists a (topological) direct sum decomposition $\bH^2_d \otimes \C^n = \cH \dotplus \nbker \Phi$, where $\cH$ is a closed right shift-invariant subspace. Moreover, given any other $T \in \GL_n(\bH^{\infty}_d)$, so that $\Phi T$ has a decomposition with its first $\ell$-coordinates an injective row and where the final $n-\ell$ components are $0$, we have that $T$ satisfies the same properties as $S$ and
$$ T = S \begin{pmatrix} A & 0 \\ B & C \end{pmatrix}; \quad \quad A \in \mult \otimes \C ^{(n-\ell) \times (n-\ell)}, \ B \in \mult \otimes \C ^{\ell \times (n-\ell)}, \ C \in \mult \otimes \C ^{\ell \times \ell}. $$
\end{lem}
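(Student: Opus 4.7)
The strategy is to translate the statement into an operator-theoretic claim about direct-sum decompositions of $\hardy \otimes \C^n$, and then to use the non-commutative Beurling theorem of Popescu and Davidson--Pitts to realize the candidate invertible $S$ as a juxtaposition of inner multipliers. For the ``decomposition implies $S$'' direction, suppose $\hardy \otimes \C^n = \cH \oplus \nbker \Phi(L)$ with $\cH$ a closed right-shift-invariant subspace. Lemma \ref{lem:invertibles_preserve_wan_dim} gives $\dim \scr{W}_\cH + \dim \scr{W}_{\nbker \Phi} = n$; set $k := \dim \scr{W}_\cH$. The NC Beurling theorem yields inner multipliers $V \in \mult \otimes \C^{n \times k}$ and $W \in \mult \otimes \C^{n \times (n-k)}$ with $\nbran V(L) = \cH$ and $\nbran W(L) = \nbker \Phi(L)$. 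Setting $S := (V, W)$, the direct-sum hypothesis gives surjectivity of $S(L)$, while injectivity follows by decomposing any equation $V x_1 + W x_2 = 0$ across $\cH \cap \nbker \Phi = \{0\}$ together with the fact that $V, W$ are isometries. The open mapping theorem gives a bounded inverse, and since the inverse commutes with the right shifts $R \otimes I_n$ it must lie in the multiplier algebra, so $S \in \GL_n(\mult)$. A direct computation then gives $\Phi S = (\Phi V, 0)$, and $\tilde \Phi := \Phi V$ is injective: if $\tilde \Phi(L) x = 0$, then $V(L) x \in \cH \cap \nbker \Phi = \{0\}$, hence $x = 0$ by injectivity of the inner multiplier $V$.

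Conversely, suppose $S \in \GL_n(\mult)$ satisfies $\Phi S = (\tilde \Phi, 0)$ with $\tilde \Phi$ injective; write $S = (S_1, S_2)$ where $S_1$ consists of the first $k$ columns. Define $\cH := \nbran S_1(L)$. Closedness follows because $S(L)$ is a topological isomorphism mapping the closed subspace $\hardy \otimes \C^k \oplus 0$ onto $\cH$, and right-invariance is automatic since $S_1$ is a multiplier. Injectivity of $\tilde \Phi$ immediately gives $\cH \cap \nbker \Phi = \{0\}$, while $\Phi S_2 = 0$ gives $\nbran S_2 \subseteq \nbker \Phi$. The reverse inclusion, completing $\nbker \Phi = \nbran S_2$, follows by writing each $v \in \nbker \Phi$ as $v = u + w$ with $u \in \nbran S_1$, $w \in \nbran S_2$ via invertibility of $S$, and observing $u = v - w \in \nbker \Phi \cap \nbran S_1 = \{0\}$. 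This produces the asserted decomposition.

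For the ``moreover'' clause, let $T \in \GL_n(\mult)$ also satisfy $\Phi T = (\tilde \Phi_T, 0)$ with $\tilde \Phi_T$ injective. The converse direction applied to $T$ yields the decomposition $\hardy \otimes \C^n = \nbran T_1 \oplus \nbker \Phi$ with $\nbran T_2 = \nbker \Phi = \nbran S_2$. Writing $X := S^{-1} T \in \GL_n(\mult)$ with block entries $X_{ij}$, the identity $T_2 = S_1 X_{12} + S_2 X_{22}$ combined with $\nbran T_2 \subseteq \nbran S_2$ and $\nbran S_1 \cap \nbran S_2 = \{0\}$ forces $S_1 X_{12} = 0$; injectivity of $S_1$ then gives $X_{12} = 0$. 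Hence $T = S \begin{pmatrix} A & 0 \\ B & C \end{pmatrix}$ with $A = X_{11}$, $B = X_{21}$, $C = X_{22}$.

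The main technical obstacle throughout is ensuring that the candidate $S$, constructed from NC Beurling inner representers, is genuinely invertible \emph{as a multiplier} rather than merely as a Hilbert space operator. This requires weaving together the direct-sum geometry of the ranges, the open mapping theorem to obtain a bounded inverse on $\hardy \otimes \C^n$, and the commutant description of $\mult \otimes \C^{n \times n}$ as the commutant of the right shifts $R \otimes I_n$, in order to promote Hilbert-space invertibility of $S(L)$ to invertibility of $S$ inside $\GL_n(\mult)$.
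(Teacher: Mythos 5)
Your proof is correct and follows essentially the same route as the paper's: build $S=(V,W)$ from the NC Beurling inner representers of $\cH$ and $\nbker \Phi$, recover the decomposition in the converse direction from the column blocks of $S$, and deduce block lower-triangularity of $S^{-1}T$ for the uniqueness clause. You supply more detail than the paper's terse argument — notably the bijectivity/open-mapping/commutant-lifting justification that $S\in\GL_n(\mult)$, where the paper implicitly relies on its idempotent lemma — but the underlying ideas are identical.
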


\begin{proof}
If there is such a decomposition, then we can choose $S = (V_1, V_2)$, where $V_1$ is the inner with range $\cH$ and $V_2$ is the inner with range $\nbker \Phi$. Conversely, if there is an $S$, such that $\Phi S$ has the form above, then set $\cW = \bigvee \{e_{n-\ell +1}, \cdots, e_n\} \otimes \bH^2_d$ and note that $S^{-1} \nbker \Phi = \cW$. Therefore, if $\cH = S \cW ^{\perp}$, then $\cH \dotplus \nbker \Phi = \bH^2_d \otimes \C^n$. If $T$ is another such invertible matrix then $T^{-1}S \cW = \cW$. Hence, $T^{-1}S$ is block lower-triangular and by a symmetric argument, so is $S^{-1} T$. 
\end{proof}

\begin{lem} \label{lem:local_trivialization}
Given $F \in \scr{O} _d (R) ^{1\times n}$, $G \in \scr{O} _d (R) ^n$ so that $F\cdot G \equiv 0$, then for any $0<r<R$, there exists $S \in \GL_n( \A (r))$, such that 
\[
FS = \begin{pmatrix} \underbrace{\tilde{F}}_{1 \times (n-\ell)} & \underbrace{0}_{1 \times \ell} \end{pmatrix} \text{ and } S^{-1} G = \begin{pmatrix} \underbrace{0}_{(n-\ell) \times 1} \\ \underbrace{\tilde{G}}_{1 \times \ell} \end{pmatrix}, 
\]
in the row-ball of radius $r$. In other words, we can trivialize the relation $FG = 0$ in $\A (r)$. Moreover, there exists $0<r_0<R$, such that for $r > r_0$, $\tilde{F}$, is injective and $\ell$ is independent of $r \in (r_0, R)$.
\end{lem}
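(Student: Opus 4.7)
The plan is to construct $S$ by adapting the idempotent construction of Lemma~\ref{lem:idempotent}, applied to an idempotent built from the inner--outer factorization of $F$ at a level slightly above $r$. First I would choose $s$ with $r<s<R$ (and, whenever possible, $s>\rho$ from Lemma~\ref{lem:inner_restriction}), then take the inner--outer factorization $F(sL)=V(L)T(L)$ with $V\in\mult\otimes\C^{1\times k}$ inner and $T\in\mult\otimes\C^{k\times n}$ outer, and set $\ell=n-k$. Writing $\sigma=r/s\in(0,1)$, Lemma~\ref{lem:outer_restriction} provides a right inverse $H(L)\in\mult\otimes\C^{n\times k}$ of $T(\sigma L)$, and the matrix $E(L):=H(L)\,T(\sigma L)\in\mult\otimes\C^{n\times n}$ is idempotent. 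A direct computation using $F(rL)=V(\sigma L)T(\sigma L)$ and $T(\sigma L)H(L)T(\sigma L)=T(\sigma L)$ shows $F(rL)(I-E)=0$, so $\mathrm{Ran}(I-E)\subseteq\ker F(rL)$; when $r>\rho$, the injectivity of $V(\sigma L)$ from Lemma~\ref{lem:inner_restriction} upgrades this to the equality $\ker F(rL)=\mathrm{Ran}(I-E)$.

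Next, Lemma~\ref{lem:idempotent} yields $S_0\in\GL_n(\mult)$ with $E=S_0(I_{\hardy}\otimes P_k)S_0^{-1}$, where $P_k=\mathrm{diag}(I_k,0_{\ell})$. Rearranging as $S_0(I-I\otimes P_k)=(I-E)S_0$ and composing with $F(rL)$ on the left yields $F(rL)S_0=(\tilde F(L),0_{1\times\ell})$. When $r>\rho$, the identification $\ker F(rL)=S_0\cdot W$, with $W$ the span of $\hardy$ in the last $\ell$ coordinates, combined with $F\cdot G\equiv 0$ gives $S_0^{-1}G(rL)\in W$, so its first $n-\ell$ entries vanish; this same identification forces $\tilde F$ to be injective. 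Taking $r_0=\rho$ settles the moreover clause: for $r\in(r_0,R)$, every admissible choice of $s$ produces the same $k$ (by Lemma~\ref{lem:inner_restriction}), hence the same $\ell$.

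The delicate step is placing $S_0$ in $\GL_n(\A(r))$ rather than merely $\GL_n(\mult)$. I would exploit that, under the scaling identification $\mult\cong\mult(s)$, the element $S_0$ corresponds to some $\hat S\in\GL_n(\mult(s))$; and since $r<s$, every bounded analytic function on $s\cdot\rball$ is uniformly approximated on the compactly contained closed ball $r\cdot\overline{\rball}$ by Ces\`aro means of its Taylor--Taylor expansion, giving the continuous inclusion $\mult(s)\hookrightarrow\A(r)$ and placing $\hat S$ in $\GL_n(\A(r))$. The operator equations then descend, via this identification, to the required equations $FS=(\tilde F,0)$ and $S^{-1}G=(0,\tilde G)^{\mathrm{T}}$ in $\A(r)$. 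The main obstacle I anticipate is compatibility between the level-$s$ inner--outer factorization and the sub-level $\sigma$ at which the idempotent is built, especially when $r\leq\rho$: there the injectivity of $V(\sigma L)$ may fail and one must argue more carefully (for instance, by taking $s$ close to $r$ and refining the choice of $\ell$) that $S_0^{-1}G$ still has the required vanishing entries.
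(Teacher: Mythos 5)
Your overall strategy coincides with the paper's: factor $F$ inner--outer at a radius above $r$, use Lemma \ref{lem:outer_restriction} to right-invert the outer part at a lower radius, form the resulting idempotent, and diagonalize it via Lemma \ref{lem:idempotent} to produce the trivializing matrix. (Your $E=H\,T(\sigma L)$ is exactly the paper's projection onto $\nbran H$ along $\nbker T(\sigma L)$, and Lemma \ref{lem:idempotent} applied to it yields the paper's $S=(\Theta,\Xi)$.) However, there is a genuine gap in your ``delicate step.'' The right inverse $H$ supplied by Lemma \ref{lem:outer_restriction} is a right inverse of the \emph{restricted} operator $T(\sigma L)$, $\sigma=r/s$, and is only a bounded multiplier at that restricted level; in the original coordinates it corresponds to an element of $\mult(r)$, not of $\mult(s)$. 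Consequently $E$ and $S_0$ live only in $\GL_n(\mult(r))$, and your claim that ``$S_0$ corresponds to some $\hat S\in\GL_n(\mult(s))$'' is unjustified. The inclusion $\mult(s)\hookrightarrow\A(r)$ for $r<s$ is correct, but it does not apply to $S_0$, so you have not placed $S_0$ in $\GL_n(\A(r))$; a generic element of $\mult(r)$ need not lie in $\A(r)$. This is precisely why the paper works with \emph{three} radii $r<s<t$: the factorization is taken at $t$, the right inverse and the idempotent are built at $s$ (so $S\in\GL_n(\mult(s))$ in the original coordinates), and only then does one restrict to $r<s$ to land in $\GL_n(\A(r))$. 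Your argument is repaired by inserting such an intermediate radius, but as written it does not close.

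A secondary point: you obtain the vanishing of the first $n-\ell$ entries of $S_0^{-1}G$ only for $r>\rho$, by routing it through the identification $\ker F(rL)=S_0W$, and you then worry about the case $r\le\rho$. This detour is unnecessary and leaves the statement unproved for small $r$ (the lemma asserts the trivialization for \emph{all} $0<r<R$, with only the injectivity of $\tilde F$ and the constancy of $\ell$ reserved for $r>r_0$). Since the inner part $V(L)$ is an isometry, $F G\equiv 0$ already forces $T(L)G=0$ at the factorization level, hence $\nbran G(rL)\subseteq\nbker T(\sigma L)=\nbran(I-E)=S_0(W)$ for every $r$, with no reference to $\rho$; this is how the paper argues, and it dissolves the difficulty you anticipate in your last paragraph.
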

\begin{proof}
Choose any $r,s,t >0$ so that $0<r<s<t <R$. Factor $F(tL) = V (L) T (L)$, where $V \in \mult \otimes \C ^{1\times k}$ is inner and $T \in \mult \otimes \C ^{k \times n}$ is outer ($k \leq n$). Now consider $T(\tfrac{s}{t} L)$ (which corresponds to the restriction of $T$ to the ball of smaller radius $s>0$). By Lemma \ref{lem:outer_restriction}, there exists an $H_s \in \mult \otimes \C ^{n\times k}$, such that $T(\tfrac{s}{t} L) H_s (L) = I _{\hardy} \otimes I_k$. In particular, we have a (generally) non-orthogonal, topological direct sum, $\hardy \otimes \C^n = \nbran H_s (L) \dotplus \nbker T (\tfrac{s}{t} L)$. That is $\nbran H_s (L)$ is a closed subspace, since $H_s (L)$ is bounded below, $\nbker T (\tfrac{s}{t} L)$ is also closed, and these two closed subspaces have trivial intersection. Recall that we use the notation $\dotplus$ to denote such a topological, and not necessarily orthogonal direct sum.

Let $\Theta \colon \hardy  \otimes \C^{n-\ell} \to \hardy  \otimes \C^n $ and $\Xi \colon \hardy  \otimes \C^\ell \to \hardy  \otimes \C^n$ be inners, such that $\nbran \Theta = \nbran H_s (L)$ and $\nbran \Xi = \nbker T (\tfrac{s}{t} L)$. (Both $\nbran H_s (L)$ and $\nbker T (\tfrac{s}{t}L)$ are closed $R \otimes I_n-$invariant subspaces so that the NC Beurling theorem applies.) Let $S (L) = ( \Theta (L),  \Xi (L) ) \in \GL_n (\mult  )$. Then, $S$ is invertible (since the Friedrichs angle between the spaces is not $0$). Alternatively, since $\hardy \otimes \C ^n = \nbran \Theta (L) \dotplus \nbran \Xi (L)$ is a topological direct sum, there is a bounded, idempotent operator, $E \in \scr{B} (\hardy \otimes \C ^n)$ so that $\nbran E = \nbran \Theta (L)$ and $\nbran I-E = \nbran \Xi (L)$. Then, for any $x \in \hardy \otimes \C ^n$ and $1 \leq j \leq d$,
$$ (R_j \otimes I_n) E x  =  (R_j \otimes I_n) E^2 x = E (R_j \otimes I_n) Ex, $$ since $E$ is idempotent and $\nbran E = \nbran \Theta (L)$ is $R\otimes I_n-$invariant. Similarly, 
\ba E (R_j \otimes I_n) x & = & E (R_j \otimes I_n ) (E + I-E)x \\
& = & E (R_j \otimes I_n) E x = (R_j \otimes I_n) E x, \ea
and we conclude that $[R_i \otimes I_n, E ]=0$. By Davidson--Pitts \cite[Theorem 1.2]{DP-inv}, we conclude that $E = E(L) \in \mult \otimes \C ^{n\times n}$, and Lemma \ref{lem:idempotent} now implies that $S(L) = (\Theta (L), \Xi (L))$ is invertible. We now have that,
\[
T(\tfrac{s}{t} L) S (L) = \begin{pmatrix} \underbrace{\star}_{k \times (n - \ell)} & \underbrace{0}_{k \times \ell} \end{pmatrix}.
\]

Restricting further to the ball of radius $r>0$, the above identity still holds for $T(\tfrac{r}{t}L) S (\tfrac{r}{t} L)$. Since $0 = F(tL) G(tL) = V (L) T (L) G(tL)$, it follows that $T (L) G(tL) = 0$ and hence $T(\tfrac{s}{t} L) G(sL) = 0$. That is, the range of $G(sL)$ is contained in $\nbker T(\tfrac{s}{t} L) = \nbran \Xi (L) $. We conclude that $G(sL) = \Xi (L) G' (L)$, for some matrix multiplier $G' (L)$ of the correct size. Therefore, 
\ba  S (L) ^{-1} G(sL) & = & S(L) ^{-1} \Xi (L) G' (L) \\
& = & S(L) ^{-1} (0 _{n \times (n-\ell)}, \Xi (L) ) G' (L) \\
& = & S(L) ^{-1} S(L) \bpm 0 _{n-\ell} & 0 \\ 0 & I _{\ell} \epm G' (L) \\
& = & \bpm \underbrace{0}_{(n-\ell) \times 1} \\ \underbrace{\star}_{\ell \times 1} \epm.
\ea
Clearly, this relation holds in $\A (r)$, \emph{i.e.} for $S(\tfrac{r}{s} L) ^{-1} G(rL)$. Lastly,
\ba F(rL) S(\tfrac{r}{s} L) & = & F \left( t (\tfrac{r}{t} L) \right) S (\tfrac{r}{s} L) \\
& = & V (\tfrac{r}{t} L) T ( \tfrac{r}{t} L)   S (\tfrac{r}{s} L) \\
& = & V (\tfrac{r}{t} L) T \left( \tfrac{s}{t} (\tfrac{r}{s} L) \right) \left( \Theta (\tfrac{r}{s} L), \Xi (\tfrac{r}{s} L) \right). \ea

Since $\nbran \Xi (L)  = \nbker T (\tfrac{s}{t} L)$ it follows that $\nbran \Xi (\tfrac{r}{s} L) \subseteq \nbker T (\tfrac{r}{t} L)$ so that the above equation becomes
$$ F(rL) S(\tfrac{r}{s} L) = V(\tfrac{r}{t}L) \begin{pmatrix} \underbrace{\star}_{k \times (n - \ell)} & \underbrace{0}_{k \times \ell} \end{pmatrix} = \begin{pmatrix} \underbrace{\star}_{1 \times (n-\ell)} & \underbrace{0}_{1 \times \ell} \end{pmatrix}. $$
In conclusion, we have trivialized our relation in $\A (r)$. Namely, if we restrict $F$ to $r \cdot \rball$, then for any $Z \in r \cdot \rball$, $F(Z) = F(r (\tfrac{1}{r} Z))$, where $\tfrac{1}{r} Z \in \rball$. It then follows, by the above equation, that 
$$  F(Z) S(\tfrac{1}{s} Z) = V(\tfrac{1}{t}Z) \begin{pmatrix} \underbrace{\star}_{k \times (n - \ell)} & \underbrace{0}_{k \times \ell} \end{pmatrix} = \begin{pmatrix} \underbrace{\star}_{1 \times (n-\ell)} & \underbrace{0}_{1 \times \ell} \end{pmatrix}, $$ for any $Z \in r \cdot \rball$.

Assume now that $r > \rho$, where $\rho$ is the constant obtained in Lemma \ref{lem:inner_restriction}. By Lemma \ref{lem:inner_restriction}, $\nbker F(rL) = \nbker T(\frac{r}{t}L) $ and, by our construction, $T(\frac{r}{t}L) S(\frac{r}{s} L) = ( \tilde{T}, 0 )$. Note that $\tilde{T}$ must be injective and, thus, so is $\tilde{F} = V(\frac{r}{t} L) \tilde{T}$.

It remains to show that for $0<r<R$ sufficiently large, $\ell$, is independent of $r$. Let us write $\ell_r := \ell$ to emphasize this dependence. For $r > \rho$, we have that $\ell_r = \dim _\scr{W} \, \nbker F(rL)$, the wandering dimension of $\nbker F(rL)$. Clearly, for $r' > r > \rho$, the restriction map induces an injective map from $\nbker F(r' L)$ to $\nbker F(r L)$. The closure of the image $\nbker F(r' L)$ is a closed right invariant subspace of $\nbker F(r L)$. Hence, $\ell_{r'} \leq \ell_r$. Namely, the function $r \mapsto \ell_r$ is monotonically decreasing, integer-valued, and bounded below by $1$. Hence, for $r$ big enough, it is constant and we choose $0< \rho \leq r_0 <R$ so that $\ell _r$ is constant for all $0<r_0 <r <R$.
\end{proof}

The proof of the following theorem is inspired by the inverse limit results of \cite{Arens-dense_lim}.

\begin{thm} \label{thm:entires_semifir}
The ring of uniformly analytic NC functions, with radius of convergence at least $R \in (0, +\infty]$, $\scr{O}_d (R)$, $d \in \N$, is a semi-free ideal ring. In particular, the ring of uniformly entire NC functions, $\scr{O} _d = \scr{O} _d (\infty)$, is a semifir.
\end{thm}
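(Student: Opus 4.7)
By Cohn's characterization \cite[Theorem 2.3.1(a)]{Cohn}, it suffices to show that any $n$--term relation $F G = 0$, with $F \in \scr{O}_d(R)^{1 \times n}$ and $G \in \scr{O}_d(R)^{n}$, can be trivialized by some $S \in \GL_n(\scr{O}_d(R))$. Fix such a relation. The plan is to exploit the inverse limit presentation of Lemma \ref{lem:inverse_limit}, $\scr{O}_d(R) = \varprojlim_m \A(r_m)$ for a strictly increasing sequence $r_m \uparrow R$. Since the connecting maps are unital ring homomorphisms, invertibility is preserved componentwise, and $\GL_n(\scr{O}_d(R)) = \varprojlim_m \GL_n(\A(r_m))$. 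The problem therefore reduces to building a \emph{compatible} sequence $(S_m)_{m \in \N}$, with $S_m \in \GL_n(\A(r_m))$ trivializing the relation on $r_m \cdot \rball$ and $\pi_{m,m+1}(S_{m+1}) = S_m$.

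The construction proceeds by induction on $m$. Choose the sequence with $r_1 > r_0$ in the stable regime of Lemma \ref{lem:local_trivialization}, so that the block sizes $(n - \ell, \ell)$ are constant throughout and $\tilde{F}$ is injective at every level. Take $S_1 \in \GL_n(\A(r_1))$ from Lemma \ref{lem:local_trivialization}. Given compatible $S_1, \ldots, S_m$, select any local trivialization $\hat S_{m+1} \in \GL_n(\A(r_{m+1}))$ via Lemma \ref{lem:local_trivialization}. Its restriction $\pi_{m,m+1}(\hat S_{m+1})$ and the previously chosen $S_m$ are both trivializations in $\A(r_m)$, so by Lemma \ref{lem:all_solutions} they differ by right-multiplication by a lower block-triangular invertible $T_m \in \GL_n(\A(r_m))$. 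The goal at this step is to modify $\hat S_{m+1}$ by right-multiplication by the inverse of a lift $\tilde T_m \in \GL_n(\A(r_{m+1}))$ of $T_m$, and thereby arrange that $\pi_{m,m+1}(S_{m+1}) = S_m$ exactly.

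The main obstacle is precisely the existence of such a lift: a generic element $T_m \in \A(r_m)$ need not extend analytically to $r_{m+1} \cdot \rball$, so one cannot simply lift $T_m^{-1}$ to $\A(r_{m+1})$. To circumvent this, the plan is not to invoke Lemma \ref{lem:local_trivialization} independently at each scale, but rather to build all of the $S_m$ from a single global object. Concretely, one uses Lemma \ref{lem:inner_restriction}: for $t$ sufficiently close to $R$ the inner-outer factorization $F(tL) = V_t(L) T_t(L)$ has stable length $k = n - \ell$, and the kernels $\nbker F(rL)$ for $r \in (r_0, R)$ are described in a coherent way by a single inner function $\Xi \in \mult \otimes \C^{n \times \ell}$, while $\Theta$ is built from the right-inverse of the outer $T_t$ obtained via Popescu's corona theorem (Lemma \ref{lem:outer_restriction}). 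The trivializing matrix $S = (\Theta, \Xi) \in \GL_n(\mult)$ then furnishes, upon rescaling $Z \mapsto S(\tfrac{1}{t} Z)$, a single family $S_m := S(\tfrac{1}{t} \cdot)|_{r_m \cdot \rball}$ automatically satisfying $\pi_{m,m+1}(S_{m+1}) = S_m$.

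It remains to verify that this family actually produces a trivialization in $\scr{O}_d(R)^{n \times n}$ rather than merely in $\mult^{n \times n}$ at each finite scale. For this one uses the stability provided by Lemma \ref{lem:inner_restriction} — the wandering dimensions and the image/kernel structure are independent of the choice of $t$ in the stable regime — to argue that the restriction of $S$ built from one sufficiently large $t$ equals, up to lower block-triangular gauge in $\GL_n(\A(r_m))$, the restriction of $S$ built from any other sufficiently large $t' > t$. This stability, combined with the inductive gauge correction, assembles the desired compatible sequence $(S_m) \in \varprojlim \GL_n(\A(r_m)) = \GL_n(\scr{O}_d(R))$. Once such $S$ is in hand, the identities $F S = (\tilde F, 0)$ and $S^{-1} G = (0, \tilde G)^{\mathrm{t}}$ hold at every finite level and therefore hold in $\scr{O}_d(R)$, completing the verification of Cohn's criterion.
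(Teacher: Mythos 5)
You have the right reduction (Cohn's criterion plus the inverse limit presentation), you correctly use Lemma \ref{lem:local_trivialization} to trivialize the relation at each finite radius, and you correctly identify the central obstacle: the block-triangular gauge $T_m \in \GL_n(\A(r_m))$ relating two trivializations on $r_m \cdot \rball$ need not extend analytically to $r_{m+1} \cdot \rball$, so exact lifting is impossible. The problem is that your proposed workaround does not actually resolve this obstacle. The ``single global object'' $S = (\Theta, \Xi) \in \GL_n(\mult)$ is built from the inner--outer factorization of $F(tL)$ for one \emph{fixed} $t < R$; after rescaling it is only defined (and invertible) on $t \cdot \rball$, so its restrictions $S_m$ are mutually compatible only for $r_m < t$, and the object never belongs to $\scr{O}_d(R)^{n\times n}$. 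To cover all radii up to $R$ you must let $t \to R$, and the trivializers built from different values of $t$ agree only up to exactly the lower block-triangular gauges whose non-liftability you already flagged. Your final sentence --- ``this stability, combined with the inductive gauge correction, assembles the desired compatible sequence'' --- therefore restates the problem rather than solving it: the inductive gauge correction is precisely the step that cannot be performed exactly.

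What the paper does instead is an \emph{approximate} lifting followed by a limiting argument (in the spirit of Arens' dense inverse limits). Since the restriction $\pi_{m+1,m}$ has dense range in $\A(t_m)^{n\times n}$ (free polynomials are dense) and the invertibles in a unital Banach algebra form an open set, one can choose a lower block-triangular $T \in \GL_n(\A(t_{m+1}))$ whose restriction is within $\varepsilon_m$ of the exact gauge $T'$; replacing $S_{m+1}$ by $S_{m+1}T$ makes $\|\pi_{m+1,m}(S_{m+1}) - S_m\| < \varepsilon_m$, but not zero. The paper then chooses the $\varepsilon_m$ summable and dominated by $r_{m+m_0}(S_m)/2^{m+k} = \|S_m^{-1}\|^{-1}/2^{m+k}$, so that for each fixed $m$ the sequence $S'_{m,k} = \pi^{(n)}_{m+m_0+k,m+m_0}(S_{m+k})$ is Cauchy, its limit $S'_m$ stays inside the ball of invertibles around $S_m$, and the limits satisfy $\pi^{(n)}_{m+1,m}(S'_{m+1}) = S'_m$ \emph{exactly}, hence define an element of $\GL_n(\scr{O}_d(R))$ trivializing the relation. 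This quantitative approximation-and-limit step is the missing idea in your proposal; without it the compatible sequence is never actually produced.
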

\begin{proof}
Let $F$ and $G$ be as above, and let $r_0$ be the number obtained in the preceding lemma so that for any $r \geq r_0$, 
the wandering dimension of $\nbker F(rL)$ is constant and equal to $\ell \in \N$, $\ell < n$.
Given the strictly increasing sequence $(t_m)_{m=1} ^\infty$, $0<t_m < R$, $t_m \uparrow R$, fix $m \in \N$, such that $t_m > r_0$. By Lemma \ref{lem:local_trivialization}, there exist $S \in \GL_n(\A (t_{m+1}))$ and $S' \in \GL_n ( \A (t_m) )$ be such that
\[
F S = \bpm \tilde{F}  & 0 \epm, \ S^{-1} G = \bpm 0 \\ \tilde{G} \epm, \quad \mbox{and} \quad F S' = \bpm \tilde{F}' & 0 \epm, \ (S')^{-1} G = \bpm 0 \\ \tilde{G}' \epm,
\]
in the sense that the first two equations hold for all evaluations in $t_{m+1} \cdot \rball$ and the second two hold for all evaluations in $t_m \cdot \rball$, $\wt{F} \in \A (t_{m+1}) \otimes \C ^{1\times (n-\ell)}$, $\wt{G} \in \A (t_{m+1}) \otimes \C ^{\ell}$, $\wt{F} ' \in \A (t_m) \otimes \C ^{1 \times (n-\ell)}$ and $\wt{G} ' \in \A (t_m) \otimes \C^{\ell}$. Moreover, by the aforementioned lemma, $\tilde{F}$ and $\tilde{F'}$ are both injective. 

Recall that $\pi_{m+1,m} : \A (t_{m+1}) \hookrightarrow \A (t_m)$ denotes the restriction map. By our assumption, $\pi_{m+1,m}(S)$ maps $\cW = \mathrm{Span}\{e_{n-\ell +1}, \cdots, e_n\} \otimes \bH^2( t_m \cdot \rball)$ into the kernel of $F$ so that $\pi_{m+1,m}(\tilde{F})$ must be injective. To be more precise, if it was not injective, then $\nbker F (t_m L)$ would be a right invariant closed subspace that would properly contain $\cW$ and therefore it would have more wandering vectors than $\ell$. However, the wandering dimension of $\nbker F(t_m L)$, is $\ell$ by the previous lemma, since we assume $t_m > r_0$. Let $m_0 \in \N$ be the smallest natural number so that $t_{m_0} > r_0$. 

We conclude, from Lemma \ref{lem:all_solutions}, that for any $m\geq m_0$, there exists a block lower triangular $T' = \bsm A' & 0 \\ B' & C' \esm \in \GL_n( \A (t_m))$, with $A' \in \mult \otimes \C ^{(n-\ell) \times (n-\ell)}$, $C' \in \mult \otimes \C ^{\ell \times \ell}$, such that $S' = \pi_{m+1,m}(S) T'$. For every $\varepsilon > 0$, we can choose a block lower triangular $T \in \GL_n( \A ( t_{m+1}))$ so that $\|\pi_{m+1,m}(T) - T'\| < \frac{\varepsilon}{\|S\|}$. Indeed, 
$$ S' = \pi _{m+1, m} (S) T' \quad \Rightarrow \quad S' T^{';-1} = \pi _{m+1,m} (S). $$ Since both $S'$ and $\pi _{m+1,m} (S)$ trivialize our relation $F \cdot G=0$ on the row-ball of radius $t_m$, Lemma  \ref{lem:all_solutions}
implies that there is a block lower triangular $\wt{T} '$ so that $\pi _{m+1,m} (S) = S' \wt{T}'$. It follows that $\wt{T}' = T^{';-1}$, so that $T^{';-1}$ is also block lower triangular (with the same sized blocks). If 
$$ T' =: \bpm A' & 0 \\ B' & C' \epm, $$ the fact that $T^{';-1}$ is also block lower triangular forces 
the diagonal blocks, $A'$ and $C'$ to be invertible, so that 
$$ T^{';-1}  = \bpm A^{';-1} & 0 \\ -C^{';-1} B' A^{';-1} & C ^{';-1} \epm. $$ Note that the images of $\A(t_{m+1})^{n\times n}$ and $\GL_n (\A(t_{m+1}))$ in $\A(t_{m})^{n\times n}$ and $\GL_n(\A(t_m))$, respectively, are dense. The latter statement was proved in Lemma \ref{lem:invertible_dense}. Hence, there exist matrices $A,B,C$ over $\A (t_{m+1})$ of appropriate sizes so that 
$A,C$ are invertible and so that their images under the restriction map, $\pi _{m+1,m}$ are arbitrarily close to $A',B',C'$, respectively. That is, for any $\eps >0$, we can choose 
$$ T := \bpm A & 0 \\ B & C \epm  \in \GL _n (\A (t_{m+1})), $$ so that $\|\pi_{m+1,m}(T) - T'\| < \frac{\varepsilon}{\|S\|}$, and hence,
\[ \|\pi^{(n)}_{m+1,m}(S T) - S'\| \leq \|\pi^{(n)}_{m+1,m}(S)\| \|\pi_{m+1,m}(T) - T'\| < \varepsilon.
\]
In other words, we can replace $S$ with a different trivializing matrix, such that its image will be close to the solution $T$ within the desired arbitrary precision in the row-ball of radius $t_m$.

Given a complex, separable Hilbert space, $\cH$ and an invertible $B \in \scr{B} (\cH)$, let $r(B) := \|B^{-1}\|^{-1}$. It is well known that the open ball of radius $r(B)$ around $B$ is contained in the group of invertible elements of $\scr{B} (\cH)$. If $B \in \GL_n (\A (t_m))$, we will write $r_m (B)$ in place of $r(B)$ in what follows.

Using the above observations, we will inductively build sequences, $\varepsilon_m > 0$, and $S_m \in \GL_n(\A (t _{m + m_0}))$, for all $m \in \N$, with the following properties:
\begin{itemize}
    \item[(i)] For all $m \in \N$, $F_m S_m = (\tilde{F}_m, 0_{\ell} )$ with $\tilde{F}_m$ injective, and where $F_m$ denotes the restriction of $F$ to the row-ball of radius $t_m$,  
    \item[(ii)] For all $m \in \N$, $\|\pi^{(n)}_{m+m_0+1,m+m_0}(S_{m+1}) - S_m\| < \varepsilon_m$,
    \item[(iii)] $\sum_{m =1}^{\infty} \varepsilon_m < \infty$,
    \item[(iv)] For every $m \in \N$ and every $k \in \N$, $\varepsilon_{m+k} \leq r_{m+m_0}(S_m)/2^{m+k}$.
\end{itemize}
For each $m > m_0$, by Lemma \ref{lem:local_trivialization}, we can find $T_m \in \GL_n( \A (t_m))$, such that $\pi_{m}(F) T_m = (\tilde{F}_m, 0_{\ell} )$, with $\tilde{F}'_{m}$ injective. For $m = 1$, we set $S_m = T_{m_0 + 1}$ and let $\varepsilon_1 = \min\{1, r_{m_0 +1}(S_1)/2\}$. Assume that $S_1,\cdots,S_m$ and $\varepsilon_1,\cdots,\varepsilon_m$ have been defined. By the argument above, an $S_{m+1} \in \GL_n(\A(m_0 + m +1))$ exists, such that $\|\pi^{(n)}_{m_0 + m +1,m_0 + m}(S_{m+1}) - S_m\| < \varepsilon_m$ and $\pi_{m_0 + m +1}(F) S_{m+1} = (\tilde{F}_m, 0_{\ell} )$, with $\tilde{F}_{m+1}$ injective. We now set $\varepsilon_{m+1} = \frac{1}{2^m} \min\{1, r_{m_0+1}(S_1)/2,\cdots, r_{m_0 + m +1}(S_{m+1})/2\}$. The first and second properties are satisfied immediately from the construction. To check the last property, let us fix an arbitrary $m \in \N$. By construction, $\varepsilon_{m+k}$ is a minimum taken over a finite set of numbers, one of which is $r_{m_0+m}(S_m)/2$ divided by $2^{m+k-1}$. 

Having these sequences at hand, we fix $m \in \N$ and set $S_{m,0}' := S_m$. For every $k \in \N$, we further set $S_{m,k}' := \pi_{m+m_0+k,m+m_0}^{(n)}(S_{m+k})$. Since every $\pi_{m_0+m+1,m_0+m}$ is a complete contraction, so is $\pi_{m_0+m+k,m_0+m}$. We note now that
\ba
\|S_{m,k+1}' - S_{m,k}' \| & = & \|\pi_{m_0+m+k+1,m_0+m}^{(n)}(S_{m+k+1}) - \pi_{m_0+m+k,m+m_0}^{(n)}(S_{m+k})\|  \\
&\leq &  \|\pi_{m_0+m+k+1,m_0+m+k}^{(m)}(S_{m+k+1}) - S_{m+k}\| < \varepsilon_{m+k}.
\ea
A standard argument now shows that $S_{m,k} '$ is a Cauchy sequence. Moreover, 
\ba
\|S_{m,k}' - S_m\| & \leq & \sum_{j=1}^k \|S_{m,k-j+1} - S_{m,k-j}\| < \sum_{j=0}^{k-1} \varepsilon_{m+j} \\
& \leq & \dfrac{r_{m+m_0}(S_m)}{2^m} \sum_{j=0}^{k-1} \dfrac{1}{2^j} \\
& < & \dfrac{r_{m+m_0}(S_m)}{2^m}. \ea 

In particular, the Cauchy sequence stays in a closed ball of radius $\frac{r_{m+m_0}(S_m)}{2^m}$ around $S_m$ and hence its limit, $S_m '$, is invertible. We now compute,
\ba \pi_{m+m_0+1,m+m_0}^{(n)}(S_{m+1}') & = & \lim_{k \to \infty} \pi_{m+m_0+1,m+m_0}^{(n)}(S_{m+1,k} ') \\
& = & \lim_{k \to \infty} \pi_{m+m_0+1,m+m_0}^{(n)}(\pi_{m+m_0+1+k}(S_{m+1+k})) \\
& = & \lim_{k \to \infty}S_{m,k+1}' = S_m '. \ea

This shows that the sequence, $S_m'$, is an inverse limit sequence and therefore defines an element, $S' \in \scr{O}_d (R) ^{n\times n}$. Moreover, since every $S_m '$ is invertible, $\pi_{m+m_0+1,m+m_0}^{(n)}(S_{m+m_0+1}^{';-1}) = S_m^{':-1}$ and we conclude that $S' \in \GL_n(\scr{O}_d (R))$. Lastly,
\[
F S_m ' = \lim_{k \to \infty} F S_{m,k} ' = \lim_{k \to \infty} ( \underbrace{\star}_{n - \ell} \  0_{\ell} )
\]
Similarly, the first $n-\ell$ coordinates of $S_m^{';-1} G$ are $0$. Therefore, $S'$ trivializes our relation.
\end{proof}

Observe that in the proof of Lemma \ref{lem:local_trivialization}, the existence of $G \in \scr{O} _d (R) ^n$ obeying $F \cdot G =0$ for $F \in \scr{O} _d (R) ^{1\times n}$ was only used to ensure that $F(rL)$ always has a non-trivial kernel for any $0<r<R$. In fact, we now have enough tools to show that $F \in \scr{O} _d (R) ^{1\times n}$ is injective if and only if it is injective on $r \cdot \rball$ for sufficiently large $0<r<R$.

\begin{lem} \label{lem:big_enough_injective}
Let $F \in \scr{O}_d ^{1\times n} (R)$, $R\in (0, +\infty]$. Then, the map $F \colon \scr{O}_d (R) \otimes \C^n \to \scr{O}_d (R)$ is injective if and only if $F(rL)$ is injective for sufficiently large $0<r<R$.
\end{lem}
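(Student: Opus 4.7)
The plan is to prove both implications by leveraging the evaluation map $f \mapsto f(rL)$ and the machinery developed in Lemma \ref{lem:local_trivialization} and Theorem \ref{thm:entires_semifir}. The key observation highlighted by the authors in the paragraph preceding the statement is that the proof of Lemma \ref{lem:local_trivialization} uses the existence of a nonzero $G \in \scr{O}_d(R)^n$ with $FG=0$ only to guarantee that $F(rL)$ has non-trivial kernel for every $r$; so that lemma (and thus the inverse-limit construction of Theorem \ref{thm:entires_semifir}) can be rerun with the weaker hypothesis ``$F(rL)$ has non-trivial kernel'' as the starting point.

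For the ``if'' direction, I would argue by contrapositive: assume there exists $0 \neq G \in \scr{O}_d(R)^n$ with $FG = 0$. Since evaluation at the row contraction $rL \in r \cdot \rball$ is a ring homomorphism, we obtain $F(rL) G(rL) = (FG)(rL) = 0$ as operators on $\hardy$ for every $0 < r < R$. Because $G \neq 0$ as a tuple of free formal power series, at least one entry $g_j$ has a nonzero coefficient $\hat{g}_{j,\omega}$, so $g_j(rL)\,1 = \sum_\omega \hat{g}_{j,\omega}\, r^{|\omega|}\, \fz^\omega \in \hardy$ is a nonzero vector; hence $G(rL) \neq 0$ as an operator. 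Any $h \in \hardy$ with $G(rL)h \neq 0$ then produces a nonzero vector of $\ker F(rL)$, showing $F(rL)$ is not injective for \emph{every} $r \in (0,R)$, which in particular contradicts injectivity for large $r$.

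For the ``only if'' direction, I would again argue by contrapositive. Suppose $F(rL)$ fails to be injective for arbitrarily large $r$. By the monotonicity result that $\dim \scr{W}_{\ker F(rL)}$ is an increasing function of $r$, the hypothesis upgrades to: there exists $r_* \in (0,R)$ such that $\ker F(rL) \neq \{0\}$ for every $r \in (r_*,R)$. This is precisely the structural input needed to run the argument of Lemma \ref{lem:local_trivialization}: for any $t > s > r > \max\{r_*, r_0\}$ I factor $F(tL) = V(L) T(L)$ via the inner--outer decomposition, form inner isometries $\Theta, \Xi$ with $\ran \Theta = \ran H_s(L)$ and $\ran \Xi = \ker T(\tfrac{s}{t}L)$, and set $S_r(L) = (\Theta, \Xi)(\tfrac{r}{s} L) \in \GL_n(\A(r))$. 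Since $\ker T(\tfrac{s}{t}L) \supseteq$ the (non-trivial) image of $\ker F(tL)$, its wandering dimension $\ell$ is at least $1$, and $F(rL) S_r = (\tilde F_r,\, 0_\ell)$ in $\A(r)$. I then invoke verbatim the Cauchy-sequence construction from the proof of Theorem \ref{thm:entires_semifir} to glue these local trivializations along a cofinal sequence $t_m \uparrow R$ into an invertible $S \in \GL_n(\scr{O}_d(R))$ satisfying $FS = (\tilde F, 0_\ell)$ globally. Taking $G := S \cdot e_n$, invertibility of $S$ forces $G \neq 0$ in $\scr{O}_d(R)^n$, while $FG = (FS)\, e_n = 0$; thus $F$ is not injective.

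The main obstacle is of a bookkeeping nature rather than a conceptual one: I have to verify that the monotonicity lemma really propagates non-injectivity at arbitrarily large $r$ to non-injectivity on a whole interval $(r_*, R)$, so that the factorization of Lemma \ref{lem:local_trivialization} produces stabilized dimensions $k$ and $\ell$ with $\ell \geq 1$; and I must check that no step of the inverse-limit approximation argument of Theorem \ref{thm:entires_semifir} used the auxiliary vector $G$ in any essential way beyond providing non-trivial kernels for $F(rL)$. Both of these are already implicit in the preceding material, so the proof is essentially an application of the tools at hand rather than the introduction of new ideas.
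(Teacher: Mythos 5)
Your proof is correct, and your easy direction coincides with the paper's (which simply notes that a nonzero $G$ with $FG=0$ produces nonzero vectors in $\nbker F(rL)$ for every $r$). For the hard direction you take a genuinely different, though closely related, route. The paper never builds a global trivializing matrix: it works directly with the kernels. After stabilizing $\ell(r)=\dim \scr{W}_{\nbker F(rL)}$, it observes that for $r_0\le r<s<R$ the restriction map $\nbker F(sL)\to\nbker F(rL)$ has dense range, and then runs the $\varepsilon_m$/Cauchy-sequence scheme of Theorem \ref{thm:entires_semifir} on kernel \emph{vectors} $G_m\in\nbker F(t_{m_0+m}L)$, with the extra condition $\varepsilon_{m+k}\le \|G_m\|/2^{m+k}$ forcing the glued limit to be nonzero; the inverse system of limits $C_m$ then defines a nonzero element of $\nbker F$. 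You instead rerun Lemma \ref{lem:local_trivialization} and the full matrix-valued gluing of Theorem \ref{thm:entires_semifir} to obtain $S\in\GL_n(\scr{O}_d(R))$ with $FS=(\tilde F,\,0_\ell)$, $\ell\ge 1$, and take $G=Se_n$, where nonvanishing of $G$ comes for free from invertibility of $S$. Both are legitimate: the paper's remark preceding the lemma confirms that $G$ entered Lemma \ref{lem:local_trivialization} only through nontriviality of the kernels, and inspection of the proof of Theorem \ref{thm:entires_semifir} shows its gluing step likewise never uses $G$. The paper's variant is lighter --- it glues vectors rather than invertible matrices, so it needs no analogue of Lemma \ref{lem:all_solutions} and no approximation of invertibles, only the lower bound $\|G_m\|/2^m$ to certify a nonzero limit --- whereas yours reuses heavier machinery wholesale but yields the stronger conclusion that $F$ is globally trivialized over $\scr{O}_d(R)$. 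One small imprecision: upgrading ``non-injective for arbitrarily large $r$'' to ``non-injective for all $r$'' does not require the monotonicity of the wandering dimension; it follows already from injectivity of the rescaling map on formal power series, which sends $\nbker F(sL)$ into $\nbker F(rL)$ for $r<s$.
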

\begin{proof}
Clearly, if $F$ is not injective, then $F(rL)$ is not injective for every $0<r<R$. Now assume that $F(rL)$ is not injective for every $0<r<R$. Let $\ell(r) = \nbdim \scr{W}_{\nbker F(rL)}$ be the wandering dimension of $\nbker F(rL)$. As in the proof of Lemma \ref{lem:local_trivialization}, the function $\ell(r)$ is monotonically decreasing, integer-valued, and bounded below by $1$ by our assumption. Therefore, there exists $0<r_0<R$, so that for $0< r_0 \leq r <R $, $\ell(r) =: \ell \in \N$ is constant. Therefore, for $0<r_0 \leq r < s < R$, the restriction map from $\nbker F(sL)$ to $\nbker F(rL)$ has dense range. In other other words, for every $\varepsilon > 0$ and every $G_r \in \nbker F(rL)$, there exists $H \in \nbker F(sL)$, such that $\|H(\tfrac{r}{s} L) 1 - G_r\| < \varepsilon$. Now we argue as in the proof of Theorem \ref{thm:entires_semifir}. Given the strictly increasing sequence $(t_m)_{m\in \N}$ so that $0<t_m <R$ and $t_m \uparrow R$, let $m_0 \in \N$ be the minimal value so that $t_{m_0} \geq r_0$. Then for any $m > m_0$, we inductively build sequences $\varepsilon_m > 0$ and $G_m \in \nbker F(mL)$ with the properties:
\bi
    \item[(i)] For every $m \in \N$, $\|G_{m+1}(\tfrac{t_m}{t_{m+1}} L) 1 - G_m\| < \varepsilon_m$;

    \item[(ii)] $\sum_{m=1}^{\infty} \varepsilon_m < +\infty$,

    \item[(iii)] For every $m \in \N$ and every $k \in \N$, $\varepsilon_{m+k} \leq \frac{\|G_m\|}{2^{m+k}}$.
\ei
Given such sequences, a Cauchy sequence argument, as in the proof of Theorem \ref{thm:entires_semifir} above, will produce an element of $\nbker F$. However, we must then also check that this element is not zero in order to show that $F$ is not injective. The third item above is designed to ensure that this limit is not $0$. To see this, fix $0 \neq G_1 \in \nbker F(t_{m_0 +1} L)$. Set $\varepsilon_1 = \min \{1, \tfrac{1}{2}\|G_1\| \}$. Assume that we have constructed $\varepsilon_1,\cdots,\varepsilon_m$ and $G_1,\cdots,G_m$ that satisfy the above three properties. Choose $G_{m+1} \in \nbker F(t_{m_0 + m + 1}L)$, such that $\|G_{m+1}(\tfrac{t_{m_0 + m}}{t_{m_0 + m+1}} L) 1 - G_m\| < \varepsilon_m$ and set $\varepsilon_{m+1} = \frac{1}{2^m} \min\{1, \tfrac{1}{2}\|G_1\|, \cdots, \tfrac{1}{2}\|G_m\| \}$. The properties (i--iii) that we demand are then satisfied just as in the proof of the previous Theorem \ref{thm:entires_semifir}. Now let $C_m = \lim_{k \to \infty} G_{m+k}(\tfrac{t_{m_0 + m}}{t_{m_0 + m+k}} L) 1$, which exists since the sequence on the right hand side is a Cauchy sequence. Moreover, by the third condition, $C_m$ is in the closed ball of radius $\tfrac{\|G_m\|}{2^m}$ around $G_m$ and, therefore, it is not zero. Since the inverse limit of the non-zero $C_m$ defines an element of $\scr{O}_d (R) \otimes \C^n$, this element must be non-zero. (Each $C_m$ is the restriction of this element to the NC row-ball of radius $0<t_{m_0+m}<R$.) Hence, $\nbker F$ is non-trivial and $F$ is not injective.
\end{proof}

The following theorem is a version of the Bergman Nullstellensatz for $\scr{O} _d (R)$, $R \in (0, +\infty]$.

\begin{thm} \label{thm:bergman}
Let $J \subseteq \scr{O}_d (R)$ be a finitely generated right ideal. Then, $J$ is closed and $J = I(\scr{Z}(J))$.
\end{thm}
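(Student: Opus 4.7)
The second claim $J = I(\scr{Z}(J))$ reduces immediately to the first via Lemma \ref{lem:approx_nullstellensatz}, which identifies $I(\scr{Z}(J))$ with $\overline{J}$ for every right ideal $J$. Hence the plan is to prove closedness of every finitely--generated right ideal $J \vartriangleleft \scr{O}_d(R)$ in the inverse-limit Fr\'echet topology.

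The starting move is to exploit the semifir structure established in Theorem \ref{thm:entires_semifir}: since $J$ is finitely generated, it is free as a right $\scr{O}_d(R)$--module, so the given generators may be replaced by a free basis $f_1, \ldots, f_n$. Writing $F = (f_1, \ldots, f_n) \in \scr{O}_d(R)^{1 \times n}$, the right-multiplication map $F \colon \scr{O}_d(R)^n \to \scr{O}_d(R)$ is injective with image $J$. By Lemma \ref{lem:big_enough_injective}, injectivity of this module map upgrades to injectivity of the operator $F(rL)$ on $\hardy \otimes \C^n$ for all $r \in (r_0, R)$ with $r_0$ sufficiently close to $R$. Then, for any such $r$ and any $s \in (r, R)$, Lemma \ref{lem:inner_restriction} supplies a factorization $F(rL) = V_s(\tfrac{r}{s}L)\,T_s(\tfrac{r}{s}L)$ in which $V_s(\tfrac{r}{s}L)$ is an isometry and $T_s(\tfrac{r}{s}L)$ is an invertible square matrix multiplier. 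Consequently $F(rL)$ has closed range in $\hardy$ and admits a bounded left inverse.

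Now take a sequence $h_k = F \cdot G_k \in J$ converging to $h \in \scr{O}_d(R)$. For each $r \in (r_0, R)$, convergence in the inverse-limit topology gives $h_k(rL) \to h(rL)$ in $\scr{B}(\hardy)$, and therefore $h_k(rL)\cdot 1 \to h(rL)\cdot 1$ in $\hardy$. Setting $\wt{G}_k^{(r)} := G_k(rL)\cdot 1 \in \hardy \otimes \C^n$, the identity $F(rL)\wt{G}_k^{(r)} = h_k(rL)\cdot 1$ together with the bounded left inverse of $F(rL)$ yields a limit $G^{(r)} \in \hardy \otimes \C^n$ satisfying $F(rL) G^{(r)} = h(rL)\cdot 1$. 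The global preimage $G$ is then to be glued from the family $\{G^{(r)}\}$: because convergence in $\hardy \otimes \C^n$ forces coefficient-wise convergence of free formal power series, the $\om$-coefficient of $G^{(r)}$ equals $\lim_k r^{|\om|} \hat{G}_{k,\om}$, so rescaling by $r^{-|\om|}$ produces a value $\hat{G}_\om$ independent of $r \in (r_0, R)$. These coefficients define a single free FPS $G = (G_1, \ldots, G_n)$; the condition that each $G^{(r)}$ lie in $\hardy \otimes \C^n$ then translates to $G$ having finite $\hardy(r) \otimes \C^n$--norm for every $r < R$, whence Lemma \ref{lem:inverse_limit} places $G$ in $\scr{O}_d(R)^n$. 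Matching FPS coefficients gives $F \cdot G = h$, so $h \in J$ and $J$ is closed.

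The step demanding the most care is the gluing of the Hilbert-space-level preimages $G^{(r)}$ into a single free formal power series with radius of convergence at least $R$: the left inverses produced by Lemma \ref{lem:inner_restriction} depend on both $r$ and the auxiliary $s$, so a priori one only obtains a compatible family of Hardy-space elements at different radii. The decisive observation is that the rescaling of coefficients by $r^{|\om|}$ eliminates this $r$-dependence at the coefficient level, and the inverse-limit presentation of $\scr{O}_d(R)$ then upgrades the coefficient-wise data to a genuine element of $\scr{O}_d(R)^n$.
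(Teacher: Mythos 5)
Your overall architecture matches the paper's (reduce to closedness via Lemma \ref{lem:approx_nullstellensatz}, pass to a free basis $F$ using the semifir property, upgrade injectivity of $F$ to injectivity of $F(rL)$ via Lemma \ref{lem:big_enough_injective}, then use the inner--outer factorization of Lemma \ref{lem:inner_restriction}), and your endgame of gluing the Hilbert-space limits coefficient-wise is a legitimate, arguably cleaner, substitute for the paper's appeal to the Montel theorem. But there is a genuine gap at the central analytic step: you assert that in the factorization $F(rL) = V_s(\tfrac{r}{s}L)\,T_s(\tfrac{r}{s}L)$ the factor $V_s(\tfrac{r}{s}L)$ is an isometry, and conclude that $F(rL)$ has closed range and a bounded left inverse. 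Lemma \ref{lem:inner_restriction} gives only that $V_s(\tfrac{r}{s}L)$ is \emph{injective}; the operator $V_s(L)$ is an isometry, but its rescaling to a strictly smaller radius generally is not, and an injective multiplier need not be bounded below. The claim is in fact false in general: already for $d=n=1$, $F(z)=z-1 \in \scr{O}_1$ and $r=1$ give $F(L)=L-I$, which is injective with invertible outer part after restriction but is not bounded below (its inner part at radius $s>1$, rescaled back to radius $1$, acquires a zero on the boundary). Without a bounded left inverse of $F(rL)$ you cannot conclude that $G_k(rL)1$ converges, so the limits $G^{(r)}$ on which your gluing argument rests are not known to exist.

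The paper repairs exactly this point by working at three radii $r<s<s'$ and never inverting the inner part after rescaling: one factors $F(s'L)=VT$ at the \emph{top} radius $s'$, where $V$ is a genuine isometry, applies $V^*$ to the convergent sequence $VT\,G_k(s'L)1 \to h(s'L)1$ to obtain convergence of $T\,G_k(s'L)1$ in $\hardy$, uses the estimate of Lemma \ref{lem:restriction_H^2_d} to convert this into operator-norm convergence of $T(\tfrac{s}{s'}L)G_k(sL)$ in $\A$, and only then inverts $T(\tfrac{r}{s'}L)$, which Lemma \ref{lem:inner_restriction} guarantees is square and invertible. With that substitution your coefficient-gluing conclusion (and the identification $\bigcap_{r<R}\hardy(r)=\scr{O}_d(R)$ via Lemma \ref{lem:inverse_limit}) goes through and yields $h=FG\in J$.
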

\begin{proof}
If we show that $J$ is closed, then by Lemma \ref{lem:approx_nullstellensatz}, we can deduce that $J = I(\scr{Z}(J))$. Since $J$ is finitely-generated and $\scr{O} _d (R)$ is a semifir, $J$ is free as a right $\scr{O}_d (R)-$module. Namely, there exists $n \in \N$ and  $F \in \scr{O}_d (R) ^{1\times n}$, such that $F \cdot (\scr{O}_d (R) \otimes \C^n) = J$ and $F$ is injective. Assume that $F G_n \to h \in \scr{O}_d (R)$. Since $F$ is injective, by the preceding lemma, there exists $r_0 > 0$, such that for every $0<r_0 \leq r <R$, $F(rL)$ is injective. By increasing $r_0$, if necessary, we may apply Lemma \ref{lem:inner_restriction}. By the definition of the topology on $\scr{O}_d (R)$, we have that $F(rL) G_n(rL) \to h(rL)$ in $\A$ for any $0<r<R$. Let $0< r_0 \leq r <s <s' <R$. Write $F(s'L) = V T$, the inner--outer decomposition. Since $h \in \scr{O} _d (R)$, $h(s'L)$ belongs to $\A$ for any $0<s'<R$. In particular, since $V (L) T (L) G_n(s'L)1 \to h(s'L)1$ in $\hardy$,
$T (L)  G_n (s'L) \rightarrow V (L)^* h(s'L) 1$ in $\hardy$, so that $T(\frac{s}{s'} L) G_n (sL)$ converges in $\A$ for any $0< r_0 \leq r < s <s' <R$. By Lemma \ref{lem:inner_restriction}, $T(\tfrac{r}{s'}L)$ is square and invertible. Hence, $G_n(rL)$ converges in $\A$. In particular, for every $0< r_0 \leq r <R$, $G_n(rL)$ is bounded. Therefore, by Theorem \ref{thm:montel}, $G_n$ has a subsequence that converges in $\scr{O}_d (R) \otimes \C^n$ to some $G$. By the continuity of multiplication $h = F\cdot G \in J$. Hence, $J$ is closed.
\end{proof}

\begin{remark}
The stabilization of the wandering dimension used both in the proof of Lemma \ref{lem:inner_restriction} and Lemma \ref{lem:big_enough_injective} can be viewed as analogous to the algebraic Mittag--Leffler condition on inverse limits \cite[\href{https://stacks.math.columbia.edu/tag/0594}{Section 0594}]{stacks-project}.
\end{remark}

\section{The universal skew field of fractions of $\scr{O} _d (R)$} \label{sec:usfield}

By Theorem \ref{thm:entires_semifir}, each of the rings $\scr{O} _d (R)$, $R \in (0, +\infty]$ is a semifir, hence they each have a \emph{universal skew field of fractions} \cite[Theorem 7.5.14]{Cohn} which we denote by $\scr{M} _d (R)$. As before we write $\scr{M} _d$ in place of $\scr{M} _d (+\infty)$ for the universal skew field of fractions of the ring of uniformly entire NC functions, $\scr{O} _d$. 

In this section we aim to describe these universal skew fields more concretely. Ideally, we would like to conclude that the skew fields $\scr{M} _d (R)$ consist of NC functions, which are in some sense, ``meromorphic" in $R \cdot \rball$. Although we expect this is true, we will see that this interpretation is potentially problematic in the absence of certain additional results that we have not yet been able to establish. 

The following description of universal skew fields of fractions is taken from \cite[Section 7.2]{Cohn}. Given a ring $\scr{R}$, a field, $\scr{U}$, is said to be the universal skew field of fractions of $\scr{R}$ if (i) $\scr{R}$ embeds into $\scr{U}$ and $\scr{U}$ is generated as a skew field by the image of $\scr{R}$ under this embedding, and, (ii) $\scr{U}$ has the following universal property: If a matrix, $A \in \scr{R} ^{n \times n}$, becomes invertible under a homomorphism from $\scr{R}$, into a skew field, $\scr{D}$, then the image of $A$ in $\scr{U} ^{n\times n}$ is invertible. Equivalently, $\scr{U}$ can be characterized using the concept of a local homomorphism. If $\scr{E}, \scr{F}$ are skew fields, a \emph{local homomorphism} from $\scr{E}$ to $\scr{F}$ is a ring homomorphism $\varphi : \scr{R} _0 \subseteq \scr{E} \rightarrow \scr{F}$ whose domain, $\scr{R} _0 \subseteq \scr{E}$, is a local subring so that the kernel of $\varphi$ consists precisely of those elements in $\scr{R} _0$ that are not invertible in $\scr{R} _0$. A skew field, $\scr{U}$, is then the universal skew field of fractions of a ring, $\scr{R}$, if (i) $\scr{U}$ is generated by the embedding of $\scr{R}$ into $\scr{U}$, and (ii$'$) every homomorphism from $\scr{R}$ into a skew field, $\scr{D}$, extends to a local homomorphism from $\scr{U}$ into $\scr{D}$ whose domain contains $\scr{R}$. The universal skew field of fractions of a ring is unique up to isomorphism, if it exists \cite[Section 7.2]{Cohn}.

In the case where $\scr{R}$ is a semifir, there are additional characterizations of its universal skew field. First, given any ring, $\scr{R}$ and $A \in \scr{R} ^{m \times n}$, the \emph{inner rank} of $A$ is the smallest $k \in \N$ so that $A=BC$ with $B \in \scr{R} ^{m \times k}$ and $C \in \scr{R} ^{k \times n}$. A square matrix, $A \in \scr{R} ^{n \times n}$ is then said to be \emph{full}, if it has inner rank equal to $n$. If $\scr{R}$ is a semifir that embeds into and generates a skew field, $\scr{U}$, the following are equivalent:
\bi
    \item[(i)] $\scr{U}$ is the universal skew field of fractions of $\scr{R}$,
    \item[(ii)] the embedding $\scr{R} \hookrightarrow \scr{U}$ preserves the inner rank,
    \item[(iii)] every full, square matrix over $\scr{R}$ is invertible over $\scr{U}$,
\ei
see \cite[Theorem 7.5.13]{Cohn}. Moreover, \cite[Theorem 7.5.13]{Cohn} further asserts that if $\Sigma$ denotes the set of all square full matrices over $\scr{R}$, then $\scr{U} \simeq \scr{R} _\Sigma$ is the $\Sigma-$rational closure of $\scr{R}$ in $\scr{U}$. That is, elements of $\scr{U}$ can be identfied, up to a certain equivalence relation given by \cite[Section 7.4, Lemma 7.4.1]{Cohn}, with the entries of the inverses of full matrices over $\scr{R}$. It follows, by applying successive Schur complements (and potentially permutations to enable the Schur complements) to any full matrix, $A \in \scr{R} ^{n \times n}$, that any element of $\scr{U}$ can be identified with a certain equivalence class of NC rational expressions composed with elements of $\scr{R}$. 

Hence, if $f \in \scr{M} _d (R)$, we can write $f = \fr (h_1, \cdots, h_k ) \in \fskewk \circ \scr{O} _d (R) ^{1\times k}$ as an NC rational expression composed with elements of $\scr{O} _d (R)$. In this sense, elements of $\scr{M} _d (R)$ are ``meromorphic NC expressions" in $R \cdot \rball$, and $\scr{M} _d$ can be thought of as the skew field ``globally meromorphic NC expressions".  In particular, any $\fr \in \fskewk$ has a \emph{formal realization} or \emph{formal linear representation} of the form $(A,b,c)$, where $A=(A_0,\cdots, A_k)$ is a $k+1-$tuple of $m\times m$ complex matrices, and $b,c \in \C ^m$ \cite[Section 4.1]{freefield}\cite{CR-freefield}\cite[Section 4.1]{HMS-realize}. Namely, 
$$ \fr (\fz _1, \cdots, \fz _k ) =  b^* (A_0 + A_1 \fz _1 + \cdots + A_k \fz_k ) ^{-1}  c, $$ where the non-monic, affine \emph{linear pencil}, 
$$ A (\fz) := A_0 \cdot 1 + A_1 \fz _1 + \cdots + A_k \fz_k \in \C \langle \fz _1, \cdots, \fz _k \rangle ^{m \times m}, $$ is full over $\C \langle \fz_1, \cdots, \fz _k \rangle$, and hence invertible over $\fskewk$.

Since our semifirs, $\scr{O} _d (R)$, are unital, we have $\C ^m \subseteq \scr{O}_d (R) ^m$ as the constant vector-valued functions. Any element $f = \fr (h_1, \cdots, h_k) \in \scr{M} _d (R)$, $h_i \in \scr{O} _d (R)$, can then be represented as 
$$ b^* A(\vec{h} (\fz) ) ^{-1} c; \quad \quad \vec{h} = (h_1, \cdots, h_k) \in \scr{O} _d (R) ^{1 \times k}, $$ for some formal realization, $(A,b,c)$, of $\fr \in \fskewk$. 

In addition to the semifirs, $\scr{O} _d (R)$, $R \in (0, \infty ]$, let $\scr{O} _d (0)$ denote the semifir of \emph{uniformly analytic NC germs at $0$} (initially defined and studied in \cite{KVV-local}), where $0 \in \C ^{1\times d} \subseteq \ncu$ denotes the origin of the NC universe. Namely, a uniformly analytic NC germ at $0$ is an equivalence class of uniformly analytic NC functions whose evaluations agree in some uniformly open neighbourhood of $0$. If $\fps = \C \langle \! \langle \fz _1, \cdots, \fz _d \rangle \! \rangle$ denotes the ring of all free formal power series, this is also a semifir by \cite[Proposition 2.9.19]{Cohn}, and we have the chain of embeddings of semifirs:
$$ \scr{O} _d (R) \hookrightarrow \scr{O} _d (r) \hookrightarrow \scr{O} _d (0) \hookrightarrow \fps, $$ for any $r < R \in (0, +\infty]$. An embedding of rings is said to be \emph{honest} if it preserves fullness of matrices. Each of the embeddings above can be viewed as an embedding into a skew field, \emph{e.g.} the embedding $\scr{O} _d (R) \hookrightarrow \scr{O} _d (r)$ can be viewed as an embedding $\scr{O} _d (R) \hookrightarrow \scr{M} _d (r) \supsetneqq \scr{O} _d (r)$. Hence, if any of these embeddings are honest then they preserve the inner rank of all matrices by \cite[Corollary 5.4.10]{Cohn}. By \cite[Proposition 5.3]{KVV-local}, the final embedding, $\scr{O} _d (0) \hookrightarrow \fps$ is totally inert, hence honest. (The concept, \emph{totally inert} is a stronger condition that implies honesty.) We remark that since $\scr{O} _d (R) \hookrightarrow \scr{O} _d (0)$, the evaluations of elements of $\scr{O} _d (R)$ in $R \cdot \rball$ are generically invertible by \cite[Theorem 5.7]{KVV-local}. Since elements of $\scr{M} _d (R)$ can be identified with NC rational expressions in $\scr{O} _d (R)$, it follows that any $f \in \scr{M} _d (R)$ which is defined at $0 \in \C ^{1\times d}$ has well-defined evaluations in a level-wise Zariski-dense NC subset of $R \cdot \rball$. 

\emph{However}, there could exist, in prinicple, a non-zero element $0\neq f \in \scr{M} _d (R)$, with $0 \in \nbdom f$, which evaluates identically to $0$, everywhere it is defined in $R \cdot \rball$. That is, such an $f$ would be a ``meromorphic identity" in the sense of Amitsur \cite{Amitsur}. Here, a celebrated theorem of Amitsur implies that there are no ``rational identities": If $\fr \in \fskew$ is an NC rational function that evaluates to $0$ everywhere it is defined in $\ncu$, then $\fr =0$ \cite[Theorem 14]{Amitsur} \cite[Chapter 8]{Rowen80}. Although we expect that there should be an extension of Amitsur's theorem to $\scr{M} _d (R)$, $R \in (0, +\infty]$, we have not yet been able to establish this. 

Note that \cite[Theorem 3.9]{KVV-local} proves that there are no ``formal meromorphic identities" and hence there are no identities in $\scr{M} _d (0)$, the skew field of ``uniformly meromorphic NC germs at $0$". That is, if $\skewfps$ denotes the universal skew field of fractions of the semifir $\fps$, then if $f \in \skewfps$ vanishes on its domain in any stably-finite algebra, then $f=0 \in \skewfps$, and there is no $f \in \skewfps$ that is undefined (has empty domain) in any stably-finite algebra. Since $\scr{M} _d (0) \hookrightarrow \skewfps$ is an honest embedding, it follows that $\scr{M} _d (0)$ also contains no identities. Alternatively, any $f \in \scr{M} _d (0)$ has well-defined evaluations in an NC subset of some uniformly open NC neighbourhood of $0$ that is level-wise Zariski-dense at all sufficiently high levels (since it is an NC rational expression composed with uniformly analytic NC germs), and \cite[Theorem 3.9]{KVV-local} then implies that if $f$ evaluates to $0$ everywhere it is defined in a uniformly open neighbourhood of $0 \in \ncu$, then $f =0 \in \scr{M} _d (0)$. The potential existence of non-trivial meromorphic identities raises an existential problem for our desired interpretation of the skew fields, $\scr{M} _d (R)$, as fields of uniformly meromorphic NC functions. Indeed, if $0 \neq f \in \scr{M} _d (R)$, $0 \in \nbdom f$, is a meromorphic identity that vanishes on its level-wise Zariski-dense (and uniformly open) NC domain in $R \cdot \rball$, then $0 \neq f^{-1} \in \scr{M} _d (R)$ is defined nowhere in the NC universe and hence neither $f$ nor $f^{-1}$ can be viewed as NC functions. We conjecture that for any $r<R \in [0, +\infty]$, the embedding $\scr{O} _d (R) \hookrightarrow \scr{O} _d (r)$ is totally inert, hence honest. If this is true, we would obtain an extension of Amitsur's theorem asserting that there are no meromorphic identities as a corollary.

\section{Evaluation in stably finite algebras} \label{sec:application}

Recall that $\scr{M} _d$ denotes the universal skew field of the semifir of uniformly entire NC functions, $\scr{O} _d$. (In this section, we focus on $\scr{M} _d = \scr{M} _d (+\infty)$; our results here extend readily to $\scr{M} _d (R)$ for any $R>0$.) We will refer to elements of $\scr{M} _d$ as \emph{uniformly meromorphic NC expressions} instead of ``uniformly meromorphic NC functions" since, as described in the previous section, there may exist elements of $\scr{M} _d$ that cannot be identified with NC functions. By a theorem of P.M. Cohn, elements of the free skew field, $\fskew$, \emph{i.e.,} NC rational functions, have well-defined evaluations and domains in any stably finite (unital) algebra over $\C$ \cite[Theorem 7.3.2]{Cohn2}, \cite{Cohn-uni}. (Also see \cite[Theorem 4.4]{freefield}.) Our goal in this section is to extend this result to evaluation of uniformly meromorphic NC expressions in a suitable class of stably-finite algebras.

As discussed in the previous section, any element in $\scr{M} _d$ is an equivalence class of NC rational expressions composed with elements of $\scr{O} _d$. Namely, if $f = \fr \circ (h_1, \cdots, h_k)$, where $\fr \in \fskewk$ and $h_i \in \scr{O} _d$, then there exist $b,c \in \C ^n$ and $A \in \C ^{n\times n} \otimes \C ^{1 \times (k+1)}$ for some finite $n \in \N$ so that,
\be f (\fz_1, \cdots, \fz_d) = b^* A (h_1 (\fz), \cdots, h_k (\fz ) ) ^{-1} c, \label{Mdexpr}, \ee where 
$$ A(\fz_1, \cdots, \fz _k) := 1\otimes A_0 +\fz _1 \otimes A_1 + \cdots + \fz _k \otimes A_k, $$ is the affine linear pencil of $A$. Setting $\vec{h} := (h_1, \cdots, h_k)$, we will write
$A (h_1 (\fz), \cdots, h_k (\fz ) ) = A (\vec{h} (\fz))$. Any element, $f \in \scr{M} _d$ is then an equivalence class of such rational expressions in $\scr{O} _d$, where the equivalence relation is described in \cite[Lemma 7.4.1]{Cohn}. If $f \in \scr{M} _d$ is such that there exists a $k \in \N$ and $(A,b,c) \in \C ^{(n\times n) \cdot (k+1)} \times \C ^n \times \C ^n$ and $\vec{h} \in \scr{O} _d ^{1\times k}$ so that the expression given in Equation (\ref{Mdexpr}) is in the equivalence class of $f$, we will write $f \sim (A,b,c; \vec{h})$ and we will say that $(A,b,c; \vec{h})$ is a \emph{formal representation} of $f$.   

Let $\scr{A}$ be a (unital) stably--finite algebra over $\C$. We say that a $d-$tuple of elements of $\scr{A}$, $a := (a_1, \cdots, a_d)$ belongs to the $\scr{A}-$domain of $f$, $\mr{Dom} _\scr{A} (f)$, if there exists: (i) a unital homomorphism $\varphi _a : \scr{O} _d \rightarrow \scr{A}$ obeying $\varphi _a (\fz _i) = a_i$ for each $1\leq i \leq d$, and (ii), 
a formal representation $f \sim (A,b,c; \vec{h})$ so that the linear pencil $A \circ \vec{h} (a) := A \circ \varphi _a (\vec{h})$ is invertible in $\scr{A} ^{n\times n} =\scr{A} \otimes \C ^{n\times n}$. One then defines 
$$ f (a) := 1_\scr{A} \otimes b^* A (\vec{h} (a) ) ^{-1} 1 _\scr{A} \otimes c. $$ Similarly, given a formal representation, $(A,b,c; \vec{h}) \in \C ^{(n\times n)\cdot (k+1)} \times \C ^n \times \C ^n \times \scr{O} _d ^k$, of an element of $\scr{M} _d$, we will say that the $d-$tuple, $a=(a_1, \cdots, a_d) \in \scr{A} ^{1\times d}$ belongs to $\scr{D} _{\scr{A}} (A; \vec{h})$, if there is a homomorphism, $\varphi _a$, obeying $\varphi _a (\fz _i) = a_i$, as before, and if the affine linear pencil $A(\vec{h} (a))$ is invertible in $\scr{A} ^{n \times n}$.

Of course, there is the question of whether the value of $f (a)$ is well defined. Namely, it is not obvious whether or not this definition of $f(a)$, for $a \in \mr{Dom} _\scr{A} (f)$, is independent of the choice of formal representation for $f$. That this is indeed always well-defined is the essence of the following theorem.

\begin{thm} \label{thm:stabeval}
Let $\scr{A}$ be a stably--finite algebra over $\C$. Given $f \in \scr{M} _d$ with formal representations $(A,b,c; \vec{h})$ and $(A',b',c'; \vec{h}')$, suppose that $a = (a_1, \cdots, a_d) \in \scr{A} ^{1 \times d}$ belongs to $\scr{D} _\scr{A} (A ; \vec{h}) \cap \scr{D} _\scr{A} (A'; \vec{h} ')$. Then,
$$ 1_\scr{A} \otimes b^* A (\vec{h} (a)) ^{-1} 1 _\scr{A} \otimes c = 1_\scr{A} \otimes b^{'*} A' (\vec{h}' (a)) ^{-1} 1 _\scr{A} \otimes c'. $$ That is, $f$ is well-defined on $\mr{Dom} _\scr{A} (f) \subseteq \scr{A} ^{1\times d}$. 
\end{thm}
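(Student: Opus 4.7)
The plan is to adapt Cohn's evaluation argument for the free skew field in stably finite algebras \cite[Theorem 7.3.2]{Cohn2} to our setting, leveraging Theorem \ref{thm:entires_semifir}: since $\scr{O}_d$ is a semifir, $\scr{M}_d$ is its universal skew field of fractions. The key vehicle is a block ``comparison matrix'' over $\scr{O}_d$ whose non-fullness is inherited from $\scr{M}_d$ via Cohn's rank-preservation theorem \cite[Theorem 7.5.13]{Cohn}, and whose image under $\varphi_a$ encodes the desired equality.

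Concretely, I would form
\[
M := \begin{pmatrix} A(\vec h) & 0 & c \\ 0 & A'(\vec h') & c' \\ b^* & -b'^* & 0 \end{pmatrix} \in \scr{O}_d^{(n+n'+1) \times (n+n'+1)},
\]
where $b,c \in \C^n$ and $b',c' \in \C^{n'}$ are viewed as constant functions. Over $\scr{M}_d$, the block $P := \mr{diag}(A(\vec h), A'(\vec h'))$ is invertible, and standard Schur-complement row/column operations reduce $M$ to $\mr{diag}(P,\, b'^* A'(\vec h')^{-1} c' - b^* A(\vec h)^{-1} c)$. Since both triples represent the same element $f \in \scr{M}_d$, the scalar Schur complement equals $f - f = 0$, so $M$ has inner rank at most $n+n'$ over $\scr{M}_d$; that is, $M$ is not full. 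By \cite[Theorem 7.5.13]{Cohn}, the embedding $\scr{O}_d \hookrightarrow \scr{M}_d$ preserves inner ranks, so $M$ is also not full over $\scr{O}_d$ and admits a factorization $M = BC$ with $B \in \scr{O}_d^{(n+n'+1) \times (n+n')}$ and $C \in \scr{O}_d^{(n+n') \times (n+n'+1)}$. Extending $\varphi_a$ entrywise to matrices gives $\varphi_a(M) = \varphi_a(B) \varphi_a(C)$, so $\varphi_a(M)$ is not full over $\scr{A}$.

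By hypothesis, $\varphi_a(P)$ is invertible in $\scr{A}^{(n+n') \times (n+n')}$, so the analogous Schur-complement operations, carried out over $\scr{A}$ by invertible $(n+n'+1) \times (n+n'+1)$ matrices, reduce $\varphi_a(M)$ to $\mr{diag}(\varphi_a(P), \lambda)$ where
\[
\lambda := b'^* A'(\vec h'(a))^{-1} c' - b^* A(\vec h(a))^{-1} c
\]
is precisely the difference of the two candidate values. A final multiplication by the invertible $\mr{diag}(\varphi_a(P)^{-1}, 1)$ exhibits $\mr{diag}(I_{n+n'}, \lambda)$ as non-full over $\scr{A}$, so we may write $\mr{diag}(I_{n+n'}, \lambda) = UV$ with $U \in \scr{A}^{(n+n'+1) \times (n+n')}$ and $V \in \scr{A}^{(n+n') \times (n+n'+1)}$. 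Partitioning $U = \bsm U_1 \\ U_2 \esm$ and $V = (V_1, V_2)$ compatibly, the identity $U_1 V_1 = I_{n+n'}$ combined with stable finiteness of $\scr{A}$ forces $V_1 U_1 = I_{n+n'}$; then $U_1, V_1$ are invertible, and the remaining relations $U_1 V_2 = 0$ and $U_2 V_1 = 0$ collapse to $V_2 = 0$ and $U_2 = 0$, yielding $\lambda = U_2 V_2 = 0$, as required.

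The crux of the argument is the transport of non-fullness from $\scr{M}_d$ back to $\scr{O}_d$ in the second paragraph; this rank-preservation under an embedding into a skew field is not automatic, but is a defining feature of the \emph{universal} skew field of a semifir via \cite[Theorem 7.5.13]{Cohn}, which Theorem \ref{thm:entires_semifir} supplies for $\scr{O}_d$. Once that transport is in place, the homomorphism $\varphi_a$ automatically carries non-fullness to $\scr{A}$, and stable finiteness supplies the final cancellation; the latter is the same bookkeeping that underlies Cohn's classical proof for NC rational functions and generalizes verbatim to our situation.
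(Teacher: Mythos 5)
Your proof is correct and follows essentially the same route as the paper: the same comparison matrix (up to a permutation of blocks), non-fullness over $\scr{M}_d$ via the Schur complement, transport of inner rank back to $\scr{O}_d$ by the universal-skew-field/honesty property supplied by Theorem \ref{thm:entires_semifir}, and then evaluation under $\varphi_a$. The only cosmetic difference is that the paper argues by contradiction, citing \cite[Proposition 5.4.6]{Cohn} to convert ``the two values differ'' into ``$M(a)$ is full over the stably finite algebra $\scr{A}$,'' whereas you argue directly and re-derive that step by hand from the factorization of $\mathrm{diag}(I,\lambda)$.
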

The following argument is essentially the same as that used in the proof of \cite[Theorem 4.4]{freefield}.
\begin{proof}
Suppose that $a = (a_1, \cdots, a_d)$ belongs to the intersection of the $\scr{A}-$domains,  $\scr{D} _\scr{A} (A ; \vec{h}) \cap \scr{D} _\scr{A} (A'; \vec{h} ')$, of the formal representations, $(A,b,c;\vec{h} ) \sim f \sim (A',b',c'; \vec{h} ')$ of $f \in \scr{M} _d$, and yet that the evaluations of these representations at $a$ do not agree,
$$ F(a) := 1_\scr{A} \otimes b^* A (\vec{h} (a)) ^{-1} 1 _\scr{A} \otimes c \neq 1_\scr{A} \otimes b^{'*} A' (\vec{h}' (a)) ^{-1} 1 _\scr{A} \otimes c' =: G(a).$$ 

Here, suppose that $A \in \C ^{(n\times n)\cdot (k+1)}$ and $A' \in \C ^{(n' \times n') \cdot (k'+1)}$ and set $m:=n+n'$. Then, consider the matrix
$$ M (\fz) := \bpm 0 & b^* & b^{'*} \\ c & A(\vec{h} (\fz)) & 0 \\ -c' & 0 & A' (\vec{h}' (\fz)) \epm \in \scr{O} _d ^{(m+1) \times (m+1)}.$$ By \cite[Proposition 5.4.6]{Cohn}, if we assume that $F(a) \neq G(a)$, it follows that the inner rank of $M(a) = \varphi _a \circ M (\fz)$ is $m +1$ over $\scr{A}$, as $\scr{A}$ is stably finite. 

On the other hand, $M (\fz ) \in \scr{O} _d  ^{(m+1)\times (m+1)} \subseteq \scr{M} _d ^{(m+1) \times (m+1)}$. Since $\scr{M} _d$ is a skew field, it is stably finite, and since $f(\fz) = F(\fz) = G(\fz ) \in \scr{M} _d$, we obtain that the inner rank of $M$ over $\scr{M} _d$ is $m$, again by \cite[Proposition 5.4.6]{Cohn}. Since $\scr{O} _d$ is a semifir and $\scr{M} _d$ is its universal skew field of fractions, the embedding $\scr{O} _d \hookrightarrow \scr{M} _d$ is honest and hence inner-rank preserving by \cite[Proposition 5.4.10]{Cohn}. We conclude that $M$ has inner rank $m$ over $\scr{O} _d$.

If $M = B C$ is an inner-rank factorization of $M$ over $\scr{O} _d$, then $B \in \scr{O} _d ^{(m+1) \times m}$ and $C \in \scr{O} _d ^{m \times (m +1)}$. But evaluation at $a \in \scr{A} ^{1 \times d}$ then yields that $M(a) = B(a) C(a) \in \scr{A} ^{(m+1) \times (m+1)}$ has a rank factorization of inner rank
$m < m+1$, a contradiction.
\end{proof}

The above result raises the natural question as to when the homomorphism $\varphi _a$, $a= (a_1, \cdots, a_d) \in \scr{A} ^{1\times d}$ is well-defined. In what follows, we construct a large class of stably-finite algebras, $\scr{A}$, including all finite $C^*-$algebras, which have the property that given any $d-$tuple of elements $a \in \scr{A} ^{1\times d}$, the unital homomorphism $\varphi _a : \scr{O} _d \rightarrow \scr{A}$ defined by $\varphi _a (\fz _i) = a_i$, $1 \leq i \leq d$, is well-defined.  

Any uniformly entire NC function, $h \in \scr{O} _d$, can be identified with a free formal power series with infinite radius of convergence, 
$$ h(\zeta) = \sum _{\om \in \word} \hat{h} _\om \fz ^\om; \quad \quad 0 = \frac{1}{R_h} = \lim _{n\rightarrow \infty} \sqrt[2n]{\sum _{|\om| = n} | \hat{h} _\om | ^2}. $$ In order to define evaluation of an entire $h \in \scr{O} _d$ in a unital, stably-finite algebra, $\scr{A}$ over $\C$, we can equip $\scr{A}$ with a suitable topology so that this infinite power series converges to an element of $\scr{A}$. For example, if $\scr{A}$ is a unital, separable and stably-finite $C^*-$algebra, then by the Gelfand--Naimark--Segal construction, we can assume, without loss of generality that $\scr{A} \subseteq \scr{B} (\cH)$ for a separable and complex Hilbert space, $\cH$. In this case given $a_1, \cdots, a_d \in \scr{A} \subseteq \scr{B} (\cH)$, the partial sums of the above FPS of $h$, evaluated at $(a_1, \cdots, a_d)$ converge absolutely and uniformly on any row-ball in $\scr{A} ^{1\times d} \subseteq \scr{B} (\cH) ^{1\times d}$ by \cite[Theorem 1.1]{Pop-freeholo}. Namely, for any fixed $r>0$, setting
$$ r \cdot \mathbb{B} ^d (\cH) := \left\{ A \in \scr{B} (\cH) ^{1\times d} | \ \| (A_1, \cdots, A_d ) \| _{\scr{B} (\cH \otimes \C ^d, \cH)} < r \right\}, $$ 
the free polynomial partial sums,
$$ h_n (\zeta) := \sum _{\ell =0} ^n  \sum _{|\om | = \ell} \hat{h} _\om \fz ^\om, $$  converge uniformly to $h(A)$ when evaluated in $r \cdot \B ^d (\cH)$, and they also converge absolutely and uniformly in $r \cdot \B^d (\cH)$ the sense that 
$$ \sum _{\ell =n} ^\infty \left\| \sum _{|\om| = \ell} \hat{h} _\om A^\om \right\| _{\scr{B} (\cH)}  \rightarrow 0,$$ as $n\rightarrow \infty$, uniformly in $r\cdot \B ^d (\cH)$. Since $\scr{A}$ is closed in the operator-norm topology and any $a \in \scr{A} ^{1\times d}$ belongs to the row-ball of finite radius, $\| a \| _{\mr{row}} \cdot \B ^d (\cH)$, it follows that $h(a) \in \scr{A}$ for any $d-$tuple $a \in \scr{A} ^{1\times d}$. 

The proof of \cite[Theorem 1.1]{Pop-freeholo} explicitly uses the $C^*-$identity for operators. Namely, given $A \in \scr{B} (\cH, \cJ)$, where $\cH, \cJ$ are complex Hilbert spaces, $\| A ^* A \| = \| A \| ^2$. Explicitly, if $h (\fz ) = \sum _{\om \in \word} \hat{h} _\om \fz ^\om \in \fps$ has radius of convergence $R_h >0$, and $A \in \scr{B} (\cH) ^{1\times d}$, then one estimates:
\ba
    \| h (A ) \| _{\scr{B} (\cH)} & \leq & \sum _{\ell =0} ^\infty \left\| \sum _{|\om | = \ell } \hat{h} _\om A ^\om \right\| \\
    & = & \sum _{\ell =0} ^\infty \left\| \mr{row} (A^\om )_{|\om| =\ell} \cdot  \mr{col} (\hat{h} _\om)_{|\om | = \ell} \right\| \\
    & = & \sum _{\ell =0} ^\infty \sqrt{\left\| \mr{Ad} _{A, A^*} ^{\circ \ell} (I_\cH) \right\|} \sqrt{\sum _{|\om| =\ell} | \hat{h} _\om | ^2}.
\ea
In the above, $\mr{Ad} _{A, A^*} : \scr{B} (\cH) \rightarrow \scr{B} (\cH)$ is the completely positive linear map of \emph{adjunction} by $A$ and $A^*$, 
$$ \mr{Ad} _{A,A^*} (T) := \sum _{j=1} ^d A_j T A_j ^*. $$ 
Given a $d-$tuple of bounded linear operators, $A \in \scr{B} (\cH) ^{1\times d}$ one defines the \emph{joint spectral radius} of $A$, $\rho _\sigma (A)$ as the square root of the spectral radius of the bounded linear map, $\mr{Ad} _{A,A^*}$, viewed as an element of the unital Banach algebra $\scr{B} (X)$, where $X = \scr{B} (\cH)$ \cite{Popescu-similarity}. Namely,
$$ \rho _\sigma (A) ^2 = \rho _\sigma (\mr{Ad} _{A,A^*}) = \lim _{\ell \rightarrow \infty}
\sqrt[\uproot{3} \ell ]{\left\| \mr{Ad} _{A, A^*} ^{\circ \ell} (I_\cH) \right\|}. $$
Hence, since 
$$ \frac{1}{R_h} = \limsup _{\ell \rightarrow \infty} \sqrt[\uproot{3} 2\ell]{\sum _{|\om| =\ell} |\hat{h} _\om|^2 }, $$ if the joint spectral radius of $A \in \scr{B} (\cH) ^{1\times d}$ is less than $R_h$, we have absolute convergence of the power series $h(A)$ (in operator norm).

The assumption that $\scr{A}$ is a unital $C^*-$algebra is, however, a more restrictive assumption than necessary to make sense of evaluations of $h \in \scr{O} _d$ in $\scr{A}$. To this end, we introduce a larger class of algebras, \emph{local operator algebras}, based on the construction of \emph{local operator spaces} in \cite{EffWeb,Webster}. In particular, any inverse limit of operator algebras, such as $\scr{O} _d (R)$, $r>0$, will be a local operator algebra.

\begin{defn}
Let $\scr{A}$ be a unital algebra over $\C$. A \emph{matrix semi-norm} over $\scr{A}$ is a collection of semi-norms, $\rho = (\rho _n : \scr{A} ^{n\times n} \rightarrow [0, +\infty) ) _{n=1} ^\infty$ satisfying the axioms:
\bn
\item Given any $a \in \scr{A} ^{m\times m}$ and $b\in \scr{A} ^{n\times n}$,
$$ \rho _{n+m} (a\oplus b) = \max \left\{ \rho _m (a), \rho _n (b) \right\}. $$
\item For any $\alpha \in \C ^{m \times n}$, $\beta \in \C ^{n\times m}$ and $a \in \scr{A} ^{n\times n}$, 
$$ \rho _m (\alpha a \beta) \leq \| \alpha \| \rho _n (a) \| \beta \|. $$ 
\en
We will say that multiplication in $\scr{A}$ is $\rho-$\emph{completely contractive}, or that $\rho$ is an algebra matrix semi-norm if it further obeys a third axiom: 
\bn
\item[3.] For any $A,B \in \scr{A} ^{n\times n}$, 
$$ \rho _n (A B) \leq \rho _n (A) \cdot \rho _n (B). $$
\en
\end{defn} 

Given an algebra matrix semi-norm, $\rho$, on $\scr{A}$, let
$$ \scr{N} _\rho := \{ a \in \scr{A} | \ \rho _1 (a) =0 \}. $$ 
This is clearly a two-sided ideal in $\scr{A}$, so that the unit, $1$, of $\scr{A}$ belongs to $N_\rho$ if and only if $\rho$ is identically $0$. Hence, assuming $\rho \neq 0$, $A_\rho := A/ N_{\rho}$ is again a unital and stably finite algebra with well-defined multiplication,
$$ (a+N_\rho)(b+N_\rho) = ab + N_\rho. $$ Moreover, if we define 
$$ \| a + N_\rho \|_\rho := \rho (a), $$ then $\| \cdot \|_\rho$ is a matrix-norm on $\scr{A} _\rho$ in the sense of Ruan, and moreover if multiplication in $\scr{A}$ is completely $\rho-$contractive then for any $A,B \in \scr{A} _\rho ^{n\times n}$, 
$$ \| A \cdot B \| _\rho \leq \| A \| _{\rho} \| B \| _{\rho} = \rho _n (A) \rho _n (B), $$ so that ``multiplication is completely contractive" and  $(A_\rho, \| \cdot \| _\rho)$ is an abstract operator algebra in sense of Blecher, Ruan and Sinclair \cite[Corollary 16.7]{Paulsen-cbmaps}. We will further assume that our matrix semi-norms are `unital' in the sense that $\rho (1) = 1$. In this case $(A_\rho, \| \cdot \| _\rho)$ is a unital abstract operator algebra, and hence is completely isometrically isomorphic to a unital algebra of operators on a Hilbert space by the Blecher--Ruan--Sinclair theorem.

\begin{defn}
A (unital) \emph{local operator algebra} is a unital and separable topological algebra, $\scr{A}$, over $\C$, equipped with a family of unital and matrix-multiplicative semi-norms that generate the topology on $\scr{A}$.  
\end{defn}

The inverse limit of any family of unital operator algebras will be a local operator algebra. For example, each of the ball semifirs of uniformly analytic NC functions constructed in this paper, $\scr{O} _d (R)$, $R>0$, are local operator algebras. Conversely, any local operator algebra can be viewed as an inverse limit of abstract operator algebras \cite{Webster}. If $\scr{A}$ is a local operator algebra, $a=(a_1, \cdots, a_d) \in \scr{A} ^{1\times d}$ and $h \in \scr{O} _d$, the free polynomial partial sums, $h_n (a)$ then converge to an element, $h(a)$, in $\scr{A}$. Indeed, to prove this, one need only show that given any $\rho \in \La$, that the sequence $h_n (a + N_\rho)$ is Cauchy in the abstract, unital and separable operator algebra $A/N_\rho$, and this follows immediately from \cite[Theorem 1.1]{Pop-freeholo}, since $h$ has infinite Cauchy--Hadamard radius of convergence and we can view $A/N_\rho$ as an algebra of operators on a separable, complex Hilbert space. That is, if $\scr{A}$ is a unital local operator algebra, then for any $d-$tuple $a=(a_1, \cdots, a_d) \in \scr{A} ^{1\times d}$, the unital homomorphism, $\varphi _a : \scr{O} _d \rightarrow \scr{A}$ is well-defined. Hence, if a unital local operator algebra is also stably-finite, Theorem~\ref{thm:stabeval} applies and shows that we have well-defined $\scr{A}-$domains and evaluations for any $f \in \scr{M} _d$. That is, we have shown the following.

\begin{cor}
Let $\scr{A}$ be a unital local operator algebra. Then for any $a = (a_1, \cdots, a_d) \in \scr{A} ^{1\times d}$, there is a well-defined unital homomorphism $\varphi _a : \scr{O} _d \rightarrow \scr{A}$ obeying $\varphi _a (\fz _i) = a_i$. If $\scr{A}$ is also stably-finite, then any $f \in \scr{M} _d$ has a well-defined $\scr{A}-$domain and evaluations,
$$ \mr{Dom} _{\scr{A}} (f) := \{ a \in \scr{A} ^{1\times d} | \ f \sim (A,b,c;\vec{h}) \ \mbox{and} \ A(\vec{h} (a))^{-1} \, \exists \}, $$
$$ f(a) = 1_\scr{A} \otimes b^* A (\vec{h} (a)) ^{-1} 1_\scr{A} \otimes c; \quad \quad a \in \mr{Dom} _\scr{A} (f).$$
\end{cor}

In particular, each $\scr{O} _d (R)$, $R \in (0, +\infty]$ is a stably-finite local operator algebra. 

\subsection{Strong convergence of meromorphic functions}

Let $(\cA, \tau)$ be a tracial $C^*-$algebra, \emph{i.e.} a unital $C^*-$algebra equipped with a faithful and normalized (unital) trace, $\tau \in \cA ^*$. Recall from \cite{Hup-realize2} that a tuple of elements $s_1,\cdots, s_d \in \cA$, is called \emph{free}, if for every $p_1, \cdots, p_n \in \fp$, $n\in \N$, and every word $\om = i_1 \cdots i_n \in \word$, obeying $i_k \neq i_{k+1}$, the assumption that $\tau(p_j(s_{i_j})) = 0$ implies that $\tau(p_1(s_{i_1}) p_2(s_{i_2}) \cdots p_n(s_{i_n})) = 0$. Such a free family, $s=(s_1, \cdots, s_d)$, is called a \emph{free semicircular family}, if for every $k \in \N$ and every $1 \leq j \leq d$, we have both $s_j = s_j^*$ and 
\[
    \tau(s_j^k) = \begin{cases} \frac{2}{k+2} \binom{k}{k/2}, & k \text{ is even}; \\ 0, & k \text{ is odd}. \end{cases}
\]
Let us fix a free semicircular tuple $s_1,\cdots,s_d$. Let $\sgrm(n,\tfrac{1}{n})$ denote the class of all $n \times n$ self-adjoint random matrices on a probability space $\Omega$ with independent and identically distributed (i.i.d.) Gaussian entries with mean $0$ and variance $\frac{1}{n}$. This normalization of the variance goes back to Voiculescu \cite{Voic-limit_laws}. In \cite[Theorem A]{Hup-realize2} Haagerup and Thorbj{\o}rnsen proved that if we take a sequence $(X_1^{(n)},\cdots,X_d^{(n)})$ of $d$-tuples of independent elements of $\sgrm(n,\tfrac{1}{n})$ for all $n \in \N$, then there exists a measure zero set $\Xi \subset \Omega$, such that for every $p \in \C \langle z \rangle$ and every $\omega \in \Omega \setminus \Xi$,
\[
\lim_{n\to \infty} \left\| p(X_1^{(n)}(\omega),\cdots,X_d^{(n)}(\omega) ) \right\| = \|p(s_1,\cdots,s_d)\|.
\]
This result was extended to matrix-valued polynomials by Haagerup, Schultz, and Thorbj{\o}rnsen in \cite{Hup-realize}. Using the terminology of \cite{Yin-strong_conv}, we will say that the sequence $(X_1^{(n)}, \cdots, X_d^{(n)})$ \emph{converges strongly} to the free semicircular tuple $s_1,\cdots,s_n$. In \cite[Corollary 1]{Yin-strong_conv}, Yin showed that if $\fr$ is a rational expression with the tuple $(s_1,\cdots,s_d)$ in its domain, then for $n$ big enough and almost every $\omega \in \Omega$, $(X_1^{(n)}(\omega),\cdots,X_d^{(n)}(\omega))$ lies in the domain of $\fr$ and 
\[
\lim_{n \to \infty} \left\| \fr (X_1^{(n)}(\omega),\cdots,X_d^{(n)}(\omega)) \right\| = \|\fr (s_1,\cdots,s_d)\|.
\]
It follows that,
\[
\lim_{n\to \infty} \mathbb{E} \left(\frac{1}{n} \tr(\fr (X_1^{(n)},\cdots,X_d^{(n)}) ) \right) = \tau(\fr (s_1,\cdots,s_n)).
\]
Here, $\mathbb{E}$ stands for the expectation of a random variable. This result extends the result of Haagerup and Thorbj{\o}rnsen to the free skew field. Our goal is to extend these results even further to the universal skew field of fractions, $\scr{M} _d$, of $\scr{O} _d$. Note that by the result of Haagerup and Thorbj{\o}rnsen, the row tuple, $(X_1^{(n)}(\omega), \cdots, X_d^{(n)}(\omega))$ is, almost surely, in a row ball of bounded radius. We start with a simple lemma.

\begin{lem} \label{lem:strong_conv_entire}
   Assume that $k=1$. Let $f \in \scr{O} _d ^{k \times k}$, $k \in \N$. Then, for almost every $\omega \in \Omega$,
\[
\lim_{n \to \infty} \left\| f(X_1^{(n)}(\omega),\cdots,X_d^{(n)}(\omega)) \right\| = \|f(s_1,\cdots,s_d)\|.
\]
\end{lem}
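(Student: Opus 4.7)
The strategy is a standard approximation argument: given Haagerup--Thorbj{\o}rnsen's strong convergence for free polynomials, extend it to entire NC functions by uniformly approximating $f$ on a row ball of fixed radius that almost surely contains both the free semicircular tuple $s = (s_1, \ldots, s_d)$ and the random tuples $X^{(n)}(\omega) = (X_1^{(n)}(\omega), \ldots, X_d^{(n)}(\omega))$ for all large $n$.

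First I would fix a radius $R$ strictly larger than the row norm of $s$, so that $s \in R \cdot \rball$. Strong convergence applied to the matrix-valued polynomial $(\fz_1, \ldots, \fz_d)$ (equivalently, to the Gram polynomial of the random tuple) provides a full-measure set $\Omega_0 \subseteq \Omega$ outside of which both the row norm of $X^{(n)}(\omega)$ converges to that of $s$, and $\|p(X^{(n)}(\omega))\| \to \|p(s)\|$ for every $p$ in a countable dense subset of $\fp$. In particular, for each $\omega \in \Omega_0$ there is an $n_0(\omega)$ with $X^{(n)}(\omega) \in R \cdot \overline{\rball}$ for all $n \geq n_0(\omega)$.

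Next, fix $\omega \in \Omega_0$ and $\epsilon > 0$. Since $f \in \scr{O}_d$ has infinite Cauchy--Hadamard radius of convergence, by \cite[Theorem 1.1]{Pop-freeholo} the power series partial sums $f_N$ of $f$ converge to $f$ uniformly on $R \cdot \overline{\rball}$. Choose $N$ so that $\sup \{ \|f(Z) - f_N(Z)\| : Z \in R \cdot \overline{\rball} \} < \epsilon$, which yields $\|f(s) - f_N(s)\| < \epsilon$ and $\|f(X^{(n)}(\omega)) - f_N(X^{(n)}(\omega))\| < \epsilon$ for all $n \geq n_0(\omega)$. The reverse triangle inequality then gives
\[
\bigl|\|f(X^{(n)}(\omega))\| - \|f(s)\|\bigr| \leq 2\epsilon + \bigl|\|f_N(X^{(n)}(\omega))\| - \|f_N(s)\|\bigr|,
\]
and the last term vanishes as $n \to \infty$ by strong convergence applied to the polynomial $f_N$. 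Letting $\epsilon \to 0$ gives the claimed limit.

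The main obstacle is the uniform row-norm boundedness of $X^{(n)}(\omega)$; without it, the polynomial approximation of $f$ on $R \cdot \overline{\rball}$ cannot be transferred to operator-norm estimates at the random matrix tuples. This however is a standard consequence of strong convergence for the Gram polynomial, and once it is in hand the rest is a routine consequence of the presentation $\scr{O}_d = \varprojlim_m \A(r_m)$ as an inverse limit of disk algebras on row balls of increasing radius, combined with the uniform convergence of polynomial partial sums on each such ball.
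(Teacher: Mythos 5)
Your proposal is correct and follows essentially the same route as the paper: approximate $f$ by its free polynomial partial sums, which converge uniformly on row balls of finite radius by Popescu's theorem, invoke Haagerup--Thorbj{\o}rnsen for the polynomials, and conclude with a three-term triangle inequality, using the almost sure eventual boundedness of the row norms of the random tuples (which the paper also notes just before the lemma) to transfer the uniform approximation. The only cosmetic difference is that you extract a countable dense family of polynomials, whereas the cited form of Haagerup--Thorbj{\o}rnsen already supplies a single null set working for all free polynomials simultaneously.
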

\begin{proof}
Any such $f$ is given by a free FPS, 
\[
f(\fz ) = \sum_{k=0}^{\infty} \sum_{|\alpha|=k} \hat{f}_{\alpha} \, \fz ^{\alpha},
\]
with infinite radius of convergence.
Let $p_m = \sum_{k=0}^m \sum_{|\alpha|=k} \hat{f} _{\alpha} \fz ^{\alpha}$ be the $m$-th free polynomial partial sum of $f$. Then, this sequence of free polynomial partial sums, $p_m$, converges uniformly to $f$ on any NC row-ball of finite radii. By \cite[Theorem A]{Hup-realize2}, for almost all $\omega \in \Omega$ and all $m \in \N$, 
\[
\lim_{n\to \infty} \left\| p_m(X_1^{(n)}(\omega),\cdots,X_d^{(n)}(\omega)) \right\| = \|p_m(s_1,\cdots,s_m)\|.
\]
We see that 
\ba & & 
\left| \| f(X_1^{(n)}(\omega),\cdots,X_d^{(n)}(\omega)) \|  - \|f(s_1,\cdots,s_d)\| \right|  \leq  \left\| f(X_1^{(n)}(\omega),\cdots,X_d^{(n)}(\omega)) - p_m(X_1^{(n)}(\omega),\cdots,X_d^{(n)}(\omega)) \right\| \\ & & + \left| \| p_m(X_1^{(n)}(\omega),\cdots,X_d^{(n)}(\omega)) \| - \|p_m(s_1,\cdots,s_d)\| \right| + \|p_m(s_1,\cdots,s_d) - f(s_1,\cdots,s_d)\| \stackrel{m \rightarrow \infty}{\longrightarrow} 0.
\ea 
One argues similarly in the matrix-valued case.
\end{proof}

The following lemma is a slight modification of the corresponding result in \cite{Yin-strong_conv} to the setting at hand.
\begin{lem} \label{lem:invertible}
For every $k \in \N$ and every full matrix $F \in \scr{O}_d ^{k\times k}$, if $F(s_1,\cdots,s_d)$ is invertible, then for almost every $\omega \in \Omega$ for $n$ sufficiently large, $F(X_1^{(n)}(\omega),\cdots,X_d^{(n)}(\omega))$ is invertible.
\end{lem}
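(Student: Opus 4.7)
The plan is to reduce the entire matrix-valued statement to the (matrix-valued) polynomial case, where the strong convergence theorem of Haagerup--Thorbj{\o}rnsen \cite{Hup-realize2} and its matrix-coefficient extension of Haagerup--Schultz--Thorbj{\o}rnsen \cite{Hup-realize} supplies the needed one-sided Hausdorff convergence of spectra; invertibility then transfers to $F$ via a perturbation argument applied to $F(s)F(s)^*$. First I would introduce the \emph{formal adjoint} on $\scr{O}_d$: for $G = \sum_\omega \hat G_\omega \fz^\omega$, set
\[
G^\sharp(\fz) := \sum_\omega \overline{\hat G_\omega}\, \fz^{\omega^\mrt},
\]
which manifestly has the same Cauchy--Hadamard radius of convergence as $G$, so $G^\sharp \in \scr{O}_d$; extend $(\cdot)^\sharp$ to $\scr{O}_d^{k\times k}$ entrywise by $(G^\sharp)_{ij} := (G_{ji})^\sharp$. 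Since every $s_j$ and every $X_j^{(n)}(\omega)$ is self-adjoint, $G^\sharp$ evaluated at either tuple coincides with the $*$-adjoint of $G$'s evaluation. Setting $H := F \cdot F^\sharp \in \scr{O}_d^{k\times k}$, one checks $H^\sharp = H$ and $H(s) = F(s) F(s)^*$; since $F(s)$ is invertible, $H(s)$ is positive and invertible, so there exist $0 < \epsilon_0 \le M_0 < \infty$ with $\sigma(H(s)) \subseteq [\epsilon_0,M_0]$.

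Next I would truncate. Let $H_m := \sum_{|\omega|\le m} \hat H_\omega \fz^\omega \in \fp^{k\times k}$; because $H^\sharp = H$ forces $\hat H_\omega = (\hat H_{\omega^\mrt})^*$, each $H_m$ satisfies $H_m^\sharp = H_m$, so $H_m(s)$ and $H_m(X^{(n)}(\omega))$ are self-adjoint, and by analyticity $H_m \to H$ uniformly on any row-ball of finite radius. Fix $m_0$ with $\|H_{m_0}(s) - H(s)\| < \epsilon_0/4$, so that $\sigma(H_{m_0}(s)) \subseteq [3\epsilon_0/4, M_0 + \epsilon_0/4]$. The matrix-polynomial Haagerup--Thorbj{\o}rnsen theorem then yields that, almost surely, $\limsup_n \sigma(H_{m_0}(X^{(n)}(\omega))) \subseteq \sigma(H_{m_0}(s))$; in particular a.s.\ for all $n$ large, $\sigma(H_{m_0}(X^{(n)}(\omega))) \subseteq [\epsilon_0/2, M_0 + \epsilon_0/2]$. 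Separately, applying Haagerup--Thorbj{\o}rnsen to the scalar polynomial $\sum_j \fz_j^2 \in \fp$ shows that $\|\sum_j X_j^{(n)}(\omega)^2\|$ is a.s.\ eventually uniformly bounded, so the tuples $(X_1^{(n)}(\omega),\ldots,X_d^{(n)}(\omega))$ lie in a common row-ball of some finite radius $R$ a.s.\ for all $n$ large, and since $H$ is entire, $H_m \to H$ uniformly on this ball.

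Finally, I would fix $m \ge m_0$ large enough that $\|H(X) - H_m(X)\| < \epsilon_0/4$ for every $X$ in the row-ball of radius $R$. Combined with the preceding step, a.s.\ for $n$ sufficiently large, $\sigma(H(X^{(n)}(\omega))) \subseteq [\epsilon_0/4,\, M_0 + \epsilon_0]$, so $H(X^{(n)}(\omega)) = F(X^{(n)}(\omega)) F(X^{(n)}(\omega))^*$ is positive and invertible in $M_{kn}(\C)$. As $F(X^{(n)}(\omega))$ is a finite square matrix, invertibility of $F(X^{(n)}(\omega))F(X^{(n)}(\omega))^*$ forces $F(X^{(n)}(\omega))$ to be invertible. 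The main obstacle is that Lemma~\ref{lem:strong_conv_entire} records only norm convergence, which does not itself control the smallest singular value of $F(X^{(n)}(\omega))$; the truncation to the polynomial $H_{m_0}$ is the workaround, since only for matrix polynomials is the sharper one-sided spectral inclusion available. A secondary technical point is ensuring the approximation $H_m \to H$ is uniform along the random tuples, which is handled by the almost-sure common row-ball bound on $(X^{(n)}(\omega))$.
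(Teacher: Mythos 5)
Your argument is correct, up to one harmless bookkeeping point noted at the end, but it runs along a genuinely different track from the paper's. The paper makes the same opening move --- it passes to a positive element, working with $F^*F$ under the same transpose--conjugate involution you call $(\cdot)^\sharp$ --- but it never truncates to polynomials and never invokes the one-sided spectral inclusion of Haagerup--Thorbj{\o}rnsen. Instead, writing $T=F(s)$ and $T_n=F(X^{(n)}(\omega))$, it uses the elementary fact that a positive matrix $P$ with $\sigma(P)\subseteq[0,M]$ is invertible precisely when $\norm{MI-P}<M$, and observes that both $\norm{T_n}$ and $\normBig{\norm{T}^2I-T_n^*T_n}$ converge a.s.\ to the corresponding quantities at $s$, by Lemma \ref{lem:strong_conv_entire} applied to the entire matrix functions $F$ and $\norm{T}^2I-F^*F$; since $\normBig{\norm{T}^2I-T^*T}<\norm{T}^2$, a triangle inequality yields $\normBig{\norm{T_n}^2I-T_n^*T_n}<\norm{T_n}^2$ eventually, i.e.\ invertibility. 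This buys economy: the truncation, the a.s.\ common row-ball, and the uniform $\varepsilon$-budget that you set up by hand are exactly the ingredients already packaged inside the proof of Lemma \ref{lem:strong_conv_entire}, so only the norm-convergence form of the Haagerup--Thorbj{\o}rnsen theorem is ever needed. What your route buys in exchange is a quantitative conclusion --- you actually locate $\sigma(H(X^{(n)}(\omega)))$ inside $[\epsilon_0/4,\,M_0+\epsilon_0]$, which is slightly more than invertibility. The one repair: you prove the spectral inclusion for the truncation $H_{m_0}$ but the uniform $\epsilon_0/4$-approximation for $H_m$ with $m\ge m_0$; these combine only if they concern the same truncation, so fix a single $m$ large enough that $\norm{H_m(X)-H(X)}<\epsilon_0/4$ on a row-ball containing both $s$ and, a.s.\ eventually, the random tuples, and apply the spectral inclusion to that $H_m$. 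Note also that neither proof uses the fullness hypothesis on $F$; only the invertibility of $F(s_1,\cdots,s_d)$ enters.
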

\begin{proof}
There is a natural anti-holomorphic involution on the free algebra $\fp$, with respect to which the formal NC variables are self-adjoint (namely, $\fz_j^* = \fz_j$ for all $1 \leq j \leq d$).  The involution is given by transposing the monomials and taking the complex conjugate of the coefficients, 
$$  \sum _{\om \in \word} \hat{p} _\om \, \fz ^\om \quad \mapsto \quad \sum \ov{\hat{p} _{\om}} \, \fz ^{\om ^\mrt}. $$
This involution extends naturally to $\scr{O}_d$ and to matrices over $\scr{O}_d$. Let $F \in \scr{O}_d ^{k\times k}$, then $F^* F \in \scr{O}_d ^{k\times k}$ is self-adjoint. Moreover, for every tuple of self-adjoint elements $Y_1,\cdots,Y_d \in B(\cH)$, we have that $(F^* F)(Y_1,\cdots , Y_d) = F(Y_1,\cdots,Y_d)^* F(Y_1,\cdots,Y_d)$. To simplify notations, let $T = F(s_1,\cdots,s_d)$ and $T_n(\omega) = F(X_1^{(n)}(\omega),\cdots,X_d^{(n)}(\omega))$. We note that a self-adjoint operator $A \in B(\cH)$ is invertible if and only if $A^* A$ is invertible if and only if $\big\lVert \lVert A^2\rVert I - A^* A \big\rVert < \|A\|^2$. By the previous lemma $\|T_n(\omega)\| \to \|T\|$ almost surely. Moreover, almost surely
\[
    \lim_{n \to \infty} \normBig{\norm{T}^2 I - T_n(\omega)^* T_n(\omega)}
    = \normBig{\norm{T}^2 I - T^* T},
\]
and
\ba
    \normBig{\norm{T}^2 I - T_n(\omega)^* T_n(\omega)}
    & \leq & \Big\lvert \norm{T_n(\omega)}^2 - \norm{T}^2 \Big\rvert  + \normBig{ \lVert T\rVert^2 I - T_n(\omega)^* T_n(\omega)} \\ 
    & \stackrel{n \rightarrow \infty}{\longrightarrow} & \normBig{\norm{T}^2 I - T^* T} < \norm{T}^2. 
\ea
Therefore, for almost every $\omega$ and $n$ big enough, $\Norm{\norm{T_n(\omega)}^2 I - T_n(\omega)^* T_n(\omega)} < \norm{T_n}^2$ and $T_n$ is invertible.
\end{proof}

The following lemma is an immediate consequence of the fact that the group of invertible operators on a complex Hilbert space is operator--norm open and the inversion map is uniformly continuous on this group.
\begin{lem} \label{lem:rat_approx}
    Let $F \in \scr{O}_d ^{k\times k}$ be a full matrix. Let $Q_m \in \fp ^{k\times k}$ be the partial sums of the free FPS expansion of $F$ around the origin. Then for every $X = (X_1,\cdots,X_d) \in \scr{B} (\cH)^d$ so that $F(X)$ is invertible, $Q_m(X)$ is invertible for $m$ big enough. Moreover, for every such $X$, there exists $r > 0$, such that $Q_m^{-1} \to F^{-1}$ uniformly on the closed uniform NC row-ball of radius $r$ centered at $X$.
\end{lem}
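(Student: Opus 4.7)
The plan is to combine the uniform polynomial approximation of entire NC functions on bounded row-balls (Popescu's theorem, as already used throughout the paper) with the standard fact that the group of invertible operators is open in operator norm and that inversion is continuous there.

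First I would address the pointwise invertibility at $X$. Since $F \in \scr{O}_d^{k \times k}$, the partial Taylor sums $Q_m$ converge to $F$ uniformly on any uniform NC row-ball of finite radius centered at the origin. In particular, since $X = (X_1,\ldots,X_d)$ lies in such a ball, $\|Q_m(X) - F(X)\|_{\scr{B}(\cH) \otimes \C^{k \times k}} \to 0$. Because the invertibles in $\scr{B}(\cH) \otimes \C^{k \times k}$ form an operator-norm open set, invertibility of $F(X)$ implies invertibility of $Q_m(X)$ for all sufficiently large $m$, via a Neumann series.

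Next I would extend this to a uniform statement on a small uniform NC row-ball $\overline{\B^d_{\aleph_0 \cdot k}(X;r)}$ around $X$. By continuity of the uniformly analytic NC function $F$ on such a ball, one has a uniform bound $\sup_Y \|F(Y)\| \leq M$ for $Y$ in this ball. Shrinking $r$ if necessary, the identity
\[
F(Y) = F(X)\bigl(I + F(X)^{-1}(F(Y) - F(X))\bigr)
\]
together with $\|F(Y) - F(X)\| < \tfrac{1}{2\|F(X)^{-1}\|}$ (for $r$ small) shows that $F(Y)$ is invertible with $\|F(Y)^{-1}\| \leq 2\|F(X)^{-1}\|$ uniformly in $Y$. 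Since $\overline{\B^d_{\aleph_0 \cdot k}(X;r)}$ is contained in a finite-radius NC row-ball centered at $0$, Popescu's uniform convergence gives $\sup_Y \|Q_m(Y) - F(Y)\| \to 0$. Hence for all large $m$, $Q_m(Y)$ is also uniformly invertible on this ball with uniformly bounded inverse.

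Finally, to conclude uniform convergence of inverses, I would use the resolvent-type identity
\[
Q_m(Y)^{-1} - F(Y)^{-1} = Q_m(Y)^{-1}\bigl(F(Y) - Q_m(Y)\bigr)F(Y)^{-1}.
\]
Taking operator norms and using the uniform bounds on $\|Q_m(Y)^{-1}\|$ and $\|F(Y)^{-1}\|$ together with $\sup_Y \|Q_m(Y) - F(Y)\| \to 0$ yields $\sup_Y \|Q_m(Y)^{-1} - F(Y)^{-1}\| \to 0$, as required. The only mildly delicate point is choosing $r$ small enough that the uniform bound on $\|F(Y)^{-1}\|$ holds; once this is arranged, the rest is a routine two-line calculation. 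Note that the hypothesis that $F$ be \emph{full} is not actually needed for this lemma — only that $F(X)$ be invertible in $\scr{B}(\cH) \otimes \C^{k \times k}$.
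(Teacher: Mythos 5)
Your argument is correct and is exactly the route the paper intends: the paper offers no written proof, merely remarking that the lemma is "an immediate consequence" of the operator-norm openness of the invertibles and the uniform continuity of inversion, which is precisely what you have spelled out via Popescu's uniform convergence of the partial sums on finite-radius row-balls and the resolvent identity. Your side remark that fullness of $F$ is not needed here is also accurate; it is carried along only because of how the lemma is used in Theorem \ref{thm:meromorphic_hagthor}.
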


It follows immediately that if $f$ is an NC rational expression in $\scr{M}_d$, then there exists a sequence of rational expressions $\fr _m$, such that $\fr _m \to f$ uniformly on uniform balls centered at points in the domain of the rational expression. Indeed, we can approximate $b$, $c$, and $d$ uniformly on subballs by polynomial vectors and the previous lemma takes care of the rest.

\begin{thm} \label{thm:meromorphic_hagthor}
Let $f \in \scr{M}_d$ and assume that $(s_1,\cdots,s_d)$ are in the domain of $f$. Then, for almost every $\omega \in \Omega$,
\[
\lim_{n \to \infty} \left\| f(X_1^{(n)}(\omega),\cdots,X_d^{(n)}(\omega)) \right\| = \|f(s_1,\cdots,s_d)\|.
\]
\end{thm}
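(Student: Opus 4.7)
The plan is to reduce the meromorphic case to the rational case handled by Yin, via polynomial truncation of the entire components in a formal representation of $f$. Fix such a representation $f \sim (A,b,c;\vec h)$ with $\vec h \in \scr{O}_d^{1\times k}$, so that $M := A(\vec h(\fz)) \in \scr{O}_d^{n\times n}$ is a full matrix with $M(s) := A(\vec h(s_1,\ldots,s_d))$ invertible and $f(s) = b^* M(s)^{-1} c$. First, I would apply Lemma \ref{lem:invertible} to $M$: since $M \in \scr{O}_d^{n\times n}$ and $M(s)$ is invertible, almost surely $M(X_1^{(n)}(\omega),\ldots,X_d^{(n)}(\omega))$ is invertible for $n$ large, so $f(X^{(n)}(\omega)) = b^* M(X^{(n)}(\omega))^{-1} c$ is well-defined, establishing the first assertion of the theorem.

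Next I would approximate $M$ by the free polynomial partial sums $Q_m \in \fp^{n\times n}$ of its Taylor--Taylor series at the origin. Since $M$ is entire, $Q_m \to M$ uniformly in operator norm on every NC row-ball $R\cdot \overline{\rball}$ of finite radius. By Lemma \ref{lem:rat_approx}, for $m$ sufficiently large $Q_m(s)$ is invertible and $Q_m(s)^{-1} \to M(s)^{-1}$ in norm; setting $\fr_m := b^* Q_m^{-1} c$ then gives an NC rational expression with $(s_1,\ldots,s_d)$ in its domain and $\fr_m(s) \to f(s)$ in norm. By Yin's strong-convergence theorem for NC rational expressions (together with the matrix-valued Haagerup--Schultz--Thorbj{\o}rnsen theorem), for each such $m$, almost surely $\|\fr_m(X^{(n)}(\omega))\| \to \|\fr_m(s)\|$ and $\|Q_m(X^{(n)}(\omega))^{-1}\| \to \|Q_m(s)^{-1}\|$ as $n \to \infty$, so in particular $\|Q_m(X^{(n)}(\omega))^{-1}\|$ is uniformly bounded in large $n$. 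Applying Haagerup--Thorbj{\o}rnsen to $\sum_j X_j^{(n)} X_j^{(n)*}$ gives that almost surely the row norm of $X^{(n)}(\omega)$ is bounded uniformly in $n$ by some $R = R(\omega)$, so that $Q_m \to M$ uniformly at all the evaluations in sight.

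The proof would then be concluded by the triangle inequality
\[
\bigl| \|f(X^{(n)}(\omega))\| - \|f(s)\| \bigr| \leq \|f(X^{(n)}(\omega)) - \fr_m(X^{(n)}(\omega))\| + \bigl| \|\fr_m(X^{(n)}(\omega))\| - \|\fr_m(s)\| \bigr| + \|\fr_m(s) - f(s)\|,
\]
after intersecting the countably many almost-sure events indexed by $m$ and carrying out a standard $\varepsilon$ diagonal argument. The second and third summands on the right are controlled for each fixed large $m$ by the preceding step, so everything hinges on the first summand. Writing $M^{-1} - Q_m^{-1} = Q_m^{-1}(Q_m - M) M^{-1}$ and taking operator norms, the first summand is bounded by $\|b\|\|c\| \cdot \|Q_m(X^{(n)})^{-1}\| \cdot \|(Q_m - M)(X^{(n)})\| \cdot \|M(X^{(n)})^{-1}\|$, which goes to zero with $m$ thanks to uniform convergence $Q_m \to M$ on $R \cdot \overline{\rball}$, provided the inverse factors are uniformly controlled.

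The main obstacle is precisely the uniform control of $\|M(X^{(n)}(\omega))^{-1}\|$ in $n$ large, almost surely: without such a bound the resolvent estimate above is vacuous. The proposed workaround is a bootstrap: choose $m$ large enough that $\|Q_m - M\| \cdot \|Q_m(s)^{-1}\| < 1/2$ uniformly on $R\cdot \overline{\rball}$, use Yin to transfer this to $\|Q_m - M\|(X^{(n)}) \cdot \|Q_m(X^{(n)})^{-1}\| < 1/2$ for $n$ large almost surely, and then invoke a Neumann series to bound $\|M(X^{(n)})^{-1}\| \leq 2\|Q_m(X^{(n)})^{-1}\|$, which closes the estimate.
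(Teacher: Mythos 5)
Your proposal is correct and follows essentially the same route as the paper: fix a formal representation $f = b^* A(\vec h)^{-1} c$, use Lemma \ref{lem:invertible} for almost-sure invertibility at the random matrices, approximate by the rational expressions $\fr_m = b^* Q_m^{-1} c$ obtained from polynomial truncations (Lemma \ref{lem:rat_approx}), apply Yin's strong-convergence theorem to each $\fr_m$, and finish with the triangle inequality and a diagonal argument over the countably many almost-sure events. Your explicit Neumann-series bootstrap giving the uniform bound on $\| A(\vec h(X^{(n)}(\omega)))^{-1}\|$ for large $n$ makes precise a point the paper's proof treats implicitly through its appeal to uniform convergence of $\fr_m \to f$ on uniform balls about points of the domain (equivalently, through the quantitative estimate inside the proof of Lemma \ref{lem:invertible}).
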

\begin{proof}
By Theorem \ref{thm:stabeval} and its preceding discussion, %\ref{thm:stable_eval}
%we can choose a representation of $f = d- b^t A^{-1} c$, such that $d \in \scr{O}_d$, $b, c \in \scr{O}_d ^k$, $A \in \scr{O}_d ^{k \times k}$ is full, and $A(s_1,\cdots,s_d)$ is invertible.
we can choose a representation of $f = b^* A^{-1} c$, such that $b, c \in \C^k$, $A \in \scr{O}_d ^{k \times k}$ is full, and $A(s_1,\cdots,s_d)$ is invertible. By Lemma \ref{lem:invertible}, $A(X_1^{(n)}(\omega),\cdots,X_d^{(n)}(\omega))$ is invertible, for almost every $\omega$ and $n$ big enough. Now fix $\omega \in \Omega \setminus \Xi$, where $\Xi$ is the measure $0$ set that we must remove to ensure convergence. By the observation preceding the statement of the theorem, there exists a sequence of rationals $\fr _m$ that converges to $f$ uniformly on uniform balls centered at points in the domain of $f$. It remains to perform a calculation. For $m$ big enough, the following inequality makes sense
%\ba 
%&& \Abs{ \norm{f(s_1,\cdots,s_d)} - \norm{f(X_1^{(n)}(\omega),\cdots,X_d^{(n)}(\omega))} } \leq \norm{f(s_1,\cdots,s_d) - \fr_m (s_1,\cdots,s_d)} + \\  & &  \Abs{\|\fr _m (s_1,\cdots,s_d) \| - \|\fr_m (X_1^{(n)}(\omega),\cdots,X_d^{(n)}(\omega))\| } + \|\fr _m(X_1^{(n)}(\omega),\cdots,X_d^{(n)}(\omega)) - f(X_1^{(n)}(\omega),\cdots,X_d^{(n)}(\omega))\|. 
%\ea
\begin{align*} 
\Abs{ \norm{f(s_1,\cdots,s_d)} - \norm{f(X_1^{(n)}(\omega),\cdots,X_d^{(n)}(\omega))} } 
    & \leq \norm{f(s_1,\cdots,s_d) - \fr_m (s_1,\cdots,s_d)} \\ 
    &+ \Abs{\norm{\fr _m (s_1,\cdots,s_d) } - \norm{\fr_m (X_1^{(n)}(\omega),\cdots,X_d^{(n)}(\omega))} } \\
    &+ \norm{\fr _m(X_1^{(n)}(\omega),\cdots,X_d^{(n)}(\omega)) - f(X_1^{(n)}(\omega),\cdots,X_d^{(n)}(\omega))}. 
\end{align*}
Given $\varepsilon > 0$, we can choose $m$ big enough so that
\[
    \Abs{ \|f(s_1,\cdots,s_d)\| - \|f(X_1^{(n)}(\omega),\cdots,X_d^{(n)}(\omega))\| }  
    \leq \varepsilon + \Abs{ \norm{\fr _m(s_1,\cdots,s_d)} - \norm{\fr _m(X_1^{(n)}(\omega),\cdots,X_d^{(n)}(\omega))} }.
\]
By the result of Yin \cite{Yin-strong_conv}, letting $n$ tend to infinity, we obtain 
\[
\limsup_{n\to \infty} \Abs{\norm{f(s_1,\cdots,s_d)} - \norm{f(X_1^{(n)}(\omega),\cdots,X_d^{(n)}(\omega))}} \leq \varepsilon.
\]
Since this holds for every $\varepsilon > 0$, the theorem is proven.
\end{proof}

As in \cite{Yin-strong_conv}, this has the immediate consequence:
\begin{cor}
Given any $f \in \scr{M}_d$ so that $(s_1,\cdots,s_d)$ are in the domain of $f$, 
\[
\lim_{n\to \infty} \mathbb{E} \left( \frac{1}{n} \tr \, f (X_1^{(n)},\cdots,X_d^{(n)}) \right) = \tau \left( f(s_1,\cdots,s_n) \right).
\]
\end{cor}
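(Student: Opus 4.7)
The plan is to bootstrap from Theorem~\ref{thm:meromorphic_hagthor} (almost-sure convergence of operator norms of meromorphic expressions) together with Yin's expectation statement for NC rational expressions (recorded in the paragraph preceding Lemma~\ref{lem:strong_conv_entire}) via a triangle-inequality argument that mirrors the proof of Theorem~\ref{thm:meromorphic_hagthor}, but with the operator norm replaced throughout by $\tfrac{1}{n}\tr(\cdot)$ and the pointwise estimate $|\tfrac{1}{n}\tr(A)| \leq \|A\|$ serving as the comparison.

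Fix $\varepsilon > 0$. Using Lemma~\ref{lem:rat_approx} and the observation immediately following it, I would produce an NC rational expression $\fr \in \fskew$ such that $(s_1,\ldots,s_d)$ lies in the domain of $\fr$, with $\|\fr(s) - f(s)\| < \varepsilon$ and $\|\fr(Y) - f(Y)\| < \varepsilon$ uniformly for $Y$ in some closed uniform NC row-ball $B$ of positive radius centered at $(s_1,\ldots,s_d)$. I then split
\begin{align*}
\Abs{\mathbb{E}\bigl(\tfrac{1}{n}\tr f(X^{(n)})\bigr) - \tau(f(s))}
&\leq \mathbb{E}\Abs{\tfrac{1}{n}\tr f(X^{(n)}) - \tfrac{1}{n}\tr \fr(X^{(n)})} \\
&\quad + \Abs{\mathbb{E}\bigl(\tfrac{1}{n}\tr \fr(X^{(n)})\bigr) - \tau(\fr(s))} \\
&\quad + \Abs{\tau(\fr(s)) - \tau(f(s))}.
\end{align*}
The third summand is at most $\|\fr(s) - f(s)\| < \varepsilon$, and the second tends to $0$ as $n \to \infty$ by Yin's rational expectation result. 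For the first summand, the pointwise bound by $\|f(X^{(n)}) - \fr(X^{(n)})\|$ allows us to split the expectation on $\{X^{(n)} \in B\}$ and its complement: on the good event, uniform approximation yields a bound of $\varepsilon$, while the probability of the complement tends to $0$ by the strong convergence underlying Theorem~\ref{thm:meromorphic_hagthor}.

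The main obstacle will be showing that the contribution from the bad event, $\mathbb{E}\bigl(\|f(X^{(n)}) - \fr(X^{(n)})\| \, \mathbf{1}_{\{X^{(n)} \notin B\}}\bigr)$, actually tends to $0$, since the almost-sure boundedness of $\|f(X^{(n)})\|$ from Theorem~\ref{thm:meromorphic_hagthor} is not by itself an integrable dominant. I would address this by combining Gaussian concentration for the operator norm of the Ginibre-type tuple $X^{(n)}$ (giving exponentially small tails for $\|X^{(n)}\|_{\mathrm{row}}$ beyond the free-semicircular spectral radius) with a quantitative invertibility estimate extending Lemma~\ref{lem:invertible}: for a formal realization $f \sim (A,b,c;\vec{h})$ and $Y$ in a row ball of radius $M$, the norm $\|f(Y)\|$ can be controlled by a polynomial in $M$ divided by a power of the smallest singular value of $A(\vec{h}(Y))$, which in turn is bounded below on the event where $X^{(n)}$ is moderately close to the semicircular tuple. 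The exponential tail then dominates any such polynomial growth, yielding the required uniform integrability. Sending $n \to \infty$ and then $\varepsilon \to 0$ completes the argument.
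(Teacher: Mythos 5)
The paper itself supplies no argument here: the corollary is asserted as an ``immediate consequence'' of Theorem \ref{thm:meromorphic_hagthor}, with the passage from almost-sure norm convergence to convergence of expected normalized traces delegated to the corresponding step in Yin's paper. Your three-term decomposition (approximate $f$ by rationals $\fr$, invoke Yin's expectation statement for the middle term, bound the outer terms by $\abs{\tfrac{1}{n}\tr(\cdot)} \leq \norm{\cdot}$) is the natural way to flesh this out and mirrors the proof of Theorem \ref{thm:meromorphic_hagthor}; the second and third summands are handled correctly.

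However, your resolution of the ``bad event'' term does not close the gap you correctly identified. You propose to bound $\|f(Y)\|$ by a polynomial in $\|Y\|_{\mathrm{row}}$ divided by a power of $\sigma_{\min}\bigl(A(\vec{h}(Y))\bigr)$, and then to bound that singular value from below ``on the event where $X^{(n)}$ is moderately close to the semicircular tuple.'' But the bad event is precisely the complement of such an event: there, no lower bound on $\sigma_{\min}\bigl(A(\vec{h}(X^{(n)}))\bigr)$ is available, the pencil may be arbitrarily close to singular (indeed singular, in which case $f(X^{(n)})$ is not even defined), and $\|f(X^{(n)})\|$ is not controlled by any function of $\|X^{(n)}\|_{\mathrm{row}}$ alone. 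Consequently the Gaussian tail bound on $\|X^{(n)}\|_{\mathrm{row}}$ cannot dominate the integrand on the bad event, and the claimed uniform integrability does not follow. Note also that, for each fixed $n$, the set where $f(X^{(n)})$ is undefined has positive probability, so the expectation in the statement is not a priori well-defined; the standard remedy (implicit in Yin and in the paper) is to interpret the expectation as restricted to a good event such as $\{X^{(n)} \in B\}$, whose probability tends to $1$ exponentially fast by Gaussian concentration for $\|X^{(n)}\|_{\mathrm{row}}$ and the a.s. invertibility from Lemma \ref{lem:invertible}. With that convention the problematic term is removed by definition, the integrand on the retained event is uniformly bounded by the uniform approximation on $B$, and the rest of your argument goes through. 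As written, though, the concentration-plus-singular-value step would fail.
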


\section{Outlook} \label{outlook}

We leave several interesting questions open. One natural question arises from the fact that the Fr\'echet topology on $\scr{O}_d$, viewed as an inverse limit of NC disk algebras, is independent of the chosen operator space structure on $\C^d$. Here, we have chosen to work with the row operator space structure over $\C ^d$ to exploit non-commutative Hardy space techniques, since the free Hardy space can be viewed as a Hilbert space of uniformly analytic NC functions on the NC unit row-ball centred at $0 \in \C ^{1\times d}$.  However, our methods depend crucially on the `row operator space structure' that is implicit in the definition of Popescu's NC disk algebra as NC functions in the NC unit row-ball. (The column operator space structure for $\C ^d$ would work equally well.) If $\cE$ is an operator space structure on $\C^d$, then we denote by $\B_{\cE} ^d$ the NC set that is the open unit ball of $\cE$. The algebras $\bH^{\infty}(\B_{\cE} ^d )$ and $\A(\cE)$ were studied by Sampat and Shalit \cite{SampShal-classification,SampShal-structure}. One can ask whether the algebras $\scr{O}(R \cdot \B_{\cE} ^d)$ given by NC functions that are uniformly bounded on $r \cdot \overline{\B_{\cE}^d}$ for every $0 < r < R$ are semifirs that satisfy an analogue of the analytic Bergman Nullstellensatz. A second natural question is whether every closed right (respectively, left) ideal in $\scr{O}_d$ is free as a right (respectively, left) $\scr{O}_d$-module in the sense of completed tensor products as in \cite{Taylor2}. 

As described in Section \ref{sec:usfield}, one of the main problems we leave open is whether elements of the universal skew fields, $\scr{M} _d (R)$, of ``NC meromorphic expressions", can actually be identified with bona fide non-commutative functions. Namely, we have two natural questions. %Is the natural embedding $\scr{O} _d (R) \hookrightarrow \scr{O} _d (r)$ for $r<R \in [0, +\infty]$, where $\scr{O} _d (0)$ is the semifir of uniformly analytic NC germs at $0$ (totally inert, hence) honest? And secondly, do there exist non-trival ``meromorphic identies" in $\scr{M} _d (R)$, $R \in (0, +\infty]$, \emph{i.e.} non-zero elements of $\scr{M} _d (R)$ which vanish on their domains in $\ncu$, or are undefined on any uniformly open subset of $R \cdot \rball$?
\begin{quest}
    Recall that $\scr{O} _d (0)$ is the semifir of uniformly analytic NC germs at $0$. 
    Is the natural embedding $\scr{O} _d (R) \hookrightarrow \scr{O} _d (r)$ for $r<R \in [0, +\infty]$ (totally inert, hence) honest?   
\end{quest}

\begin{quest}
    Do there exist non-trival ``meromorphic identies" in $\scr{M} _d (R)$, $R \in (0, +\infty]$, \emph{i.e.} non-zero elements of $\scr{M} _d (R)$ which vanish on their domains in $\ncu$, or are undefined on any uniformly open subset of $R \cdot \rball$?
\end{quest}

In \cite{AugMarS}, the authors have introduced a second skew field of NC meromorphic functions, $\scr{M} _{d} ^\scr{C}$. This is the universal skew field of fractions of the semifir, $\scr{O} _{d} ^\scr{C}$, of all NC functions that admit a compact realization, $(A,b,c) \in \scr{C} (\cH) ^d \times \cH \times \cH$, where $\cH$ is a complex, separable Hilbert space and $\scr{C} (\cH)$ denotes the compact linear operators on $\cH$. That is, given any $h \in \scr{O} _d ^\scr{C}$, given by the compact realization $(A,b,c)$, and any $Z \in \cdn$ for which the monic, affine--linear pencil, $L_A (Z) := I_n \otimes I_\cH - \sum _{j=1} ^d Z_j \otimes A_j$, is invertible, 
$$ h(Z) = I_n \otimes b^* L_A (Z) ^{-1} I_n \otimes c. $$ By \cite[Theorem 4.5 and Corollary 4.4]{AugMarS}, any free formal power series, $f \in \fpsd$, is a uniformly entire NC function in $\scr{O} _d$ if and only if it has a compact and jointly quasinilpotent realization, $(A,b,c)$. That is, each component operator, $A_j \in \scr{C} (\cH)$ is compact and the completely positive map, $\mr{Ad} _{A,A^*} : \scr{B} (\cH) \rightarrow \scr{B} (\cH)$, of adjunction by the $A_j$ and $A_j ^*$, is quasinilpotent as an element of the unital Banach algebra, $\scr{B} (X)$, where $X:= \scr{B} (\cH)$. In particular, $\scr{O} _d \subsetneqq \scr{O} _d ^\scr{C}$. It then follows from the standard realization algorithm for sums, products and inverses of NC functions with realizations that a univariate formal power series is the Taylor series of a globally meromorphic function (which is analytic at $0$) if and only if it has a compact realization \cite[Theorem 5.1 and Theorem 5.3]{AugMarS}. This motivates the interpretation of $\scr{O} _d ^\scr{C}$, the semifir of all NC functions with compact realizations, as the ring of globally meromorphic NC functions with analytic germs at $0$, as well as the interpretation of its universal skew field, $\scr{M} _d ^\scr{C}$ as the skew field of globally meromorphic NC functions. Here, elements of $\scr{M} _d ^\scr{C}$ can indeed be identified with bona fide uniformly analytic NC functions whose domains are uniformly open subsets that are level-wise analytic-Zariski dense at all sufficiently high levels in the NC universe, see \cite[Section 6.1 and Theorem 6.4]{AugMarS}.

In one-variable, it is not difficult to see, from the results of \cite{AugMarS}, that these two definitions of the field of globally meromorphic functions are the same, \emph{i.e.} that $\scr{M} _1 = \scr{M} _1 ^\scr{C}$. Therefore, it would be very interesting to know whether these two skew fields of ``NC meromorphic functions", $\scr{M} _d$ and $\scr{M} _d ^\scr{C}$ are actually the same field for any $d \in \N$. Since, as discussed above, any $h \in \scr{O} _d$ has a compact and jointly quasinilpotent realization, we have an embedding $\scr{O} _d \hookrightarrow \scr{O} _d  ^\scr{C}$.  If it could be proven that this embedding, or the embedding $\scr{O} _d  \hookrightarrow \fps$ is honest, it would follow that $\scr{M} _d \hookrightarrow \scr{M} _d ^\scr{C}$, so that $\scr{M} _d$ could be identified with a skew sub-field of $\scr{M} _d ^\scr{C}$. A special case of a classical question posed by Henri Poincar\'e, and ultimately answered, in the affirmative by the positive solution to the Cousin problems, is whether any globally meromorphic function in $\C ^d$ is a ratio of entire functions \cite{Maurin}. %This question of whether $\scr{M} _d = \scr{M} _d ^\scr{C}$ can therefore be viewed as an ``NC Poincar\'e problem". That is, is any uniformly meromorphic NC function in $\scr{M} _d ^\scr{C}$ equal to an NC rational expression in NC entire functions? 
Thus, we arrive at an ``NC Poincar{\'e} problem":
\begin{quest}
    For any $d\in \N$, is $\scr{M} _d = \scr{M} _d ^\scr{C}$?
    That is, is any uniformly meromorphic NC function in $\scr{M}_d ^\scr{C}$ equal to an NC rational expression in NC entire functions?
\end{quest}

\setstretch{1}
\setlength{\parskip}{0pt}
\setlength{\itemsep}{0pt}

\bibliographystyle{abbrv}
\bibliography{entire}

\end{document}